\newtheorem{thm}{Theorem}[section]
\newtheorem{lem}[thm]{Lemma}
\newtheorem{prop}[thm]{Proposition}
\newtheorem{cor}[thm]{Corollary}
\theoremstyle{remark}
\def\A{\mathcal{A}}
\def\B{\mathcal{B}}
\def\cS{\mathcal{S}}
\def\L{\mathcal{L}}
\def\N{\mathcal{N}}
\def\O{\mathcal{O}}
\def\P{\mathbf{P}}
\def\r{_R}
\def\Z{\mathbb{Z}}
\def\trX{X^{\circ}}
\def\trA{\A^{\circ}}
\DeclareMathOperator{\aut}{Aut}
\DeclareMathOperator{\cay}{Cay}
\DeclareMathOperator{\iso}{Iso}
\DeclareMathOperator{\orb}{Orb}
\DeclareMathOperator{\rad}{rad}
\DeclareMathOperator{\rk}{rk}
\DeclareMathOperator{\Span}{Span}
\DeclareMathOperator{\Sup}{Sup}
\DeclareMathOperator{\sym}{Sym}
\DeclareMathOperator{\cyc}{Cyc}
\begin{document}
\title[CI-property of $C_p^2 \times C_n$ and $C_p^2 \times C_q^2$ for digraphs]
{CI-property of $\boldsymbol{C_p^2 \times C_n}$ and
$\boldsymbol{C_p^2 \times C_q^2}$ for digraphs}
%1
\author[I.~Kov\'acs]{Istv\'an~Kov\'acs$^{\, 1,2,3}$}
\address{I.~Kov\'acs 
\newline\indent
UP IAM, University of Primorska, Muzejski trg 2, SI-6000 Koper, Slovenia 
\newline\indent
UP FAMNIT, University of Primorska, Glagol\v jaska ulica 8, SI-6000 Koper, Slovenia}
\email{istvan.kovacs@upr.si}
%2
\author[M.~Muzychuk]{Mikhail~Muzychuk}
\address{M.~Muzychuk
\newline\indent
Department of Mathematics, Ben Gurion University of the Negev, 
84105 Beer Sheva, Israel}
\email{muzychuk@bgu.ac.il}
%3
\author[P.~P.~P\'alfy]{P\'eter~P.~P\'alfy$^{\, 2}$}
\address{P.~P.~P\'alfy
\newline\indent 
Alfr\'ed R\'enyi Institute of Mathematics, 
Re\'altanoda utca 13-15, H-1053 Budapest, Hungary}
\email{ppp@renyi.hu}
%4
\author[G.~Ryabov]{Grigory~Ryabov$^{\, 3, 4}$}
\address{G.~Ryabov
\newline\indent  
Sobolev Institute of Mathematics, Acad. Koptyug avenue 4, 630090, Novosibirsk, Russia 
\newline\indent 
Novosibirsk State University, Pirogova str. 1, 630090, Novosibirsk, Russia
\newline\indent  
Novosibirsk State Technical University, K. Marksa avenue 20, 630073, Novosibirsk, Russia}
\email{gric2ryabov@gmail.com}
%5
\author[G.~Somlai]{G\'abor~Somlai$^{\, 2, 5}$}
\address{G.~Somlai
\newline\indent
E\"otv\"os Lor\'and University, Department of Algebra and Number Theory, 
P\'azm\'any P\'eter S\'et\'any 1/C, H-1117 Budapest, Hungary} 
\email{gabor.somlai@ttk.elte.hu}
% support
\thanks{$^1$~Supported by the Slovenian Research Agency 
(research program P1-0285, research projects N1-0062, J1-9108, J1-1695, J1-2451 and N1-0208).
\newline\indent
$^2$~Supported by the ARRS-NKFIH Slovenian-Hungarian Joint Research Project, 
grant no.~SNN 132625 (in Hungary) and N1-0140 (in Slovenia). 
\newline\indent
$^3$~Supported by the Slovenian Research Agency (bilateral project BI-RU/19-20-032).
\newline\indent
$^4$~Supported by the Mathematical Center in Akademgorodok under the agreement No. 075-15-2019-1613 with the Ministry of Science and Higher Education of the Russian
Federation.
\newline\indent
$^5$~Supported by the J\'anos Bolyai Research Fellowship and by
the New National Excellence Program under the grant number UNKP-20-5-ELTE-231.   
}
\keywords{Cayley graph, CI-property, Schur ring}
\subjclass[2010]{05C25, 05C60, 20B25}
%--------------------------------------------------------------------------------------------------------------%
\maketitle
\begin{abstract}
We prove that the direct product of two coprime order elementary abelian groups of rank two, as well as the direct product of a cyclic group of prime order and a cyclic
group of square free order are DCI-groups. The latter is a generalization of Muzychuk's result on cyclic groups (J.~Combin.~Theory Ser.~A, 1995).
\end{abstract}

\section{Introduction}\label{sec:intro}
Investigation of the isomorphism problem of Cayley graphs started in 1967 with the following conjecture
of \'Ad\'am~\cite{adam}. He asked whether two circulant graphs on $n$ vertices are isomorphic if and only if they are isomorphic via a multiplication with an integer coprime to $n$.

A generalisation of the question using a different terminology was introduced in  \cite{babaifrankl}. Let $G$ be a finite group and let $S$ be a subset of $G \setminus \{ e \}=G^\#$. The vertices of the \emph{Cayley graph} $\cay(G,S)$ are the elements of $G$ and $g \in G$ is connected to $h \in G$ if and only if $hg^{-1} \in S$. 
A right multiplication by a group element $g \in G$ is an automorphism of an arbitrary Cayley graph and hence $\aut(\cay(G,S))$ contains a regular subgroup isomorphic to $G$.

Any automorphism $\alpha$ of $G$ induces an isomorphism between the two Cayley graphs $\cay(G, S)$ and $\cay(G,S^{\alpha})$. In this case these graphs are called \emph{Cayley isomorphic}. A subset $S$ of the group $G$ is \emph{CI} if
$\cay(G,S) \cong \cay(G,T)$ implies that these graphs are Cayley isomorphic. A group $G$ is called a \emph{DCI-group} if
$S$ is CI for every  $S \subset G^\#$ and it is called a \emph{CI-group} if the same holds for symmetric ($S^{-1}=S$) subsets of  $G^\#$.

The first counterexample for \'Ad\'am's conjecture was given by Elspas and Turner~\cite{ET}, and independently by Djokovi\v{c}~\cite{D}. The complete description of finite cyclic DCI-groups was given by Muzychuk~\cite{M2} in 1997, who proved that a cyclic group $C_n$ is DCI if and only if $n=ab$, where $a \mid 4$ and $b$ is a square free odd number.

The class of CI-groups is closed under taking subgroups. It was proved by Babai and 
Frankl~\cite{babaifrankl} that a finite $p$-group is a DCI-group only if it is either an elementary abelian $p$-group or a quaternion group of order $8$ or a cyclic group of small order. This poses a strong restriction on the structure of DCI-groups. A
collection of the candidates of (D)CI-groups is found in \cite{LLP}. 
Recently, further significant restriction was obtained by Dobson et al.~\cite{DMS}.
Furthermore, it has been proved by Muzychuk~\cite{M0} that for every prime $p$, the elementary abelian $p$-groups of sufficiently large rank are not CI-groups. The current lower bound for the rank of a non-CI elementary abelian $p$-group is 
$2p+3$ \cite{Som}. On the other hand, it was proved by Feng and Kov\'acs~\cite{FK} that $C_p^5$ is CI-group for every prime $p$.

It was conjectured by Kov\'acs and Muzychuk~\cite{KM} that the direct product of 
DCI-groups of coprime orders is always a DCI-group 
(see also \cite[Conjecture~43]{Dob}). They proved that $C_p^2 \times C_q$ is a 
DCI-group for every pair of distinct primes $p$, $q$. As a strengthening of this result it was proved that $C_p^3 \times C_q$ and $C_p^4 \times C_q$ are also DCI-groups, 
see \cite{SM,KR2}. Furthermore, Dobson~\cite{Dob} settled the conjecture for 
abelian groups under strong restrictions on the order of the factors.

It was shown by Babai~\cite{B} that $S \subseteq G^\#$ is CI if and only if all regular subgroups of $\aut(\cay(G,S))$ isomorphic to $G$ are conjugate in the automorphism group. This observation gives us one of the main tools in the study of (D)CI-groups and allows us to use results from group theory. Another basic method in these investigations is the use of Schur rings. It started in the paper of Klin and P\"oschel~\cite{KP}, where it was proved that a cyclic group whose order is a product of two different primes is a DCI-group. The method was further developed in a paper of Hirasaka and Muzychuk~\cite{HM}, where the notion of star product was introduced. 
We refer to the survey paper \cite{MP} for more information on Schur 
rings and their link with combinatorics.

In our paper the techniques developed in \cite{SM} will be combined with a criterion given in \cite{KR} to lead to our results.
\begin{thm}\label{main1}
For any prime $p$ and any square free number $n$ the group $C_p \times C_n$ is a 
DCI-group.
\end{thm}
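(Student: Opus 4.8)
The plan is to combine Babai's reduction with the theory of Schur rings (S-rings). By Babai's criterion~\cite{B}, $G := C_p \times C_n$ is a DCI-group exactly when, for every Cayley digraph $X = \cay(G,S)$, all regular subgroups of $\aut(X)$ isomorphic to $G$ are conjugate in $\aut(X)$. Attaching to $X$ the S-ring $\A$ over $G$ whose basic sets are the orbits of the point stabiliser $\aut(X)_e$ on $G$, one has $\aut(X) = \aut(\A)$, so it suffices to prove that every S-ring $\A$ over $G$ is a \emph{CI-S-ring}, meaning that the right regular subgroup $G_R$ and any regular $K \cong G$ in $\aut(\A)$ lie in one $\aut(\A)$-class. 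Thus I would fix an arbitrary S-ring $\A$ over $G$ and show it is CI, arguing by induction on $|G|$.

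The case $p \nmid n$ is immediate: then $G \cong C_{pn}$ is cyclic of square free order, and the statement is Muzychuk's classification of cyclic DCI-groups~\cite{M2}. So the essential case is $p \mid n$, giving $G \cong C_p^2 \times C_m$ with $m = n/p$ square free and coprime to $p$. Writing $G = U \times W$ with $U = C_p^2$, $W = C_m$ of coprime orders, I would apply the reduction criterion of~\cite{KR}: it certifies that $\A$ is CI once (i) the S-rings induced on the coprime factors $U$ and $W$ are CI and (ii) a compatibility condition holds allowing the separate ``solving'' automorphisms of the two factors to be assembled into a single automorphism of $G$. Input (i) is available, since $U = C_p^2$ is a DCI-group whose S-rings are completely classified, while $W = C_m$ is cyclic of square free order and hence DCI by~\cite{M2}.

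The core of the proof is establishing the compatibility condition (ii), where I would deploy the structural techniques of~\cite{SM}. The idea is to analyse the decomposition of $\A$ relative to the factorisation $G = U \times W$ and, more finely, over the sections corresponding to the primes dividing $m$. Using tensor, star and generalised wreath decompositions of S-rings, I expect the following dichotomy. If $\A$ splits as a nontrivial product of S-rings over proper sections of $G$, then each factor is again an S-ring over a group of the form $C_p^{a} \times C_{m'}$ with $a \le 2$ and $m'$ square free, of smaller order, so the induction hypothesis applies and the solving automorphisms glue by the functoriality built into the decomposition. The hard part is the indecomposable case, in which no such splitting exists and the stabiliser $\aut(\A)_e$ acts in a genuinely mixed manner coupling the $C_p^2$-part to the $C_m$-part; here one must rule out the possibility that a regular $K \cong G$ is positioned differently from $G_R$ across several prime sections at once.

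To resolve the indecomposable case I would exploit the rigidity of S-rings over $C_p^2$: any nontrivial mixing must arise from a subgroup of $\mathrm{GL}(2,p)$ acting on $U$, and the coprimality of $p$ with $m$ sharply limits how such an action can be coupled to the square free factor. Combined with the uniqueness up to conjugacy of the regular copies of $G$ inside each factor --- which is exactly the DCI property of $U$ and of $W$ --- this yields the gluing demanded by~\cite{KR}. In summary: reduce to S-rings; settle the cyclic case by~\cite{M2}; in the remaining case $C_p^2 \times C_m$ run the \cite{KR} criterion with the factorwise CI inputs, and discharge its compatibility hypothesis through the \cite{SM} decomposition analysis, whose only genuinely difficult instance is the indecomposable mixed S-ring.
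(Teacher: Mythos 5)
Your overall framework (Babai's criterion, passage to transitivity modules, induction) matches the paper's, and quoting Muzychuk's theorem for the case $p \nmid n$ is legitimate, even though the paper reproves that case from scratch. But the core of your argument has genuine gaps. First, you treat the criterion of \cite{KR} as a device for gluing automorphisms across the coprime factorisation $G = U \times W$ with $U \cong C_p^2$, $W \cong C_m$. Proposition~\ref{KR} is no such thing: it applies only when $\A$ is a non-trivial $S=U/L$-wreath product (so $L \ne \{e\}$ normal, $U \ne G$, and every basic set outside $U$ a union of $L$-cosets), and its hypothesis is an equality of induced automorphism groups of the section $S$, not a compatibility condition between coprime direct factors. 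The tools that do handle a coprime factorisation are the star/tensor product results (Proposition~\ref{KM1}, Lemma~\ref{ci-complementary}), and these require $U$ and $W$ to be $\A$-subgroups. That is the second and deeper gap: nothing guarantees that the Sylow $p$-subgroup $P$ or the Hall $p'$-subgroup $C$ is an $\A$-subgroup, so ``the S-rings induced on the coprime factors'', on which your inputs (i) and (ii) rest, need not exist. Dealing with exactly this is the substance of the paper's proof: its Claims~1 and~2 show $\A$ is CI unless $C$ is an $\A$-subgroup and $\A_{G/C} \cong \Z C_p^2$, and even in that residual case $P$ is generally not an $\A$-subgroup, so the gluing (Claim~3) is performed prime by prime through star decompositions $\A_{\langle C,x\rangle}=\A_C \star \A_{P_1}$ coming from \cite{SM}, not by a factorwise appeal to DCI.

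Beyond this, your resolution of the ``indecomposable mixed'' case is not an argument: asserting that rigidity of S-rings over $C_p^2$, coprimality, and the DCI property of the two factors ``yield the gluing'' is precisely the statement that has to be proved. Note that the assertion ``a direct product of DCI-groups of coprime orders is a DCI-group'' is an open conjecture of Kov\'acs and Muzychuk (\cite{KM}, see also \cite{Dob}), so it cannot follow formally from the factors being DCI; any correct proof must exploit specific structure, as the paper does. Finally, you never restrict to minimal groups $A \in \Sup^{\min}(G\r)$ (Corollary~\ref{ci}); this minimality is what makes the structural lemmas you would need available (Proposition~\ref{KM2}, Lemma~\ref{sylow}, Lemma~\ref{ci-complementary}, Lemma~\ref{centre}), and without it even the basic facts you invoke implicitly (e.g., that $\A_{G/C}$ is a $p$-S-ring, hence $\Z C_p^2$ or $\Z C_p \wr \Z C_p$) are false for general schurian S-rings.
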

If $n$ is not divisible by $p$, then $C_p \times C_n \cong C_{pn}$ is a cyclic group of square free order, so this result includes
Muzychuk's theorem \cite{M1} and our methods provide an independent proof for that. If $p$ divides $n$, then $C_p \times C_n \cong C_p^2 \times C_{n/p}$
belongs to a new class of groups for which we establish the CI property.

\begin{thm}\label{main2}
If $p$ and $q$ are different primes, then $C_p^2 \times C_q^2$ is a DCI-group.
\end{thm}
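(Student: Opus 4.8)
The plan is to prove that $C_p^2 \times C_q^2$ is a DCI-group using Babai's criterion: a subset $S$ is CI if and only if every regular subgroup of $\aut(\cay(G,S))$ isomorphic to $G = C_p^2 \times C_q^2$ is conjugate to the right-regular copy of $G$. Following the Schur-ring methodology set up in the introduction, I would reformulate the whole problem in terms of Schur rings (S-rings) over $G$. The key reduction is that $S$ is CI precisely when, for the S-ring $\A = \A(G, \aut(\cay(G,S)))$ generated by the orbitals of the automorphism group (the transitivity module), any two regular subgroups isomorphic to $G$ inside $\aut(\cay(G,S))$ are fused; equivalently one works with the associated Cayley schemes and shows they are \emph{CI-S-rings}. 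So the first step is to prove that every S-ring over $G$ that contains a regular subgroup isomorphic to $G$ in its automorphism group is a CI-S-ring.

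Second, I would exploit the coprimality of $p$ and $q$ to decompose the problem multiplicatively. Since $\gcd(p,q)=1$, the group $G$ splits as a direct product of its Sylow subgroups $P = C_p^2$ and $Q = C_q^2$, and a central tool will be a tensor/wreath-type decomposition theorem for S-rings over such direct products: one shows that every S-ring over $G$ is built from S-rings over $P$ and over $Q$ via generalised wreath and tensor (star) products, the latter being exactly the construction of Hirasaka–Muzychuk \cite{HM} referenced in the introduction. The CI-property should then be reducible to the CI-property of the factors $C_p^2$ and $C_q^2$ together with a compatibility condition across the decomposition. Here one invokes that $C_p^2$ is a DCI-group (the rank-two elementary abelian case is classical and also follows from the $C_p^5$ result of Feng–Kov\'acs \cite{FK}) and that the combination with the second factor behaves well because the orders are coprime; this is the coprime-factor philosophy of the Kov\'acs–Muzychuk conjecture \cite{KM}, already established for $C_p^2 \times C_q$.

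Third, I would handle the S-rings that do \emph{not} decompose nicely, i.e.\ the primitive or otherwise ``irreducible'' S-rings over $G$, which are the genuine obstacle. For these one must understand the structure of a transitive group containing two regular copies of $C_p^2 \times C_q^2$; the criterion of \cite{KR} cited in the excerpt is designed exactly to certify that such regular subgroups are conjugate once the S-ring has a suitable form, so the plan is to classify the possible S-rings up to the point where either the decomposition of Step 2 applies or the \cite{KR} criterion applies directly. Combining these with the techniques of \cite{SM} (used for $C_p^3 \times C_q$ and $C_p^4 \times C_q$) should close all cases.

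The main obstacle I expect is precisely this classification of the indecomposable S-rings over $C_p^2 \times C_q^2$ and the verification that every regular $G$-subgroup in their automorphism groups is conjugate to the canonical one. Unlike the $C_p^2 \times C_q$ case, both Sylow subgroups now have rank two, so one loses the convenient feature that one factor is cyclic; the S-rings over $C_q^2$ can themselves be nontrivial and primitive, and the interaction between the two rank-two factors can produce S-rings whose automorphism groups are not immediately of wreath or tensor type. The delicate part will be to rule out ``exceptional'' regular subgroups by a careful analysis of the $p$-part and $q$-part of any putative extra regular subgroup, showing that coprimality forces each part to normalise the corresponding Sylow subgroup of the right-regular copy, and then that the \cite{KR} criterion can be applied componentwise. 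This is where the bulk of the technical work, and the real novelty over the previously settled cases, will lie.
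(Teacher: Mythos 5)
Your framework is the same as the paper's (Babai's criterion, CI-S-rings, reduction to $A \in \Sup^{\min}(G\r)$, then a split into decomposable and non-decomposable cases closed by the criterion of \cite{KR} and the tools of \cite{SM,KM}), and the decomposable half of your plan can indeed be carried out along those lines. The genuine gap is in Steps 2 and 3. First, the decomposition claim in Step 2 is false as stated: it is not true that every S-ring over $G=P\times Q$ is built from S-rings over $P$ and $Q$ by tensor/star and generalised wreath products. The Sylow subgroups $P$ and $Q$ need not even be $\A$-subgroups (blocks) of a given schurian S-ring $\A$, and a substantial part of the difficulty is exactly the situation $\underline{P}\notin\A$; moreover primitive S-rings of rank at least $3$ over $G$, and imprimitive indecomposable ones, are candidate objects that no such product decomposition reaches. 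Second, for these residual cases your proposal offers only a promise of ``classification'' and a heuristic that coprimality should force the $p$-part and $q$-part of an extra regular subgroup to normalise the corresponding Sylow subgroups of $G\r$ --- but that is precisely what fails in general, and no mechanism is given to rule out the exceptional configurations.

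What is actually needed (and what the paper supplies) is a concrete device for killing the primitive case: one passes to the rational closure $\trA=\A\cap W(G)$, which is again schurian and primitive and has rank $\geqslant 3$ whenever $\A$ does; one then proves by a combinatorial analysis of the $(p+1)\times(q+1)$ incidence pattern of basic sets (Theorem~\ref{primrat}, plus a database check for $|G|=36$) that such a primitive rational S-ring must possess a basic set of the form $H_1^\# \cup \cdots \cup H_m^\#$, where the $H_i$ are subgroups of order $pq$ intersecting pairwise trivially, i.e.\ a partial congruence partition. This turns the problem into finite geometry: the cosets of the $H_i$ form a translation net, the lines of the net are the unique maximal cliques of its collinearity graph (Proposition~\ref{net1}), and every abelian regular group of weak automorphisms consists of strong automorphisms (Proposition~\ref{net2}); combined with $\preceq_{G}$-minimality this forces each $H_i$ to be an $\A$-subgroup (Lemma~\ref{PCP}), contradicting primitivity or yielding CI outright. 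The imprimitive indecomposable case then still requires its own lengthy argument (showing a maximal proper $\A$-subgroup has prime index, and a delicate analysis of basic sets in the coset $Ux$ when $\underline{P}\notin\A$), which cannot be dispatched by applying the \cite{KR} criterion ``componentwise''. Without these ingredients --- the reduction to rational S-rings, the PCP/translation-net theorem, and the bespoke treatment of the indecomposable imprimitive case --- your outline does not close the cases that constitute the actual content of the theorem.
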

This theorem provides the first example besides elementary abelian $p$-groups of an infinite family of DCI-groups, which are not Burnside groups. The proof of 
Theorem~\ref{main2} uses some techniques from finite geometry.

As a consequence of Theorems~\ref{main1} and \ref{main2}, and results in \cite{HM,M2,Ry,SM}, we have the complete list of abelian 
DCI-groups whose order is a product of four not necessarily distinct primes.

\begin{thm}\label{main3}
The abelian DCI-groups whose order is a product of four not necessarily distinct primes 
are the following groups:
$$
C_p^4,~C_p^3 \times C_q,~C_p^2 \times C_q^2,~C_p^2 \times C_{qr},~C_r^2 \times 
C_4,~C_{4rs},~C_{pqrs},
$$
where $p, q, r, s$ are pairwise distinct primes and $r, s > 2$. 
\end{thm}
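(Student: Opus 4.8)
The plan is to treat Theorem~\ref{main3} as a bookkeeping consequence of the positive results already available together with the known necessary conditions for the DCI property. First I would fix an abelian group $G$ whose order $N$ is a product of four (not necessarily distinct) primes and pass to its Sylow decomposition $G=\prod_p G_p$. Since the DCI property is inherited by subgroups, every Sylow subgroup $G_p$ of a DCI-group $G$ is itself a DCI abelian $p$-group of order dividing $p^4$; conversely the sufficiency will be obtained by quoting the relevant product theorems. Thus the argument naturally splits into a necessary part, which cuts the list of abelian groups of order $N$ down to a short list of candidates, and a sufficiency part, which confirms each candidate.

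For the necessary part I would first classify the admissible Sylow subgroups. By the Babai--Frankl theorem~\cite{babaifrankl} an abelian $p$-group that is DCI must be either elementary abelian or cyclic of small order, and Muzychuk's classification of cyclic DCI-groups~\cite{M2} pins the latter down: among cyclic $p$-groups only $C_p$ (for odd $p$) and $C_2,C_4$ survive. Hence each $G_p$ must be one of $C_p^{a}$ with $a\le 4$, with the single extra possibility $C_4$ when $p=2$; in particular a $2$-Sylow subgroup such as $C_8$, $C_2\times C_4$ or $C_4\times C_4$ is forbidden. Counting prime factors with multiplicity (an elementary abelian $C_p^{a}$ contributes $a$, while $C_4$ contributes $2$) and running through the partitions of $4$ then leaves exactly the seven isomorphism types $C_p^4$, $C_p^3\times C_q$, $C_p^2\times C_q^2$, $C_r^2\times C_4$, $C_p^2\times C_{qr}$, $C_{4rs}$ and $C_{pqrs}$ appearing in the statement.

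For the sufficiency I would verify that each surviving candidate is in fact a DCI-group by invoking the appropriate earlier result: $C_p^4$ by Hirasaka--Muzychuk~\cite{HM}; $C_p^3\times C_q$ by~\cite{SM}; $C_p^2\times C_q^2$ by Theorem~\ref{main2}; $C_r^2\times C_4$ by Ryabov~\cite{Ry}; $C_{4rs}$ and $C_{pqrs}$ by Muzychuk's cyclic theorem~\cite{M2}; and $C_p^2\times C_{qr}$ by Theorem~\ref{main1}, using the identification $C_p^2\times C_{qr}\cong C_p\times C_{pqr}$ with $pqr$ square free. Matching these against the list produced by the necessary part shows that the candidates are precisely the DCI-groups, which proves the theorem.

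The only genuinely delicate point, and the one I would double-check most carefully, is the role of the prime $2$. It is the presence of $C_4$ as a non-elementary-abelian but still DCI $2$-group that creates the two exceptional families $C_r^2\times C_4$ and $C_{4rs}$; for every odd prime the corresponding Sylow subgroup is forced to be elementary abelian and no such exceptions arise. I would therefore verify the exhaustiveness of the enumeration with special attention to the $2$-part, confirming on the one hand that the forbidden $2$-groups $C_8$, $C_{16}$, $C_2\times C_4$ and $C_4\times C_4$ are correctly excluded, and on the other that the legitimate occurrences of $C_4$ are each accounted for by exactly one entry of the list --- for instance distinguishing $C_2^2\times C_q^2$, which falls under $C_p^2\times C_q^2$, from $C_4\times C_q^2$, which falls under $C_r^2\times C_4$.
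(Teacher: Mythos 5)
Your proposal is correct and takes essentially the same approach as the paper: the paper gives no separate proof of Theorem~\ref{main3}, presenting it as a direct consequence of Theorems~\ref{main1} and~\ref{main2} together with \cite{HM,M2,Ry,SM}, and your necessity argument (subgroup closure plus Babai--Frankl \cite{babaifrankl} and Muzychuk's cyclic classification \cite{M2}) followed by case-by-case sufficiency citations is exactly the intended bookkeeping. In particular, your matching of each family to its source ($C_p^4$ to \cite{HM}, $C_p^3\times C_q$ to \cite{SM}, $C_p^2\times C_q^2$ to Theorem~\ref{main2}, $C_p^2\times C_{qr}\cong C_p\times C_{pqr}$ to Theorem~\ref{main1}, $C_r^2\times C_4$ to \cite{Ry}, and the cyclic cases to \cite{M2}) coincides with the paper's attributions.
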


\medskip

The paper is organised as follows. The concept of Schur rings is presented in 
Section~\ref{sec:S-r1}.
The next two sections are devoted to preparation for the proof of our two main theorems. The main result in Section~\ref{sec:S-r2} is Lemma~\ref{sylow} that can certainly be applied to other infinite families of abelian groups. Section~\ref{sec:S-r3} collects results on different types of products of CI-S-rings. The proof of Theorem~\ref{main1} is contained in Section~\ref{sec:proof1}. Section~\ref{sec:S-r4} is devoted to the investigation of uniprimitive groups containing a regular subgroup isomorphic to 
$C_p^2 \times C_q^2$ using translation nets. The proof of Theorem~\ref{main2} is contained in Section~\ref{sec:proof2}.

\medskip

\noindent{\bf Notation.}
The set of non-identity elements of a group $G$ is denoted by $G^\#$.

For a subset $X \subseteq G$, the set $\{x^{-1} : x\in X\}$ is denoted by $X^{-1}$ and
the subgroup generated by $X$ is denoted by $\langle X\rangle$.
The element $\sum_{x \in X} {x}$ of the group ring $\Z G$ is denoted by
 $\underline{X}$.

For $L \trianglelefteqslant G$, the canonical
epimorphism from $G$ to $G/L$ is denoted by $\pi_{G/L}$.

The group of all permutations of a set $\Omega$ is denoted by
$\sym(\Omega)$ and the identity element of $\sym(\Omega)$ by $\mathrm{id}_\Omega$.

For $A \leqslant \sym(\Omega)$ and $\alpha \in \Omega$,
the stabiliser of $\alpha$ in $A$ is denoted by $A_{\alpha}$, the orbit of $\alpha$
under $A$ by $\alpha^A$, and the set of all orbits under $A$ by $\orb(A,\Omega)$.

The right regular representation of $G$ is denoted by
$\rho_G$, i.e., for $x, y \in G$, $x^{\rho_G(y)}=xy$.
The image $\rho_G(G)$ is also denoted by $G\r$.

The set of all permutation groups of $G$ containing
$G\r$ is denoted by $\Sup(G\r)$.

For a set $\Delta \subseteq \sym(G)$ and a section
$S=U/L$ of $G$, we set
$$
\Delta^S=\{\varphi^S:~\varphi \in \Delta,~S^\varphi=S\},
$$
where $S^\varphi=S$ means that $\varphi$ maps $U$ to itself and permutes the
$L$-cosets in $U$ among themselves and
$\varphi^S$ denotes the bijection of $S$ induced by $\varphi$.
%--------------------------------------------------------------------------------------------------------------%
\section{S-rings}\label{sec:S-r1}
Let $G$ be a finite group with identity element $e$
and $\Z G$ be the integer group ring.
A subring  $\A \subseteq \Z G$ is called an \emph{S-ring}
(or \emph{Schur ring}) over $G$ if there exists a partition
$\cS(\A)$ of $G$ such that

\begin{enumerate}[{\rm (1)}]
\setlength{\itemsep}{0.4\baselineskip}
\item $\{e\} \in \cS(\A)$,
\item  if $X \in \cS(\A)$ then $X^{-1} \in \cS(\A)$,
\item $\A=\Span_{\Z}\{\underline{X} :\ X \in \cS(\A)\}$.
\end{enumerate}
\smallskip

The elements in $\cS(\A)$ are called the \emph{basic sets} of
$\A$ and the number $\rk(\A):=|\cS(\A)|$ is called the \emph{rank} of
$\A$. The definition of an S-ring is due to Wielandt~(see \cite[Chapter~IV]{Wbook}).
The motivation comes from the following result of Schur
(see \cite[Theorem~24.1]{Wbook}).

\begin{thm}
{\rm (\cite{Schur})}
If $A \in \Sup(G\r)$, then the $\Z$-submodule
$\Span_{\Z}\{\underline{X} : X \in \orb(A_e,G)\}$
is a subring of $\Z G$.
\end{thm}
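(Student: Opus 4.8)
The plan is to show directly that the product of two orbit sums is again a $\Z$-combination of orbit sums; equivalently, that the structure constants attached to such products are constant on the orbits of $A_e$. Write $\orb(A_e,G)=\{T_0,T_1,\dots,T_r\}$ with $T_0=\{e\}$ --- this is an orbit since $A_e$ fixes $e$, so $\underline{T_0}=e$ will be the multiplicative identity. For two orbit sums we expand
$$
\underline{T_i}\,\underline{T_j}=\sum_{g\in G}c_g\,g,\qquad c_g=\bigl|\{(x,y)\in T_i\times T_j : xy=g\}\bigr|,
$$
so everything reduces to proving that $g\mapsto c_g$ is constant on each orbit $T_k$.

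The key device is the induced action of $A$ on $G\times G$ together with the containment $G\r\leqslant A$. As $A$ is transitive (it contains the regular group $G\r$), its orbits on $G\times G$ --- the $2$-orbits --- correspond bijectively to $\orb(A_e,G)$ via $(e,g)\mapsto g^{A_e}$; let $R_i$ denote the $2$-orbit matched with $T_i$. From the translation identity $(u,v)^{\rho_G(h)}=(uh,vh)$ and $\rho_G(h)\in A$ I would record that, for every $y\in G$,
$$
(e,z)\in R_i \iff (y,zy)\in R_i .
$$
Reparametrising the factorisation $g=xy$ by $y$ (so $x=gy^{-1}$) and applying this identity with $z=gy^{-1}$ turns the condition $x\in T_i$ into $(y,g)\in R_i$, whence
$$
c_g=\bigl|\{y\in G : (e,y)\in R_j \text{ and } (y,g)\in R_i\}\bigr|.
$$

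The constancy of $c_g$ on the orbits of $A_e$ is now immediate from invariance: each $R_i$ is an $A$-orbit, so if $a\in A_e$ satisfies $g^a=g'$, then $y\mapsto y^a$ is a bijection between the set of $y$ counted by $c_g$ and the set counted by $c_{g'}$, giving $c_g=c_{g'}$. Hence $c_g$ depends only on the orbit $T_k$ containing $g$, so $\underline{T_i}\,\underline{T_j}$ is a nonnegative-integer combination of the $\underline{T_k}$; together with $\underline{T_0}=e$ this shows that $\Span_\Z\{\underline{X}:X\in\orb(A_e,G)\}$ is closed under multiplication and contains the identity, i.e.\ is a subring of $\Z G$.

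The only genuine obstacle I anticipate is the bookkeeping around the translation identity and the reparametrisation: since $G\r$ acts on the right while the stabiliser is taken at $e$, it is easy to confuse the roles of $gy^{-1}$, $y^{-1}g$ and so on, and one must take care that $c_g$ is ultimately expressed as a manifestly $A$-invariant count --- the number of ``paths'' $e\to y\to g$ of prescribed $2$-orbit type. Once that expression is in hand the result follows from $A_e$-invariance with no further computation.
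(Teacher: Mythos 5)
Your proposal is correct, and every step checks out: the identification of $A_e$-orbits with $A$-orbits on $G\times G$ (valid since $A\geqslant G\r$ is transitive), the translation identity $(e,z)\in R_i \iff (y,zy)\in R_i$, the reparametrisation giving $c_g=\bigl|\{y\in G : (e,y)\in R_j,\ (y,g)\in R_i\}\bigr|$, and the final $A_e$-invariance argument are all accurate. Note that the paper itself offers no proof of this statement --- it is quoted with a citation to Schur and to Wielandt's book (Theorem~24.1) --- so there is no internal argument to compare against; what you have written is essentially the classical proof from those sources, counting ``paths'' $e\to y\to g$ of prescribed $2$-orbit type and using invariance of the $2$-orbits under the point stabiliser.
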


Clearly, the ring in the theorem is an example of an S-ring, also
called the \emph{transitivity module} over $G$ induced by $A$ and denoted by
$V(G,A_e)$.  An S-ring $\A$ is called \emph{schurian} if $\A=V(G,A_e)$ for some permutation group $A \in \Sup(G\r)$. We remark that not all S-rings are schurian
(see \cite{Wbook}).
In the particular case when $A=G\r K$ for some subgroup
$K \leqslant  \aut(G)$, the S-ring $V(G,A_e)$ is called \emph{cyclotomic} and also
denoted by $\cyc(K,G)$. In this case the basic sets are the orbits under $K$.

Let $\A$ be an S-ring over a group $G$. A set $X \subseteq G$ is  called an
\emph{$\A$-set} if $\underline{X} \in \A$, and a subgroup
$H \leqslant G$ is called an \emph{$\A$-subgroup} if
$\underline{H} \in \A$.
The S-ring $\A$ is \emph{primitive} if $G$ contains no non-trivial
proper $\A$-subgroup.
Suppose that $\A=V(G,A_e)$ for some permutation group
$A \in \Sup(G\r)$. Then $H \leqslant G$ is an $\A$-subgroup
if and only if the partition of $G$ into its right $H$-cosets is
$A$-invariant. Hence $A$ is primitive if and only if so is
$V(G,A_e)$.

\begin{prop}\label{W}
{\rm (\cite{W})} Suppose that $G$ is an abelian group of composite order having a cyclic
Sylow subgroup. Then every primitive S-ring over $G$ is of rank $2$.
\end{prop}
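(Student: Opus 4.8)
The plan is to argue by contradiction: I assume $\A$ is a primitive S-ring over $G$ with $\rk(\A)\ge 3$ and manufacture a proper nontrivial $\A$-subgroup, which is forbidden by primitivity. Let $p$ be a prime whose Sylow subgroup $P\le G$ is cyclic, and write $G=P\times H$ with $\gcd(|P|,|H|)=1$ (possible since $G$ is abelian). As $P$ is cyclic it has a unique subgroup $P_0$ of order $p$; since $H$ contains no element of order $p$, this gives the clean description $P_0=\{g\in G:\ g^p=e\}$, i.e. $P_0$ is exactly the kernel of the $p$-power endomorphism $\phi_p\colon g\mapsto g^p$. Because $|G|$ is composite, $P_0$ is simultaneously nontrivial and proper. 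The whole proof therefore reduces to one assertion: under the standing hypothesis $\rk(\A)\ge 3$, the element $\underline{P_0}$ lies in $\A$.

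The two engines I would use are Schur's theorem on multipliers and the Frobenius congruence. For every $m$ coprime to $|G|$ the map $\sigma_m\colon g\mapsto g^m$ lies in $\aut(G)$ and, by the multiplier theorem, permutes the basic sets of $\A$; thus the partition $\cS(\A)$ is invariant under the multiplier group $M=\langle\sigma_m\rangle$, and $M$ restricts on $P$ to all power maps $x\mapsto x^u$ with $\gcd(u,p)=1$, so its orbit on a generator of $P$ is the whole set of generators. This lets me track how the basic sets distribute over the order-layers of $P$. The second engine is that for any basic set $X$ one has $\underline{X}^{\,p}\equiv\sum_{x\in X}x^{p}\pmod{p}$ in $\Z G$, while $\underline{X}^{\,p}\in\A$ because $\A$ is a ring; combining this with the Schur--Wielandt principle (the coefficients of any element of $\A$ are constant on basic sets, so its level sets are $\A$-sets) transfers information from $X$ to its image $X^{(p)}=\{x^p:x\in X\}$ and to the fibres of $\phi_p$, which are precisely the $P_0$-cosets.

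The heart of the argument, and the step I expect to be the main obstacle, is converting this congruence data into the exact membership $\underline{P_0}\in\A$. Both features of the hypothesis are used at once. Cyclicity of $P$ guarantees that $\phi_p$ has kernel exactly $P_0$ and that $\phi_p$ is a bijection on $H$, so every fibre is a single $P_0$-coset and the multiplicities $\#\{x\in X:\ x^p=y\}$ read off the intersection sizes $|X\cap P_0c|$; in particular the coprime part $H$ is harmless, the collapsing governed by $\phi_p$ taking place entirely on the cyclic $p$-part. Compositeness of $|G|$ guarantees a basic set $X$ with $\{e\}\ne X\ne G^\#$, and primitivity forces $\langle X\rangle=G$ and $\rad(\underline{X})=\{e\}$, which rules out the degenerate case in which all the $P_0$-coset intersections are constant. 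Following Schur's classical treatment of the cyclic case, I would then propagate the nonconstancy of these intersections through the $M$-action to produce a nonzero $\A$-set $Y$ that is a union of $P_0$-cosets, i.e. $P_0\subseteq\rad(\underline{Y})$. Making this propagation quantitatively precise — extracting an exact $0/1$-valued $\A$-element supported on a union of $P_0$-cosets from the mod-$p$ information — is the delicate technical core.

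Since the radical of an $\A$-set is always an $\A$-subgroup, $P_0\subseteq\rad(\underline{Y})$ yields $\underline{P_0}\in\A$, so $P_0$ is a proper nontrivial $\A$-subgroup, contradicting primitivity. Hence no primitive S-ring of rank $\ge 3$ exists over $G$, and every primitive S-ring over $G$ has rank $2$. In the write-up I would first settle the pure case $G=C_{p^k}$ with $k\ge 2$, where the multiplier/Frobenius computation already appears in its essential form, and then observe that the presence of the coprime factor $H$ changes nothing because $\phi_p$ acts invertibly on $H$ and has kernel $P_0$ on all of $G$.
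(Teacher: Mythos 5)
The paper itself gives no proof of Proposition~\ref{W} --- it is quoted from Wielandt's 1935 paper --- so the only benchmark is the classical Schur--Wielandt argument, and against any benchmark your proposal has a genuine gap: the step you yourself call ``the delicate technical core'' \emph{is} the theorem, and you never carry it out. To locate the gap precisely: the part of your plan before that point can in fact be made rigorous with the paper's Proposition~\ref{SW}(ii). Writing $P_0=G[p]$ (of order $p$ exactly because the Sylow $p$-subgroup is cyclic), the set $X^{[p]}$ is an $\A$-set contained in $G^{(p)}=\{g^p : g\in G\}$, which is a subgroup of index $p$ in $G$; primitivity then forces $\langle X^{[p]}\rangle=\{e\}$, which translates into: for every basic set $X$, the part $X\setminus P_0$ is a union of $P_0$-cosets. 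If moreover no basic set met both $P_0^\#$ and $G\setminus P_0$, then $P_0^\#$ would be a union of basic sets, so $P_0$ would be a proper nontrivial $\A$-subgroup and you would be done. What your outline cannot handle is the remaining configuration: a basic set $X=A\cup B$ with $\emptyset\neq A\subseteq P_0^\#$ and $B$ a nonempty union of $P_0$-cosets outside $P_0$. For such an $X$ the sets $X^{[p,k]}$ of Proposition~\ref{SW}(ii) return only $\{e\}$ or $\emptyset$, and the multiplier action yields invariance statements but no new $\A$-subgroup; ``propagating the nonconstancy'' names a hope, not a mechanism. Eliminating exactly this mixed configuration is what occupies Schur's and Wielandt's proofs (coefficient analysis of group-ring products such as $\underline{X}\,\underline{X^{(-1)}}$, rationality arguments, and in Wielandt's case a careful treatment of elements of composite order), and nothing in your sketch substitutes for it.

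A second, independent gap is your closing reduction: the claim that once $G=C_{p^k}$ is settled, ``the presence of the coprime factor $H$ changes nothing.'' This understates precisely the difficulty that forced Wielandt to go beyond Schur's cyclic case: basic sets may mix elements whose orders involve different primes, and controlling that mixing requires dedicated arguments (compare the paper's own Lemma~\ref{rank2}(i), which exists exactly to handle basic sets containing elements of coprime orders). So what you have is a correct framing --- reduce to producing a proper nontrivial $\A$-subgroup, exploit $G[p]=P_0$ and the fibres of $g\mapsto g^p$, use Proposition~\ref{SW} --- together with an accurate diagnosis of where the difficulty sits, but the difficult step itself, and the final reduction, are asserted rather than proved.
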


For a subset $X \subseteq G$ and integer $m$, define
$X^{(m)}=\{ x^m : x \in X\}$; and for a group ring
element $\eta=\sum_{g \in G}c_g g$, define
$\eta^{(m)}=\sum_{g \in G}c_g g^m$.
Two useful properties of S-rings over abelian groups are invoked next.
The statement in part (i) is \cite[Theorem~23.9(a)]{Wbook} and the statement in part (ii)  follows from the proof of \cite[Theorem~23.9(b)]{Wbook}.
For an abelian group $G$ and a prime divisor $p$ of the order of $G$ we will use the notation $G[p]=\{g\in G : g^p=e \}$.

\begin{prop}\label{SW}
{\rm (\cite{Wbook})}
Let $\A$ be an S-ring over an abelian group $G$.
\begin{enumerate}[{\rm (i)}]
\item If $m$ is an integer coprime to $|G|$ and $\eta \in \A$, then
$\eta^{(m)} \in \A$. In particular, $X^{(m)} \in \cS(\A)$ whenever $X \in \cS(\A)$.
\item If $p$ is a prime divisor of $|G|$, $1 \leqslant k \leqslant p-1$
and $X \subseteq G$ is an $\A$-set, then the set
$$
X^{[p,k]}:=\{ x^p : x \in X~\text{and}~|X \cap xG[p]| \equiv k(\mathrm{mod}~p) \}
$$
is an $\A$-set (possibly empty). Hence the set
$$
X^{[p]}:=\{ x^p : x \in X~\text{and}~|X \cap xG[p]| \not\equiv 0(\mathrm{mod}~p) \}
$$
is also an $\A$-set.
\end{enumerate}
\end{prop}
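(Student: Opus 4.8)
The plan is to reduce everything to the Frobenius identity in the group algebra over a prime field, together with the fact that $\A$ is closed under multiplication. For a prime $\ell$, write $\bar\eta$ for the image of $\eta\in\Z G$ under the reduction $\Z G\to\mathbb{F}_\ell G$, and let $\bar\A$ denote the image of $\A$. The one structural fact I will use repeatedly is the following: since the basic sets partition $G$, the elements $\overline{\underline{Y}}$ with $Y\in\cS(\A)$ have pairwise disjoint supports and hence form an $\mathbb{F}_\ell$-basis of $\bar\A$; consequently an element of $\mathbb{F}_\ell G$ lies in $\bar\A$ if and only if its coefficient function $G\to\mathbb{F}_\ell$ is constant on every basic set. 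Combined with the obvious remark that a $0$–$1$ vector $\underline{Z}$ lies in $\A$ exactly when $Z$ is a union of basic sets, this lets me recognise membership in $\A$ from congruence information modulo $\ell$.

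For part (i) I would first treat the case $m=\ell$, where $\ell$ is a prime not dividing $|G|$; let $X$ be a basic set and $\eta=\underline{X}$. As $\A$ is a subring, $\eta^{\ell}\in\A$, and since $G$ is abelian the freshman's dream in characteristic $\ell$ gives $(\bar\eta)^{\ell}=\overline{\eta^{(\ell)}}$. Because $\gcd(\ell,|G|)=1$, the power map $\sigma_\ell\colon g\mapsto g^{\ell}$ is a bijection of $G$, so $X^{(\ell)}$ is a set of size $|X|$ and $\eta^{(\ell)}=\underline{X^{(\ell)}}$. Thus $\overline{\underline{X^{(\ell)}}}\in\bar\A$; by the structural fact its $0$–$1$ coefficient function is constant on each basic set, so $X^{(\ell)}$ is a union of basic sets and $\underline{X^{(\ell)}}\in\A$. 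Extending $\Z$-linearly over $\cS(\A)$ yields $\eta^{(\ell)}\in\A$ for all $\eta\in\A$. A general $m$ coprime to $|G|$ can be replaced by a positive representative modulo $|G|$ and factored into primes, none of which divide $|G|$, so iterating $\sigma_\ell$ gives $\eta^{(m)}\in\A$. Finally, to see that $X^{(m)}$ is a single basic set rather than merely a union, apply the same argument to $m'$ with $mm'\equiv 1\pmod{|G|}$: both $\sigma_m$ and its inverse $\sigma_{m'}$ carry basic sets to unions of basic sets, which forces them to permute $\cS(\A)$.

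Part (ii) uses the same mechanism, now with the prime $p$ dividing $|G|$, so that $\sigma_p$ is no longer injective. Let $X$ be an $\A$-set and $\eta=\underline{X}$. The fibres of $\sigma_p$ are exactly the cosets of $G[p]=\{g:g^p=e\}$, so $\eta^{(p)}=\sum_{g\in G^{(p)}}a_g\,g$ with $a_g=|X\cap xG[p]|$ for any $x$ satisfying $x^p=g$; this is the quantity appearing in the statement. Again $\eta^{p}\in\A$ and $(\bar\eta)^{p}=\overline{\eta^{(p)}}=\sum_{g}\overline{a_g}\,g$, so by the structural fact the function $g\mapsto\overline{a_g}$ is constant on each basic set. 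For $1\le k\le p-1$ we have $X^{[p,k]}=\{g:a_g\equiv k\pmod{p}\}$, which is therefore a union of basic sets, giving $\underline{X^{[p,k]}}\in\A$; and $X^{[p]}=\bigcup_{k=1}^{p-1}X^{[p,k]}$ is an $\A$-set as a union of $\A$-sets.

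The genuinely load-bearing step, and the one I would be most careful with, is the passage from the mod-$\ell$ (resp.\ mod-$p$) identity back to an integral statement about $\A$: it works precisely because membership in $\bar\A$ is detected by the coefficient function being constant on basic sets, and a $0$–$1$ vector constant on basic sets is automatically a union of them, so no information is lost in the reduction. Everything else—the Frobenius identity, the multiplicativity used to pass from a prime $\ell$ to a general multiplier $m$ in part (i), and the single-basic-set refinement via the inverse multiplier—is routine once this recognition principle is in place.
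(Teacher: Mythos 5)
Your proof is correct. The paper itself does not prove this proposition but cites Wielandt's book (Theorem~23.9 there), and your argument --- the Frobenius identity $\bar\eta^{\,p}=\overline{\eta^{(p)}}$ in $\mathbb{F}_pG$ combined with the recognition principle that an element of the reduced ring lies in $\bar\A$ exactly when its coefficient function is constant on basic sets (Wielandt's ``coefficient sets of $\A$-elements are $\A$-sets'') --- is essentially the classical proof behind that citation, so there is nothing to add.
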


Let $G$ be an arbitrary group and $\A$ be an S-ring over $G$.
With each $\A$-set $X$ one can naturally associate two
$\A$-subgroups, namely, $\langle X \rangle$ and
$$
\rad(X):=\{g\in G:\ gX=Xg=X\}.
$$

Let $L \trianglelefteqslant U \leqslant G$. The section $U/L$ is called an \emph{$\A$-section} if $U$ and $L$ are $\A$-subgroups. If $S=U/L$ is an $\A$-section, then the module
$$
\A_S:=\Span_{\Z}\{ \underline{X^{\pi_{U/L}}}:~X \in \cS(\A),~
X \subseteq U \}
$$
is an S-ring over $S$.
Note that, if $\A=V(G,A_e)$ and $S$ is an $\A$-section,
then $\A_S=V(S,(A^S)_{e_S})$ and so $\A_S$ is schurian (see \cite[Proposition~2.8]{HM}). Here $e_S$ denotes the identity element of $S$.

Let $\A$ be an S-ring over a group $G$ and $\B$ be an
S-ring over a group $H$. A bijection $\varphi : G \to H$ is
called an \emph{isomorphism} from $\A$ to $\B$ if
$\rk(\A)=\rk(\B)=r$,
and there is an ordering $X_1,\ldots,X_r$ of the basic sets
in $\cS(\A)$ and an ordering $Y_1,\ldots,Y_r$ of the basic sets  in
$\cS(\B)$ such that $\varphi$ is an isomorphism from
$\cay(G,X_i)$ to $\cay(H,Y_i)$ for every $1 \leqslant i \leqslant r$.
If there is an isomorphism from $\A$ to $\B$, then we say
that $\A$ and $\B$ are \emph{isomorphic} and write $\A \cong \B$.
Let $\iso(\A,\B)$ denote the set of all isomorphisms from $\A$ to $\B$.
An isomorphism $\varphi \in \iso(\A,\B)$ is called \emph{normalised} if it maps
the identity element $e_G$ to the identity element $e_H$.
If $\varphi$ is normalised, then
$X_i^{\,\varphi} \in \cS(\B)$ for every basic set $X_i \in \cS(\A)$ and $\varphi$
also satisfies the condition:
\begin{equation}\label{eq:XiXj}
\forall  1 \leqslant i, j \leqslant r:~(X_i X_j)^\varphi=X_i^{\,\varphi}X_j^{\,\varphi}.
\end{equation}

Some further properties are collected below.

\begin{prop}\label{iso}
{\rm (\cite[Proposition~2.7]{HM})}
Let $\varphi : \A \to \B$ be a normalised isomorphism from an
S-ring $\A$ over a group $G$ to an S-ring $\B$ over a group $H$,
and let $E \leqslant G$ be an $\A$-subgroup.
\begin{enumerate}[{\rm (i)}]
\item The image $E^\varphi$ is a $\B$-subgroup of $H$. Moreover, the restriction
$\varphi_E : E \to E^\varphi$ is an isomorphism between $\A_E$ and
$\B_{E^\varphi}$.
\item For each $x \in G$, $(Ex)^\varphi=E^\varphi x^\varphi$.
\item If $E \trianglelefteqslant G$ and $E^\varphi \trianglelefteqslant H$,
then the mapping $\varphi^{G/E} : G/E \to H/E^\varphi$,
defined by $(Ex)^{\varphi^{G/E}}=E^\varphi x^\varphi$ is a normalised
isomorphism between $\A_{G/E}$ and $\B_{H/E^\varphi}$.
\end{enumerate}
\end{prop}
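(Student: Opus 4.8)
The plan is to reduce everything to two facts about a normalised isomorphism $\varphi$: first, that $\varphi$ carries $\cS(\A)$ bijectively onto $\cS(\B)$, so that writing $Y_i := X_i^{\,\varphi}$ we may index the basic sets of $\B$ by those of $\A$; and second, the multiplicativity relation \eqref{eq:XiXj}. I would begin by upgrading \eqref{eq:XiXj} from basic sets to arbitrary $\A$-sets: since any $\A$-set is a union of basic sets and the image of a union is the union of the images, one obtains $(XY)^\varphi = X^\varphi Y^\varphi$ for all $\A$-sets $X,Y$. In the same way, restricting the Cayley isomorphism $\cay(G,X_i)\to\cay(H,Y_i)$ and evaluating the edge condition $hg^{-1}\in X_i \Leftrightarrow h^\varphi(g^\varphi)^{-1}\in Y_i$ at $h=e_G$ yields $(X_i^{-1})^\varphi = Y_i^{-1}$, hence $(X^{-1})^\varphi=(X^\varphi)^{-1}$ for every $\A$-set $X$. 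These two identities are the only tools needed, and the reason they are needed is the main conceptual point: $\varphi$ is merely a bijection of the underlying sets, not a group homomorphism, so one is never allowed to write $(ab)^\varphi=a^\varphi b^\varphi$ for individual elements, and must instead argue at the level of $\A$-sets and of the incidence condition on the ``differences'' $hg^{-1}$.

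For part (i), I would write $E=\bigcup_{i\in I} X_i$ as a union of basic sets; then $E^\varphi=\bigcup_{i\in I} Y_i$ is a $\B$-set. It contains $e_H$ because $\{e_G\}$ is one of the $X_i$ and $\{e_G\}^\varphi=\{e_H\}$, it is closed under inversion by the relation above, and it is closed under multiplication because $(X_iX_j)^\varphi=Y_iY_j\subseteq E^\varphi$ whenever $i,j\in I$ (here $X_iX_j\subseteq E$ is a union of basic sets $X_k$ with $k\in I$); hence $E^\varphi$ is a $\B$-subgroup. For the second assertion I would simply restrict: since $E$ and $E^\varphi$ are subgroups containing the respective basic sets, the subgraph of $\cay(G,X_i)$ induced on $E$ is exactly $\cay(E,X_i)$, and likewise on the $H$ side, so $\varphi_E$ is a Cayley isomorphism $\cay(E,X_i)\to\cay(E^\varphi,Y_i)$ for each $i\in I$. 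As $\{X_i:i\in I\}$ and $\{Y_i:i\in I\}$ are precisely the basic sets of $\A_E$ and $\B_{E^\varphi}$, this exhibits $\varphi_E$ as an S-ring isomorphism.

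Part (ii) is then immediate from the incidence description: $y\in Ex \Leftrightarrow yx^{-1}\in E \Leftrightarrow y^\varphi(x^\varphi)^{-1}\in E^\varphi \Leftrightarrow y^\varphi\in E^\varphi x^\varphi$, where the middle equivalence is the Cayley isomorphism property applied to the $\A$-set $E$ (a union of the graphs $\cay(G,X_i)$, $i\in I$). For part (iii), well-definedness and bijectivity of $\varphi^{G/E}$ follow from part (ii) together with the bijectivity of $\varphi$ (equal cosets have equal images, and conversely), while normalisation is clear from $(Ee_G)^{\varphi^{G/E}}=E^\varphi$. The remaining, and most delicate, step is to verify the edge condition on the quotient. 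Here I would use normality of $E$ to identify $\pi_{G/E}^{-1}(\pi_{G/E}(X_i))=EX_i$, apply the upgraded multiplicativity to get $(EX_i)^\varphi=E^\varphi Y_i$, and then run the incidence argument for the $\A$-set $EX_i$: for cosets $Eg,Eh$ one has $(Eh)(Eg)^{-1}=E\,hg^{-1}\in\pi_{G/E}(X_i)$ iff $hg^{-1}\in EX_i$ iff $h^\varphi(g^\varphi)^{-1}\in E^\varphi Y_i$ iff $(E^\varphi h^\varphi)(E^\varphi g^\varphi)^{-1}\in\pi_{H/E^\varphi}(Y_i)$, which is exactly the assertion that $\varphi^{G/E}$ is a Cayley isomorphism for the matched basic sets. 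I expect this quotient computation, together with the bookkeeping needed to confirm that $\pi_{G/E}(X_i)$ and $\pi_{H/E^\varphi}(Y_i)$ really are the corresponding basic sets of $\A_{G/E}$ and $\B_{H/E^\varphi}$, to be the only place requiring genuine care; everything else is a direct translation between the group-ring relations and the Cayley-graph incidences.
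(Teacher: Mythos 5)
The paper never proves this proposition: it is imported verbatim from Hirasaka--Muzychuk \cite[Proposition~2.7]{HM}, so there is no in-paper argument to compare yours against. Judged on its own, your proof is correct, and it is built on exactly the two facts the paper records about normalised isomorphisms --- that $\varphi$ carries $\cS(\A)$ into $\cS(\B)$, and the relation \eqref{eq:XiXj} --- upgraded in the obvious way (unions) to arbitrary $\A$-sets. Parts (i) and (ii) are fine as written: the inversion identity obtained by setting $h=e_G$ in the edge condition, the closure computation $Y_iY_j=(X_iX_j)^\varphi\subseteq E^\varphi$, and the coset identity $(Ex)^\varphi=E^\varphi x^\varphi$ all check out, and your insistence on arguing only at the level of $\A$-sets and incidences (never elementwise multiplicativity of $\varphi$) is precisely the right discipline. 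In part (iii) the chain of equivalences built on $\pi_{G/E}^{-1}(\pi_{G/E}(X_i))=EX_i$ and $(EX_i)^\varphi=E^\varphi Y_i$ is a valid verification of the Cayley-isomorphism condition on the quotients. Two pieces of bookkeeping that you flag but do not carry out each need one extra line. First, in part (i), that $\cS(\B_{E^\varphi})$ is \emph{precisely} $\{Y_i : i\in I\}$: this uses that $X\mapsto X^\varphi$ is a bijection $\cS(\A)\to\cS(\B)$ (equal ranks plus injectivity under the bijection $\varphi$), so any basic set of $\B$ contained in $E^\varphi$ pulls back under $\varphi^{-1}$ to a basic set of $\A$ contained in $E$. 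Second, in part (iii), that the assignment $\pi_{G/E}(X_i)\mapsto\pi_{H/E^\varphi}(Y_i)$ is well defined and bijective between $\cS(\A_{G/E})$ and $\cS(\B_{H/E^\varphi})$; this follows from your own identity, since $\pi_{G/E}(X_i)=\pi_{G/E}(X_j)$ iff $EX_i=EX_j$, which under $\varphi$ is equivalent to $E^\varphi Y_i=E^\varphi Y_j$, i.e., to $\pi_{H/E^\varphi}(Y_i)=\pi_{H/E^\varphi}(Y_j)$. With those two lines added, your argument is a complete and self-contained proof of the cited result.
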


We are interested in isomorphisms between S-rings over the same group and set
$$
\iso(\A)=\bigcup_{\B~\text{is an S-ring} \atop \text{over}~G}
\iso(\A,\B)~\text{and}~
\iso_e(\A)=\{\varphi \in \iso(\A) : e^\varphi=e\}.
$$
Clearly, $\iso(\A,\A)$ is a subgroup of $\sym(G)$, which contains the
normal subgroup defined as
$$
\aut(\A)=\bigcap_{X\in \cS(\A)}\aut(\cay(G,X)).
$$
This is called the \emph{automorphism group} of $\A$ (see~\cite{KP}).
Clearly, $G\r \leqslant \aut(\A)$.
%--------------------------------------------------------------------------------------------------------------%
\section{DCI-groups and CI-S-rings}\label{sec:S-r2}
Babai~\cite{B} gave the following group theoretical
criterion for a subset $X \subseteq G$ to be a CI-subset.

\begin{prop}\label{B}
{\rm (\cite[Lemma~3.1]{B})}
A subset $X \subseteq G$ is a CI-subset if and only if
any two regular subgroups of $\aut(\cay(G, X))$ isomorphic to $G$
are conjugate in $\aut(\cay(G,X))$.
\end{prop}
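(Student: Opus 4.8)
The plan is to prove both implications through the standard dictionary between regular subgroups of the automorphism group and connection sets, together with the classical fact that the normaliser of the regular representation in $\sym(G)$ is the holomorph, $N_{\sym(G)}(G\r)=G\r\rtimes\aut(G)$, whose point stabiliser at $e$ is exactly $\aut(G)$. Throughout I write $A=\aut(\cay(G,X))$, so that $G\r\leqslant A$ is a regular subgroup isomorphic to $G$.

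For the forward direction I assume that all regular subgroups of $A$ isomorphic to $G$ are conjugate in $A$, and I suppose $\cay(G,X)\cong\cay(G,T)$ via a bijection $\varphi$. Since $\varphi$ conjugates $\aut(\cay(G,T))$ onto $A$, that is, $\aut(\cay(G,T))=\varphi^{-1}A\varphi$, and since $G\r\leqslant\aut(\cay(G,T))$, the group $\varphi G\r\varphi^{-1}$ is a regular subgroup of $A$ isomorphic to $G$. By hypothesis there is $\psi\in A$ with $\psi^{-1}G\r\psi=\varphi G\r\varphi^{-1}$, whence $(\psi\varphi)^{-1}G\r(\psi\varphi)=G\r$, so $\psi\varphi\in N_{\sym(G)}(G\r)$. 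Setting $c=e^{\psi\varphi}$ and multiplying by $\rho_G(c^{-1})\in G\r$ to kill the translation part, I obtain $\alpha:=\psi\varphi\rho_G(c^{-1})$, which fixes $e$ and still normalises $G\r$, hence $\alpha\in\aut(G)$. Since $\psi\in A$ and $\rho_G(c^{-1})\in\aut(\cay(G,T))$, the map $\alpha$ remains a graph isomorphism $\cay(G,X)\to\cay(G,T)$ fixing $e$; comparing the out-neighbourhoods of $e$, which are $X$ and $T$ respectively, gives $X^\alpha=T$, so $X$ is CI.

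For the converse I assume $X$ is CI and let $G_1\leqslant A$ be any regular subgroup isomorphic to $G$; I aim to show $G_1$ is conjugate to $G\r$ in $A$. Fixing an abstract isomorphism $\theta:G\to G_1$, I define $\varphi:G\to G$ by $g^\varphi=e^{\theta(g)}$. Regularity of $G_1$ makes $\varphi$ a bijection with $e^\varphi=e$, and a direct computation shows $\varphi^{-1}\rho_G(y)\varphi=\theta(y)$ for all $y\in G$, so that $\varphi^{-1}G\r\varphi=G_1$. Setting $X_1:=X^{\varphi^{-1}}$, the same computation shows $\varphi$ is a graph isomorphism $\cay(G,X_1)\to\cay(G,X)$, and hence $\cay(G,X_1)\cong\cay(G,X)$. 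Here the CI-hypothesis enters: there is $\alpha\in\aut(G)$ with $X_1=X^\alpha$, so the canonical map $\alpha:\cay(G,X)\to\cay(G,X_1)$ composed with $\varphi$ gives $\alpha\varphi\in A$. Since $\alpha\in\aut(G)$ normalises $G\r$, conjugating yields $(\alpha\varphi)^{-1}G\r(\alpha\varphi)=\varphi^{-1}G\r\varphi=G_1$, so $G_1$ and $G\r$ are conjugate in $A$; transitivity of conjugacy then handles any two such subgroups.

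The conjugation bookkeeping is routine, but the one genuinely load-bearing step in each direction is the passage from an abstract isomorphism $\varphi$, which a priori need not lie in $A$, to a conjugating element that does lie in $A$. This is exactly where the hypothesis is consumed: the conjugacy assumption in the forward direction and the CI-property in the converse, with the crux being to arrange that $\psi\varphi$ (respectively $\alpha\varphi$) lands inside $\aut(\cay(G,X))$ while still relating $G\r$ to the target regular subgroup. I would also invoke $N_{\sym(G)}(G\r)=G\r\rtimes\aut(G)$ as a standard property of the holomorph.
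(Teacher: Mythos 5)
The paper does not prove this statement at all---it is quoted as Babai's Lemma~3.1---so there is no internal proof to compare against; judged on its own, your argument is correct and is essentially Babai's classical proof. Both directions are sound: in the forward direction the holomorph fact $N_{\sym(G)}(G\r)=G\r\rtimes\aut(G)$ legitimately turns the normalising element $\psi\varphi$ into an element of $\aut(G)$ after killing the translation part, and in the converse the identity $\varphi^{-1}\rho_G(y)\varphi=\theta(y)$ combined with $G_1\leqslant\aut(\cay(G,X))$ (which is exactly what makes the pullback graph $G\r$-invariant) does justify the terse claim that $\varphi$ is a graph isomorphism $\cay(G,X^{\varphi^{-1}})\to\cay(G,X)$.
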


Let $A, B \in \Sup(G\r)$ such that $A \leqslant B$. Then
$A$ is said to be a
$\mathit{G\r}$\emph{-complete subgroup} of $B$, denoted by
$A \preceq_{G} B$, if for every $\varphi \in \sym(G)$, the inclusion
$(G\r)^\varphi \leqslant B$ implies
$(G\r)^{\varphi\psi} \leqslant A$ for some $\psi \in B$
(see \cite[Definition~2]{HM}). Notice that, the relation
$\preceq_{G}$ is a partial order on $\Sup(G\r)$. In this context
Proposition~\ref{B} reads as
\begin{equation}\label{eq:B2}
X \subseteq G~\text{is a CI-subset} \iff
G\r \preceq_{G} \aut(\cay(G,X)).
\end{equation}

Let $A \leqslant \sym(G)$. The $\mathit{2}$\emph{-closure} $A^{(2)}$ is
the largest permutation group of $G$ satisfying
$$
\orb(A^{(2)},G\times G)=\orb(A,G\times G),
$$
where the groups $A^{(2)}$ and $A$ act on $G \times G$
coordinate-wise. The group $A$ is called
$\mathit{2}$\emph{-closed} if $A^{(2)}=A$.
If $\A=V(G,A_e)$, then $\aut(\A)=A^{(2)}$.
It is well-known that $\aut(\cay(G,X))$ is $2$-closed for
any subset $X \subseteq G$. It follows from this and
\eqref{eq:B2} that $G$ is a DCI-group if
$G\r \preceq_{G} A$ for every $2$-closed permutation group $A \in \Sup(G\r)$.

\begin{prop}\label{HM}
{\rm (\cite[Theorem~2.6]{HM})}
Let $A \in \Sup(G\r)$ be a $2$-closed permutation group and $\A=V(G,A_e)$.
Then the following statements are equivalent.
\begin{enumerate}[{\rm (i)}]
\item $G\r \preceq_{G} A$.
\item $\iso(\A)=\aut(\A) \aut(G)$.
\item $\iso_e(\A)=\aut(\A)_e \aut(G)$.
\end{enumerate}
\end{prop}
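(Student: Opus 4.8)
The plan is to route all three conditions through a single \emph{dictionary} between normalised S-ring isomorphisms of $\A$ and conjugates of $G\r$ sitting inside $A$. Throughout I would use that $A$ is $2$-closed, so $\aut(\A)=A^{(2)}=A$ and in particular $\aut(\A)_e=A_e$; this is exactly the hypothesis that lets me phrase the dictionary in terms of $A$ itself rather than its $2$-closure.

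I would first dispose of (ii)$\iff$(iii), which is bookkeeping. Every $\varphi\in\iso(\A)$ may be post-composed with $\rho_G\big((e^\varphi)^{-1}\big)\in G\r\leqslant\aut(\A)$ to fix $e$, so $\iso(\A)=\iso_e(\A)\,G\r$; likewise $\aut(\A)=\aut(\A)_e\,G\r$ since $G\r$ is transitive. Because $\aut(G)$ normalises $G\r$ (one checks $\sigma^{-1}\rho_G(g)\sigma=\rho_G(g^\sigma)$), the products $\aut(\A)\aut(G)$ and $\aut(\A)_e\aut(G)$ differ precisely by the factor $G\r$, and intersecting with the stabiliser of $e$ passes between (ii) and (iii) in both directions.

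The heart of the argument is the claim
$$
\iso_e(\A)=\{\varphi\in\sym(G):e^\varphi=e\ \text{and}\ G\r\leqslant \varphi^{-1}A\varphi\}.
$$
For the forward inclusion, a normalised $\varphi\colon\A\to\B$ is a graph isomorphism $\cay(G,X)\to\cay(G,X^\varphi)$ on each basic set, hence conjugates each $\aut(\cay(G,X))$ to $\aut(\cay(G,X^\varphi))$; intersecting over $\cS(\A)$ gives $\aut(\B)=\varphi^{-1}\aut(\A)\varphi=\varphi^{-1}A\varphi$ (here $2$-closedness replaces $A^{(2)}$ by $A$), and $G\r\leqslant\aut(\B)$ yields $G\r\leqslant\varphi^{-1}A\varphi$. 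For the converse I would set $B'=\varphi^{-1}A\varphi$ and $\B=V(G,B'_e)$; since $\varphi$ fixes $e$ one has $B'_e=(A_e)^\varphi$, so an orbit computation gives $\cS(\B)=\{X^\varphi:X\in\cS(\A)\}$. Because both $A$ and $B'$ contain the regular group $G\r$, their $2$-orbits are exactly the arc sets $R_X=\{(g,h):hg^{-1}\in X\}$; conjugating by $\varphi$ (acting diagonally and fixing $e$) sends $R_X$ to $R_{X^\varphi}$, which is precisely the statement that $\varphi\colon\cay(G,X)\to\cay(G,X^\varphi)$ is a graph isomorphism for every basic set. Hence $\varphi\in\iso_e(\A,\B)\subseteq\iso_e(\A)$.

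Finally I would deduce (i)$\iff$(iii) from this dictionary together with the identification $N_{\sym(G)}(G\r)=G\r\aut(G)$, whose stabiliser of $e$ is $\aut(G)$. Assuming (i) and taking $\varphi\in\iso_e(\A)$, the relation $G\r\leqslant\varphi^{-1}A\varphi$ reads $(G\r)^{\varphi^{-1}}\leqslant A$, so $G\r\preceq_{G}A$ supplies $\psi\in A$ with $\varphi^{-1}\psi\in N_{\sym(G)}(G\r)=G\r\aut(G)$; rewriting $\varphi$ and using that $\aut(G)$ normalises $G\r\leqslant A$ puts $\varphi=a\sigma$ with $a\in A$, $\sigma\in\aut(G)$, and the condition $e^\varphi=e$ forces $a\in\aut(\A)_e$, which is (iii). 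Conversely, given $\chi$ with $(G\r)^\chi\leqslant A$, I would multiply by a suitable element of $A$ (using transitivity) so that $\varphi$ with $\varphi^{-1}=\chi\psi_0$ fixes $e$, whence $\varphi\in\iso_e(\A)$; applying (iii) to write $\varphi=a\sigma$ lets me read off $\psi\in A$ with $\chi\psi=\sigma^{-1}\in\aut(G)\leqslant N_{\sym(G)}(G\r)$, so $(G\r)^{\chi\psi}=G\r$, giving (i). The main obstacle is the converse half of the dictionary: converting the abstract containment $G\r\leqslant\varphi^{-1}A\varphi$ back into a genuine S-ring isomorphism, where one must simultaneously verify that the images $X^\varphi$ really are the basic sets of a Schur ring and that $\varphi$ respects all the basic Cayley graphs at once — the step where the passage through $2$-orbits, and the alignment $\aut(\A)=A$ forced by $2$-closedness, are indispensable.
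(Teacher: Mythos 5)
Your proof is correct. Note, however, that the paper itself contains no proof of this proposition: it is quoted from Hirasaka--Muzychuk \cite[Theorem~2.6]{HM}, so there is no internal argument to compare yours against; what you have written is essentially the standard argument behind the cited theorem. Your route — splitting off $G\r$ to pass between (ii) and (iii), the dictionary
$$
\iso_e(\A)=\{\varphi\in\sym(G):\ e^\varphi=e,\ G\r\leqslant\varphi^{-1}A\varphi\},
$$
proved by identifying the $2$-orbits of any group in $\Sup(G\r)$ with the arc sets $\{(g,h): hg^{-1}\in X\}$ of the basic Cayley graphs, and finally the holomorph identity $N_{\sym(G)}(G\r)=G\r\aut(G)$ to convert conjugacy of regular subgroups into the factorisation in (iii) — is sound, and each step checks out. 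One small misattribution in your closing remark: $2$-closedness (i.e., $\aut(\A)=A^{(2)}=A$) is what makes the \emph{forward} half of the dictionary work (a normalised isomorphism a priori only gives $G\r\leqslant\varphi^{-1}A^{(2)}\varphi$) and is also used when deducing (i) from (iii) (the element $a\in\aut(\A)_e$ must lie in $A$ to serve as part of $\psi$); the \emph{converse} half of the dictionary, via $2$-orbits, is valid for arbitrary $A\in\Sup(G\r)$. This does not affect the correctness of your argument.
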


An S-ring $\A$ over $G$ is called a \emph{CI-S-ring} (or
\emph{CI} for short) if $\aut(\A) \aut(G)=\iso(\A)$ (see \cite[Definition~3]{HM}).

\begin{prop}
{\rm (\cite[Proposition~2.4]{SM})}
Let $A,\, B \in \Sup(G\r)$ such that $B \leqslant A$, $\A=V(G,A_e)$ and
$\B=V(G,B_e)$. If $B \preceq_{G} \aut(\A)$ and $\B$ is CI, then $\A$ is also CI.
\end{prop}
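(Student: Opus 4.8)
The plan is to reduce the statement to the language of the partial order $\preceq_{G}$ by means of Proposition~\ref{HM}, and then to close the argument by a transitivity-type composition. First I would record the two structural facts that make the reduction legitimate. Since $\A=V(G,A_e)$ and $\B=V(G,B_e)$, we have $\aut(\A)=A^{(2)}$ and $\aut(\B)=B^{(2)}$; and because $B\leqslant A$ the corresponding $2$-closures satisfy $B^{(2)}\leqslant A^{(2)}$, so that
$$
\aut(\B)\leqslant\aut(\A).
$$
Moreover $\aut(\A)$ and $\aut(\B)$ are $2$-closed with $V(G,\aut(\A)_e)=\A$ and $V(G,\aut(\B)_e)=\B$ (same orbitals give the same transitivity module). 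Hence Proposition~\ref{HM}, applied with $\aut(\A)$, respectively $\aut(\B)$, in the role of the $2$-closed group, turns the CI-property into a completeness statement via the equivalence of its parts (i) and (ii): "$\A$ is CI" is equivalent to $G\r\preceq_{G}\aut(\A)$, and "$\B$ is CI" is equivalent to $G\r\preceq_{G}\aut(\B)$. Thus the hypotheses read $B\preceq_{G}\aut(\A)$ and $G\r\preceq_{G}\aut(\B)$, while the goal becomes $G\r\preceq_{G}\aut(\A)$.

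For the main step I would verify $G\r\preceq_{G}\aut(\A)$ directly from the definition. Let $\mu\in\sym(G)$ with $(G\r)^{\mu}\leqslant\aut(\A)$. Applying the hypothesis $B\preceq_{G}\aut(\A)$ furnishes $\nu_1\in\aut(\A)$ with $(G\r)^{\mu\nu_1}\leqslant B$. Since $B\leqslant B^{(2)}=\aut(\B)$, this gives $(G\r)^{\mu\nu_1}\leqslant\aut(\B)$, so applying $G\r\preceq_{G}\aut(\B)$ to the permutation $\mu\nu_1$ produces $\nu_2\in\aut(\B)$ with $(G\r)^{\mu\nu_1\nu_2}\leqslant G\r$, whence $(G\r)^{\mu\nu_1\nu_2}=G\r$ by regularity. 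Setting $\nu=\nu_1\nu_2$, we have $\nu_1\in\aut(\A)$ and $\nu_2\in\aut(\B)\leqslant\aut(\A)$, so $\nu\in\aut(\A)$ and $(G\r)^{\mu\nu}=G\r$. This is precisely the condition $G\r\preceq_{G}\aut(\A)$, which by the reduction above says that $\A$ is CI.

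The one genuinely delicate point to watch is the bookkeeping of quantifiers in the definition of $\preceq_{G}$: the result is \emph{not} a literal instance of transitivity of $\preceq_{G}$, because the hypothesis on $\B$ is $G\r\preceq_{G}\aut(\B)$ rather than $G\r\preceq_{G}B$, so the correcting permutation $\nu_2$ is only guaranteed to lie in $\aut(\B)$. What rescues the composition is the monotonicity $\aut(\B)\leqslant\aut(\A)$ of the $2$-closure, which is exactly what places $\nu_2$ — and hence the composite $\nu=\nu_1\nu_2$ — inside the ambient group $\aut(\A)$. I would therefore state the facts $\aut(\A)=A^{(2)}$ together with $V(G,\aut(\A)_e)=\A$, and $B^{(2)}\leqslant A^{(2)}$, at the very outset, since they underwrite both the Proposition~\ref{HM} reduction and the final membership $\nu\in\aut(\A)$.
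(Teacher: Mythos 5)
Your proof is correct: the reduction of both CI-hypotheses to $\preceq_G$-statements via Proposition~\ref{HM} (using that $\aut(\A)=A^{(2)}$, $\aut(\B)=B^{(2)}$ are $2$-closed with $V(G,\aut(\A)_e)=\A$, $V(G,\B)$ likewise, and that $B\leqslant A$ forces $\aut(\B)\leqslant\aut(\A)$), followed by composing the two correcting permutations $\nu_1\in\aut(\A)$ and $\nu_2\in\aut(\B)\leqslant\aut(\A)$, is exactly the intended argument. The paper itself only cites this statement from \cite[Proposition~2.4]{SM} without reproducing a proof, and your argument matches that source's approach, including the key observation that monotonicity of the $2$-closure is what keeps the composite $\nu_1\nu_2$ inside $\aut(\A)$.
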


This allows us to consider only minimal elements of the poset
$(\Sup(G\r),\preceq_{G})$. The set of such
elements will be denoted by $\Sup^{\min}(G\r)$.

\begin{cor}\label{ci}
If $V(G,A_e)$ is CI for every $A \in \Sup^{\min}(G\r)$, then $G$ is a DCI-group.
\end{cor}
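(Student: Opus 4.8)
The plan is to deduce the DCI-property from the CI-property of the S-rings attached to the $\preceq_G$-minimal overgroups of $G\r$. Recall from the discussion following the definition of the $2$-closure, which combines \eqref{eq:B2} with the fact that $\aut(\cay(G,X))$ is always $2$-closed, that $G$ is a DCI-group as soon as $G\r \preceq_G A$ holds for \emph{every} $2$-closed $A \in \Sup(G\r)$. So I fix an arbitrary $2$-closed group $A \in \Sup(G\r)$ and aim to prove $G\r \preceq_G A$.

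First I would locate a minimal overgroup sitting below $A$. Since $G$ is finite, $\Sup(G\r)$ is a finite poset under $\preceq_G$, and the principal down-set $\{C \in \Sup(G\r) : C \preceq_G A\}$ is a nonempty finite set (it contains $A$ by reflexivity). Pick $B$ minimal in this down-set. Transitivity of $\preceq_G$ then forces $B$ to be minimal in all of $\Sup(G\r)$: if $C \preceq_G B$ for some $C \in \Sup(G\r)$, then $C \preceq_G A$, so $C$ lies in the down-set, and minimality of $B$ gives $C=B$. Hence $B \in \Sup^{\min}(G\r)$, and by the very definition of $\preceq_G$ the relation $B \preceq_G A$ entails the subgroup inclusion $B \leqslant A$.

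Now set $\A=V(G,A_e)$ and $\B=V(G,B_e)$. By hypothesis $\B$ is CI, because $B \in \Sup^{\min}(G\r)$. The key observation is that $A$ being $2$-closed yields $\aut(\A)=A^{(2)}=A$, so that $B \preceq_G \aut(\A)$. With $B \leqslant A$, $B \preceq_G \aut(\A)$ and $\B$ CI in hand, \cite[Proposition~2.4]{SM} applies and shows that $\A$ is CI as well. Finally I invoke Proposition~\ref{HM}: as $A$ is $2$-closed and $\A=V(G,A_e)$, the CI-property of $\A$ (statement (ii)) is equivalent to $G\r \preceq_G A$ (statement (i)). Thus $G\r \preceq_G A$, and since $A$ was an arbitrary $2$-closed overgroup of $G\r$, the group $G$ is a DCI-group.

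The computations here are essentially bookkeeping; the one step deserving care is the extraction of a genuinely $\preceq_G$-minimal element below $A$, where one must check that minimality inside the down-set upgrades to global minimality in $\Sup(G\r)$ -- this is exactly where transitivity of the partial order is used. The conceptual heart, rather than a real obstacle, is the translation between the order-theoretic statement $G\r \preceq_G A$ and the CI-property of $\A$, which is legitimate precisely because $A$ is $2$-closed, forcing $\aut(\A)=A$.
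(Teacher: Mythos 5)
Your proof is correct and is essentially the argument the paper intends: the corollary is stated without proof precisely because it follows by combining the reduction to $2$-closed groups via \eqref{eq:B2}, the identity $\aut(\A)=A^{(2)}=A$, Proposition~\ref{HM}, and the cited \cite[Proposition~2.4]{SM}, exactly as you do. Your extraction of a globally $\preceq_G$-minimal element $B$ below $A$ (via finiteness and transitivity of the partial order) is the only detail the paper leaves tacit, and you handle it correctly.
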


In fact, we are going to derive 
Theorems~\ref{main1} and \ref{main2} by
showing that the condition in Corollary~\ref{ci} holds whenever $G$ is one of
the groups in the cited theorems.
\medskip

We conclude the section with a useful lemma.

\begin{lem}\label{sylow}
Let $G$ be an abelian group, $A \in \Sup^{\min}(G\r)$ and $\A=V(G,A_e)$.
Suppose that there exist
$\A$-subgroups $L < U \leqslant G$
such that $|U/L|=np^t$ for a prime $p$ and $1 < n < p$.
Then $LU_p$ is an $\A$-subgroup, where $U_p$ is the
Sylow $p$-subgroup of $U$.
\end{lem}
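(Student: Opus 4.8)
The plan is to work inside the $\A$-section $S=U/L$ and to prove that the unique Sylow $p$-subgroup $P$ of the abelian group $S$ is an $\A_S$-subgroup. Since $1<n<p$ gives $\gcd(n,p)=1$, the subgroup $LU_p$ lies between $L$ and $U$ and its image under $\pi_{U/L}$ is precisely $P$; by the standard correspondence between $\A$-subgroups $H$ with $L\leqslant H\leqslant U$ and $\A_S$-subgroups $H/L$ of $S$, it suffices to show that $P$ is an $\A_S$-subgroup. I would use two facts about $\A_S=V(S,(A^S)_{e_S})$: it is schurian (as recalled before the lemma), and, crucially, that minimality is inherited by sections, so that $A^S\in\Sup^{\min}(S\r)$; I would isolate the latter from the $\preceq$-machinery set up earlier.

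Put $T=A^S$ and let $Q$ be a Sylow $p$-subgroup of $T$ containing $\rho_S(P)$. Every $Q$-orbit is a union of $\rho_S(P)$-orbits, and the $\rho_S(P)$-orbits are exactly the $n$ cosets of $P$, each of size $p^t$. As each $Q$-orbit has $p$-power size, the number of $P$-cosets it contains is a power of $p$; these numbers sum to $n<p$, hence each equals $1$. Therefore the orbit partition $\Lambda$ of $Q$ is precisely the partition of $S$ into $P$-cosets. By the description of $\A_S$-subgroups in terms of $T$-invariant coset partitions, $P$ is an $\A_S$-subgroup if and only if $\Lambda$ is $T$-invariant, i.e.\ $B:=\{g\in T:\Lambda^{g}=\Lambda\}$ equals $T$.

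It remains to prove $B=T$, and for this I would show $B\preceq_{S}T$ and appeal to the minimality of $A^S$. Clearly $S\r\leqslant B$ (right translations permute the $P$-cosets) and $Q\leqslant B$, so $B$ is transitive and $[T:B]$ is coprime to $p$. To check the completeness condition, let $\varphi$ satisfy $(S\r)^{\varphi}\leqslant T$ and set $K=(S\r)^{\varphi}$, a regular subgroup of $T$ isomorphic to $S$. Its Sylow $p$-subgroup $K_p$ has order $p^t$, is normal in the abelian group $K$, and acts semiregularly, so its orbit partition $\Lambda_K$ has $n$ blocks of size $p^t$ and is $K$-invariant. Embedding $K_p$ in a Sylow $p$-subgroup $Q'$ of $T$ and repeating the counting step (again using $n<p$) shows that $\Lambda_K$ is the orbit partition of $Q'$; writing $Q'=Q^{\psi}$ with $\psi\in T$ we obtain $\Lambda_K=\Lambda^{\psi}$, whence $K\leqslant B^{\psi}$ and $(S\r)^{\varphi\psi^{-1}}=K^{\psi^{-1}}\leqslant B$. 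This is exactly $B\preceq_{S}T$, so $B=T$, and consequently $LU_p$ is an $\A$-subgroup.

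The Sylow combinatorics above is forced by the inequality $n<p$ and should be routine; the delicate point is the interface with minimality, namely the passage $A\in\Sup^{\min}(G\r)\Rightarrow A^S\in\Sup^{\min}(S\r)$. If a ready statement of this descent were not at hand, I would instead argue globally: take $B$ to be the stabiliser in $A$ of the partition of $G$ into $LU_p$-cosets, and verify $B\preceq_{G}A$ by transporting, for each regular $K=(G\r)^{\varphi}\leqslant A$, the Sylow $p$-part of the corresponding section subgroup of $K$ into the $P$-coset partition; this reproduces the same Sylow computation inside $A^S$, but then requires lifting the conjugating element from $A^S$ back to $A$, which I expect to be the main technical obstacle.
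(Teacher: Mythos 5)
Your core Sylow computation (the orbits of a $p$-group containing the coset-translation $p$-subgroup must coincide with the coset partition, because the number $n$ of cosets is smaller than $p$) is exactly the engine of the paper's proof, and your deduction of $B\preceq_S T$ from it is sound. The proof breaks at the step you yourself flag as crucial: the claim that minimality is inherited by sections, i.e.\ that $A\in\Sup^{\min}(G\r)$ implies $A^S\in\Sup^{\min}(S\r)$. This is not part of the $\preceq$-machinery, it is nowhere proved in the paper, and it cannot be extracted from it. There are two structural obstructions. First, $A^S$ is induced by the setwise stabiliser $A_{\{U\}}$, and neither generation nor conjugacy information passes from $A$ to a setwise stabiliser: minimality gives $A=\langle F_1,\dots,F_k\rangle$ for representatives $F_i$ of the conjugacy classes of regular subgroups isomorphic to $G$, but gives no control over $A_{\{U\}}$. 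Second, $\preceq_S$ quantifies over regular subgroups of $A^S$ isomorphic to $S$, and these do not correspond to regular subgroups of $A$ isomorphic to $G$: the group induced on $S$ by a regular $F\leqslant A$ with $F\cong G$ is isomorphic to a section of $F$ which need not be isomorphic to $S=U/L$, and conversely a regular subgroup of $A^S$ isomorphic to $S$ need not be induced by any regular subgroup of $A$. The paper's proof is engineered precisely to avoid this descent: minimality is applied only over $G$ itself, namely to conclude $A=\langle F_1,F_2^{\gamma_2},\dots,F_k^{\gamma_k}\rangle$ for \emph{any} choice of $\gamma_i\in A$, and this generation is then projected to the quotient $G/L$ (not to the section $S$), where the Sylow argument is run with the images $\overline{F_i}^{\delta_i}$, which are only required to be regular and abelian, not isomorphic to $G/L$.

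Your fallback also fails, but for a different reason than the one you predict. Lifting the conjugating element is the trivial part: every element of $A^S$ (resp.\ of $A^{G/L}$) has a preimage in $A_{\{U\}}$ (resp.\ in $A$), and by minimality the conjugated representatives still generate $A$. The real gap is that a Sylow computation carried out inside $A^S$ only controls the $LU_p$-cosets lying inside $U$: after conjugation you know that the transported regular subgroup permutes the $LU_p$-cosets contained in $U$, but membership in your global $B$ requires it to permute \emph{all} $LU_p$-cosets of $G$, and the section sees nothing outside $U$. This is exactly why the paper works in $G/L$: there the $U/L$-cosets tile the whole set, the relevant $p$-subgroups $P_1$ and $P_i^{\delta_i}$ stabilise every $U/L$-coset setwise (they lie inside block stabilisers of regular abelian groups) and are contained in a common Sylow $p$-subgroup, so the $n<p$ counting can be run on every $U/L$-coset; this shows the orbit partition of each $P_i^{\delta_i}$ is the full $LU_p/L$-coset partition of $G/L$, whence invariance under each $\overline{F_i}^{\delta_i}$ (which normalises its own $P_i^{\delta_i}$) and therefore under the generated group $\overline{A}$, which is what $\underline{LU_p}\in\A$ requires.
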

\begin{proof}
Let $F_1:=G\r,\, F_2, \ldots, F_k$ be a complete set of representatives of the
conjugacy classes of regular subgroups of $A$ isomorphic to $G$.
Then $A=\langle F_1, \ldots, F_k \rangle$ because
of $A \in \Sup^{\min}(G\r)$.
For an easier notation, write $\overline{B}$ for $B^{G/L}$, where
$B \leqslant A$ is any subgroup, and $\bar{e}$ for the identity element of $G/L$. Furthermore,
denote by $\overline{B}_{\, \{U/L\}}$ the setwise stabiliser of
$U/L$ in $\overline{B}$.

For $1 \leqslant i \leqslant k$, let $P_i$ be the Sylow $p$-subgroup of
$(\overline{F_i})_{\{U/L\}}$ and $P$ be a Sylow $p$-subgroup of $\overline{A}_{\, \{U/L\}}$ such that $P_1 \leqslant P$.
It follows from $|U/L|=np^t$ and $n < p$ that a Sylow $p$-subgroup of
$\sym(U/L)$ has $n$ orbits, each containing $p^t$ elements
(see \cite[Example~2.6.1]{DM}). On the other hand, acting on $U/L$, the orbits under $P_1$ are equal to the cosets of $LU_p/L$ in $U/L$, and therefore,
$\orb(P_1,U/L)=\orb(P,U/L)$.

Fix $i$, $2 \leqslant i \leqslant k$.
By Sylow's theorem $P_i^{\delta_i} \leqslant P$ for some
$\delta_i \in \overline{A}_{\, \{U/L\}}$.
Using also that $\overline{F_i}$ is abelian, we find that the partition of $U/L$ into its $LU_p/L$-cosets
is $\overline{F_i}^{\delta_i}$-invariant. Thus it is also
$D$-invariant for $D:=\langle \overline{F_1}, \overline{F_2}^{\delta_2},
\ldots,\overline{F_k}^{\delta_k}\rangle$.
In other words, $\underline{LU_p/L} \in V(G/L,D_{\bar{e}})$.

Let $\gamma_i$ be a preimage of $\delta_i$ under the epimorphism $A \to
\overline{A}$.
Then
$$
\A=V(G,A_e)=V(G,\langle  F_1,\ldots,F_k \rangle_e)=
V(G,\langle F_1,F_2^{\gamma_2},\ldots, F_k^{\gamma_k} \rangle_e),
$$
and so
$$
\A_{G/L}=V\big(G/L,
\overline{\langle F_1,F_2^{\gamma_2}, \ldots, F_k^{\gamma_k}
\rangle}_{\bar{e}})=
V\big(G/L,\langle \overline{F_1}, \overline{F_2}^{\delta_2}, \ldots,
\overline{F_k}^{\delta_k} \rangle_{\bar{e}})=V(G/L,D_{\bar{e}}).
$$
This shows that $\underline{LU_p/L} \in \A_{G/L}$, implying that
$\underline{LU_p} \in \A$.
\end{proof}
%--------------------------------------------------------------------------------------------------------------%
\section{Products of CI-S-rings}\label{sec:S-r3}
In this section we review the star and the
generalised wreath product of S-rings. The former was
introduced by Hirasaka and Muzychuk~\cite{HM} and the latter by
Evdokimov and Ponomarenko~\cite{EP} and independently by
Leung and Man~\cite{LM} under the name  wedge product.
%-------------------------------------------------------------------------------------------------------------%
\subsection{Star product}
Let $\A$ be an S-ring over a group $G$ and
$V, W \leqslant G$ be two $\A$-subgroups. The S-ring
$\A$ is the \emph{star product} of $\A_V$ with $\A_W$,
written as $\A=\A_V \star \A_W$, if
\begin{enumerate}[{\rm (1)}]
\setlength{\itemsep}{0.4\baselineskip}
\item $V \cap W \lhd W$,
\item every $X \in \cS(\A)$, $X \subseteq W \setminus V$ is a union of some
$(V \cap W)$-cosets,
\item for every $X \in \cS(\A)$ with $X \subseteq G\setminus
(V \cup W)$, there exist basic sets $Y, Z \in \cS(\A)$
such that $X=Y Z$, $Y \subseteq V$ and $Z \subseteq W$.
\end{enumerate}
\smallskip

The star product is \emph{non-trivial} if
$V\neq \{e\}, G$.  In the special case when
$V \cap W=\{e\}$ it is also called the \emph{tensor product} and written
as $\A_V \otimes \A_W$.

\begin{prop}\label{KM1}
{\rm (cf. \cite[Proposition~3.2~and~Theorem~4.1]{KM})}
Let $G$ be a direct product of elementary abelian groups, $A \in \Sup(G\r)$ and
$\A=V(G,A_e)$. If $\A=\A_V \star \A_W$ and both S-rings
$\A_V$ and $\A_{W/(V \cap W)}$ are CI, then $\A$ is also CI.
\end{prop}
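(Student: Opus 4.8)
The plan is to verify the defining equality $\iso(\A)=\aut(\A)\aut(G)$ directly. The inclusion $\supseteq$ is clear, and since any isomorphism becomes normalised after multiplication by a suitable element of $G\r\le\aut(\A)$ while $\aut(G)$ normalises $G\r$, it suffices to show that an arbitrary normalised $\varphi\in\iso_e(\A)$, say $\varphi\colon\A\to\B$ with $\B$ an S-ring over $G$, lies in $\aut(\A)_e\aut(G)$. First I would transfer the star decomposition across $\varphi$. By Proposition~\ref{iso}(i) the images $V^\varphi$ and $W^\varphi$ are $\B$-subgroups and $\varphi$ restricts to isomorphisms $\A_V\to\B_{V^\varphi}$ and $\A_W\to\B_{W^\varphi}$; moreover $(V\cap W)^\varphi=V^\varphi\cap W^\varphi$. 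Because a normalised isomorphism sends basic sets to basic sets, preserves unions of cosets of an $\A$-subgroup (Proposition~\ref{iso}(ii)) and respects products of basic sets via \eqref{eq:XiXj}, each of the three defining conditions of the star product carries over to $\B$ (condition (1) being automatic as $G$ is abelian), so $\B=\B_{V^\varphi}\star\B_{W^\varphi}$.

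Next I would use $\aut(G)$ to move the subgroups back into place. Since $G$ is a direct product of elementary abelian groups, each Sylow subgroup is a vector space and a pair of subgroups is determined up to $\aut(G)$ by the triple of orders $(|V|,|W|,|V\cap W|)$. As $\varphi$ preserves these orders, there is $\psi_0\in\aut(G)$ carrying $(V^\varphi,W^\varphi)$ to $(V,W)$; replacing $\varphi$ by $\varphi\psi_0$, which does not affect membership in $\aut(\A)_e\aut(G)$, I may assume $V^\varphi=V$, $W^\varphi=W$ and $(V\cap W)^\varphi=V\cap W=:K$. Now the CI hypotheses apply: the restriction $\varphi_V\in\iso_e(\A_V)$ factors as $\varphi_V=\alpha_V\beta_V$ with $\alpha_V\in\aut(\A_V)_e$ and $\beta_V\in\aut(V)$, and, writing $S=W/K$, the induced map $\varphi^{S}\in\iso_e(\A_S)$ (well defined by Proposition~\ref{iso}(iii)) factors as $\varphi^{S}=\alpha_S\beta_S$ with $\alpha_S\in\aut(\A_S)_e$ and $\beta_S\in\aut(S)$.

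The crux is to assemble $\beta_V$ and $\beta_S$ into a single $\psi\in\aut(G)$. Here I would first observe that $K$ is an $\A_V$-subgroup and that $\alpha_V$, being an $e$-fixing S-ring automorphism, fixes every basic set of $\A_V$ setwise and hence fixes $K$; combined with $K^{\varphi_V}=K$ this gives $\beta_V(K)=K$, which is exactly the compatibility over the overlap that makes the gluing possible. Using the elementary abelian structure I would choose a decomposition $G=K\oplus C_V\oplus C_W\oplus C_0$ with $V=K\oplus C_V$ and $W=K\oplus C_W$, and define $\psi$ to agree with $\beta_V$ on $V$, to lift $\beta_S$ on $C_W$, and to be the identity on $C_0$; the resulting map is block-triangular with invertible diagonal blocks, hence $\psi\in\aut(G)$, and it restricts to $\beta_V$ on $V$ while inducing $\beta_S$ on $S$.

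Finally I would set $\sigma:=\varphi\psi^{-1}$ and check that $\sigma$ fixes every basic set of $\A$, which forces $\sigma\in\aut(\A)_e$ and yields $\varphi=\sigma\psi\in\aut(\A)_e\aut(G)$. Since $V$ is an $\A$-subgroup, each basic set inside $W$ lies either in $V$ or in $W\setminus V$. For $X\subseteq V$ one computes $X^{\sigma}=X^{\alpha_V\beta_V\beta_V^{-1}}=X^{\alpha_V}=X$. For $X\subseteq W\setminus V$, condition (2) of the star product makes $X$ a union of $K$-cosets, and the map induced by $\sigma$ on $S$ equals $\alpha_S\beta_S\beta_S^{-1}=\alpha_S$, so again $X^{\sigma}=X$. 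For $X\subseteq G\setminus(V\cup W)$, condition (3) gives $X=YZ$ with basic sets $Y\subseteq V$, $Z\subseteq W$, and \eqref{eq:XiXj} yields $X^{\sigma}=Y^{\sigma}Z^{\sigma}=YZ=X$. I expect the main obstacle to be precisely the construction in the third paragraph: producing one group automorphism that simultaneously realises the base and the quotient automorphisms in a way compatible over $K$, which is where both the elementary abelian hypothesis on $G$ and the factorisation axiom (3) of the star product are indispensable.
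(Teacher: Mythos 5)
You should know at the outset that the paper contains no proof of Proposition~\ref{KM1}: it is imported from \cite[Proposition~3.2~and~Theorem~4.1]{KM}, so the only possible comparison is with that external source, not with an in-paper argument. Judged on its own terms, your proof is correct, and it is the kind of argument the citation points to. The reduction to normalised isomorphisms, the transfer of the star decomposition to $\B=\B_{V^\varphi}\star\B_{W^\varphi}$ via Proposition~\ref{iso} and \eqref{eq:XiXj}, the normalisation $V^\varphi=V$, $W^\varphi=W$ by some $\psi_0\in\aut(G)$, the factorisations $\varphi_V=\alpha_V\beta_V$ and $\varphi^S=\alpha_S\beta_S$ supplied by the two CI hypotheses, the gluing of $\beta_V$ and $\beta_S$ into one $\psi\in\aut(G)$, and the three-case check that $\sigma=\varphi\psi^{-1}$ fixes every basic set all go through. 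You also deploy the hypothesis that every Sylow subgroup of $G$ is elementary abelian exactly where it is genuinely needed: once to classify the pair $(V^\varphi,W^\varphi)$ up to $\aut(G)$ by the orders $|V|$, $|W|$, $|V\cap W|$ (prime by prime, this is the linear-algebra fact about pairs of subspaces), and once to produce the complements $C_V$, $C_W$, $C_0$ used in building $\psi$; the only compatibility needed for the gluing is $K^{\beta_V}=K$, which you correctly extract from $K^{\varphi_V}=K$ and $K^{\alpha_V}=K$.

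Two assertions you make without argument deserve a line each, though both are routine. First, $(V\cap W)^\varphi=V^\varphi\cap W^\varphi$: the inclusion $\subseteq$ is trivial, but for the reverse you must apply Proposition~\ref{iso}(i) to $\varphi^{-1}$, which carries the $\B$-subgroup $V^\varphi\cap W^\varphi$ to an $\A$-subgroup contained in $V\cap W$. Second, the pair of facts that an $e$-fixing element of $\aut(\A_V)$ or $\aut(\A_S)$ fixes every basic set setwise (this is what gives $K^{\alpha_V}=K$ and $\bigl(X^{\pi_{W/K}}\bigr)^{\alpha_S}=X^{\pi_{W/K}}$), and, conversely, that a normalised isomorphism of $\A$ fixing every basic set setwise is an automorphism of each $\cay(G,X)$ and hence lies in $\aut(\A)_e$; both follow by looking at the out-neighbourhood of $e$ in $\cay(G,X)$, and the converse direction is precisely what turns your final computation into $\sigma\in\aut(\A)_e$ and $\varphi=\sigma\psi\in\aut(\A)_e\aut(G)$. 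A noteworthy by-product of your route is that schurianity of $\A$ (the hypothesis $\A=V(G,A_e)$) is never used: your argument works for any S-ring over such a group $G$, directly from the definition of a CI-S-ring.
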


\begin{cor}\label{ci-tensor}
In particular, if $\A=\A_V \otimes \A_W$ and both S-rings
$\A_V$ and $\A_W$ are CI, then $\A$ is also CI.
\end{cor}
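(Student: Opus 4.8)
The plan is to recognise that the tensor product is nothing but the degenerate case of the star product in which the two $\A$-subgroups intersect trivially, so that Corollary~\ref{ci-tensor} is an immediate specialisation of Proposition~\ref{KM1}. Since the corollary is stated under the standing hypotheses of that proposition, I may assume that $G$ is a direct product of elementary abelian groups, that $A \in \Sup(G\r)$, and that $\A=V(G,A_e)$.

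First I would invoke the definition of the tensor product given just above: to say $\A=\A_V \otimes \A_W$ is by definition to say that $\A=\A_V \star \A_W$ and $V \cap W=\{e\}$. Thus all three conditions (1)--(3) defining a star product hold automatically, and in particular $\A$ admits a star product decomposition with respect to the pair $V, W$. This lets me apply Proposition~\ref{KM1}, provided I can match its CI-hypotheses with the ones granted to us.

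The only point requiring a remark is the identification of the section $W/(V \cap W)$ appearing in Proposition~\ref{KM1} with the subgroup $W$ appearing here. Since $V \cap W=\{e\}$, the canonical epimorphism $\pi_{W/(V\cap W)}$ is an isomorphism of $W$ onto $W/\{e\}$, and under this isomorphism the S-ring $\A_{W/(V\cap W)}$ corresponds exactly to $\A_W$ (the basic sets are carried to one another, so $\iso$, $\aut$, and $\aut(W)$ correspond as well). Consequently $\A_{W/(V\cap W)}$ is a CI-S-ring if and only if $\A_W$ is. By assumption both $\A_V$ and $\A_W$ are CI, so the hypothesis ``$\A_V$ and $\A_{W/(V\cap W)}$ are CI'' of Proposition~\ref{KM1} is satisfied, and that proposition yields that $\A$ is CI.

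I expect no genuine obstacle here: the entire content is the bookkeeping that the trivial quotient $W/\{e\}$ carries $\A_{W/(V\cap W)}$ to $\A_W$ in a CI-preserving way. The one thing to be stated carefully, rather than proved with effort, is this canonical identification, after which the conclusion is a direct citation of Proposition~\ref{KM1}.
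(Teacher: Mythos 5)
Your proposal is correct and follows exactly the route the paper intends: the paper offers no separate proof of Corollary~\ref{ci-tensor}, treating it as an immediate specialisation of Proposition~\ref{KM1} to the case $V \cap W=\{e\}$, which is precisely your argument. Your explicit remark that the canonical identification $W/\{e\} \cong W$ carries $\A_{W/(V\cap W)}$ to $\A_W$ and preserves the CI-property is the only bookkeeping step, and it is handled correctly.
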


\begin{prop}\label{EKP}
{\rm (\cite[Lemma~2.3.(2)]{EKP})}
Let $G$ be an abelian group and $\A$ be an S-ring over $G$.
Suppose that $G=H_1\times H_2$ with
$\A$-subgroups $H_1,\, H_2$. Then
$\A \supseteq \A_{H_1} \otimes \A_{H_2}$, and the equality is attained
whenever $\A_{H_1} = \Z H_1$ or $\A_{H_2} = \Z H_2$.
\end{prop}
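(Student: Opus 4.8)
The plan is to establish the inclusion directly from the ring structure and then upgrade it to an equality by a partition-refinement argument. For the inclusion $\A \supseteq \A_{H_1} \otimes \A_{H_2}$, I would note that the tensor product is spanned by the elements $\underline{XY}$ with $X \in \cS(\A_{H_1})$ and $Y \in \cS(\A_{H_2})$, so it suffices to place each such element in $\A$. Since $H_1, H_2$ are $\A$-subgroups we have $\A_{H_1}, \A_{H_2} \subseteq \A$, hence $\underline{X}, \underline{Y} \in \A$ and, $\A$ being a ring, $\underline{X}\,\underline{Y} \in \A$. Because $G = H_1 \times H_2$, the map $(x,y) \mapsto xy$ is a bijection of $X \times Y$ onto $XY$, so $\underline{X}\,\underline{Y} = \underline{XY} \in \A$, which gives the inclusion.

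For the equality I would assume, by symmetry, that $\A_{H_1} = \Z H_1$; equivalently, every singleton $\{h\}$ with $h \in H_1$ is a basic set of $\A$. The goal is to show that the partitions $\cS(\A)$ and $\cS(\A_{H_1} \otimes \A_{H_2})$ coincide. The inclusion just proved already shows that every basic set of the tensor product is a union of basic sets of $\A$, i.e. $\cS(\A)$ refines $\cS(\A_{H_1} \otimes \A_{H_2})$; since two partitions each refining the other are equal, it remains to prove the opposite refinement.

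The crux is the claim that every $T \in \cS(\A)$ lies in a single coset of $H_2$. To see this I would pass to the quotient: as $H_2$ is an $\A$-subgroup, $\A_{G/H_2}$ is an S-ring over $G/H_2$ whose basic sets are the images $\pi_{G/H_2}(X)$, $X \in \cS(\A)$, each a single basic set. Since $G = H_1 \times H_2$, every coset of $H_2$ equals $hH_2$ for a unique $h \in H_1$, and the basic set $\{h\}$ maps onto the singleton $\{hH_2\}$; hence all singletons of $G/H_2$ are basic and $\A_{G/H_2} = \Z(G/H_2)$. Therefore $\pi_{G/H_2}(T)$ is a single element $hH_2$, so $T \subseteq hH_2$ for some $h \in H_1$. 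Using that $\{h^{-1}\}$ is then a basic set, $\underline{h^{-1}T} = \underline{\{h^{-1}\}}\,\underline{T} \in \A$ is supported on $H_2$, so $h^{-1}T$ is an $\A_{H_2}$-set and decomposes as $\bigcup_j Y_j$ with $Y_j \in \cS(\A_{H_2})$. Consequently $T = \bigcup_j hY_j$ is a union of basic sets $\{h\}Y_j$ of $\A_{H_1} \otimes \A_{H_2}$, supplying the missing refinement and hence the equality.

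I expect the main obstacle to be the coset-confinement claim of the third paragraph, namely identifying $\A_{G/H_2}$ with the full group ring $\Z(G/H_2)$ and legitimately reading off from it that each basic set of $\A$ meets only one $H_2$-coset; once this is secured, the remaining steps are formal manipulations of basic sets together with the mutual-refinement conclusion.
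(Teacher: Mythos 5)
Your proof is correct. Note that the paper does not prove this proposition at all --- it is imported verbatim from \cite[Lemma~2.3.(2)]{EKP} --- so there is no in-paper argument to compare against; your proposal supplies a self-contained proof, and a sound one. The inclusion is exactly right: since $G=H_1\times H_2$, the multiplication map $X\times Y\to XY$ is injective, so $\underline{X}\,\underline{Y}=\underline{XY}$, and the ring property of $\A$ does the rest. For the equality, your quotient argument is the key step and it works: because every element of $G/H_2$ is $hH_2$ for a unique $h\in H_1$ and each $\{h\}$ is a basic set of $\A$ when $\A_{H_1}=\Z H_1$, all singletons of $G/H_2$ are basic, so $\A_{G/H_2}=\Z(G/H_2)$; hence each $T\in\cS(\A)$ has singleton image, i.e.\ $T\subseteq hH_2$, and translating by the basic set $\{h^{-1}\}$ shows $h^{-1}T$ is an $\A_{H_2}$-set, exhibiting $T$ as a union of basic sets $\{h\}Y_j$ of the tensor product. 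The only ingredient you use without justification is the standard fact that the basic sets of $\A_{G/H_2}$ are precisely the images of the basic sets of $\A$ (equivalently, two such images either coincide or are disjoint). This is classical and the paper itself treats it as known in Section~2, so it is a legitimate black box; if you want full self-containment, add the short verification: if $\pi(X)\cap\pi(Y)\neq\emptyset$ for $X,Y\in\cS(\A)$, then $Y\cap XH_2\neq\emptyset$, and since $XH_2$ is an $\A$-set this forces $Y\subseteq XH_2$, i.e.\ $\pi(Y)\subseteq\pi(X)$, and symmetrically $\pi(X)\subseteq\pi(Y)$.
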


\begin{lem}\label{ci-complementary}
Let $G$ be an abelian group,
$A \in Sup^{min}(G_R)$ and $\A=V(G,A_e)$.
Suppose that $G=H_1\times H_2$ with $\A$-subgroups $H_1,\, H_2$.
Then $\A=\A_{H_1} \otimes \A_{H_2}$. If $\A_{H_1}$ and $\A_{H_2}$ are CI, then
$\A$ is also CI.
\end{lem}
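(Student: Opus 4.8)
Lemma~\ref{ci-complementary} has two assertions, and I would treat them in sequence. The first asserts that the decomposition $G = H_1 \times H_2$ into $\A$-subgroups forces $\A = \A_{H_1} \otimes \A_{H_2}$; the second asserts that CI-ness of the two factors lifts to $\A$ once we are in this tensor situation. The plan is to derive the first assertion from the minimality hypothesis $A \in \Sup^{\min}(G\r)$ by invoking Lemma~\ref{sylow} and Proposition~\ref{EKP}, and then to obtain the second assertion as an immediate consequence of Corollary~\ref{ci-tensor}.

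Let me sketch the first step, which I expect to be the heart of the matter. By Proposition~\ref{EKP} we always have $\A \supseteq \A_{H_1} \otimes \A_{H_2}$, with equality guaranteed whenever $\A_{H_1} = \Z H_1$ or $\A_{H_2} = \Z H_2$. So it suffices to show that at least one of the factors is the full group ring. The natural approach is to argue that if neither $\A_{H_1}$ nor $\A_{H_2}$ is the full group ring, then $\A$ admits a nontrivial $\A$-subgroup inside one of the $H_i$ whose structure contradicts the minimality of $A$. Concretely, I would look at the order $|G| = |H_1|\,|H_2|$ and try to find a prime $p$ and an $\A$-section $U/L$ with $|U/L| = np^t$, $1 < n < p$, so that Lemma~\ref{sylow} applies and produces a further $\A$-subgroup $LU_p$. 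The goal is to show that the presence of proper nontrivial S-ring structure on \emph{both} factors would let us split off a Sylow $p$-subgroup in a way incompatible with the factors being as they are; pushing this to a contradiction (or directly to the conclusion that one factor must be $\Z H_i$) is the delicate part. One must be careful that the hypotheses of Lemma~\ref{sylow} are genuinely met — in particular that the relevant section has the mixed order $np^t$ with $1 < n < p$ — which may require choosing $L$ and $U$ from among the $\A$-subgroups supplied by the coprimality or structural assumptions on $G$ coming from the two main theorems.

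The main obstacle, as I see it, is precisely engineering the section $U/L$ to which Lemma~\ref{sylow} applies: the lemma needs a composite order $np^t$ with $1 < n < p$, and it is not obvious a priori that the decomposition $G = H_1 \times H_2$ hands us such a section for free. In the applications $G$ is a product of elementary abelian groups of coprime (or prime-power) orders, so I would exploit the specific orders of $H_1$ and $H_2$ to locate the right prime $p$ and build the section; the argument likely proceeds by contradiction, assuming both $\A_{H_1} \neq \Z H_1$ and $\A_{H_2} \neq \Z H_2$, extracting proper $\A$-subgroups from each, and combining them to manufacture the forbidden configuration. Once equality $\A = \A_{H_1} \otimes \A_{H_2}$ is established, the rest is painless.

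For the second assertion I would simply invoke Corollary~\ref{ci-tensor}: we have just shown $\A = \A_{H_1} \otimes \A_{H_2}$, and by hypothesis both $\A_{H_1}$ and $\A_{H_2}$ are CI, so the corollary yields that $\A$ is CI. Thus the only real content is the first half, and the writeup should concentrate its effort on the Sylow-splitting argument via Lemma~\ref{sylow}, with the CI-lifting relegated to a one-line appeal to the tensor-product corollary.
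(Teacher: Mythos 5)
Your reduction of the second assertion to Corollary~\ref{ci-tensor} matches the paper, but the route you propose for the first (and main) assertion has a fatal flaw: you aim to show that $\A_{H_1}=\Z H_1$ or $\A_{H_2}=\Z H_2$ and then invoke Proposition~\ref{EKP}. That condition is sufficient for the equality $\A=\A_{H_1}\otimes\A_{H_2}$, but it is far from necessary, and under the hypotheses of the lemma it simply need not hold. For a minimal group $A \in \Sup^{\min}(G\r)$ both induced S-rings can be proper: by Proposition~\ref{KM2} and \eqref{eq:p2-p}, a factor isomorphic to $C_p^2$ may carry the S-ring $\Z C_p \wr \Z C_p$, and the paper's own case analysis (Section~\ref{sec:proof1}, and throughout Section~\ref{sec:proof2}) has to contend with exactly this possibility. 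So any argument whose goal is to rule out ``both factors proper'' --- whether via Lemma~\ref{sylow} or otherwise --- is attempting to prove a false statement; this is why you could not carry out the ``delicate part,'' and no choice of section $U/L$ will repair it. (Lemma~\ref{sylow} is also numerically inapplicable in general: it needs an $\A$-section of order $np^t$ with $1<n<p$, which the bare hypotheses of the lemma do not supply.)

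The paper's proof is group-theoretic rather than ring-theoretic and does not pass through Proposition~\ref{EKP} at all. Let $K_i$ be the kernel of the action of $A$ on the set of $H_i$-cosets ($i=1,2$); these subgroups are normal in $A$ and intersect trivially because $H_1 \cap H_2=\{e\}$. Any regular abelian subgroup $F \leqslant A$ satisfies $F=(F\cap K_1)\times(F\cap K_2) \leqslant K_1K_2$; hence $K_1K_2 \preceq_{G} A$, and the $\preceq_{G}$-minimality of $A$ forces $A=K_1K_2$. Then $A$, acting on $G=H_1H_2$, is permutation isomorphic to the direct product $K_1^{H_1} \times K_2^{H_2}$ acting on $H_1 \times H_2$, and it is this product structure of the permutation group --- not triviality of one tensor factor --- that yields $\A=\A_{H_1}\otimes\A_{H_2}$. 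The tensor decomposition is thus a consequence of how minimality constrains $A$ itself, a mechanism your proposal never engages.
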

\begin{proof}
Let $K_i$ be the kernel of the action of $A$ on the set of $H_i$-cosets where $i=1,2$.  The groups $K_1,\, K_2$ are normal in $A$ and intersect trivially
because $H_1 \cap H_2=\{e\}$.
Pick a regular abelian subgroup $F \leqslant A$.
Then $F=(F \cap K_1) \times (F \cap K_2) \leqslant K_1K_2$. Therefore, any regular abelian subgroup of $A$ is contained in $K_1K_2$, implying that $K_1K_2 \preceq_{G} A$. By
$\preceq_{G}$-minimality of $A$ we conclude that $A=K_1K_2$.

Therefore, the permutation group $A=K_1K_2$ acting on $G=H_1H_2$ is
permutation isomorphic to the permutation direct product
$K_1^{H_1} \times K_2^{H_2}$ acting on $H_1 \times H_2$ (see \cite[p.~17]{DM}).
This implies that $\A =\A_{H_1} \otimes \A_{H_2}$, as required.
If $\A_{H_i}$ and $\A_{H_2}$ are CI, then so is $\A$ by Corollary~\ref{ci-tensor}.
\end{proof}
%-------------------------------------------------------------------------------------------------------------%
\subsection{Generalised wreath product}

Let $\A$ be an S-ring over a group $G$ and $S=U/L$ be an
$\A$-section of $G$.  The S-ring $\A$ is the \emph{$S$-wreath product} (also called the \emph{generalised wreath product} of
$\A_U$ with $\A_{G/L}$), written as $\A=\A_U \wr_{S} \A_{G/L}$,  if
\begin{enumerate}[{\rm (1)}]
\setlength{\itemsep}{0.4\baselineskip}
\item $L \trianglelefteqslant G$,
\item every $X \in \cS(\A)$, $X \subseteq G \setminus U$ is union of some $L$-cosets.
\end{enumerate}
\smallskip

The $S$-wreath product is \emph{non-trivial} if $L \neq \{e\}$ and $U \neq G$. 
Notice the following relation with the star product.  
If $A_V \star \A_W$ is defined over the group $G$ such that 
$V \cap W \trianglelefteqslant G$, then the latter star product becomes 
the $V/(V \cap W)$-wreath product.

An S-ring $\A$ is called \emph{decomposable} if it can be
expressed as a non-trivial $S$-wreath product and \emph{indecomposable} otherwise.
In the special case when $U=L$, i.e., $S$ is trivial, the $S$-wreath product is also called \emph{wreath product} and written as $\A_U \wr \A_{G/U}$.

The following result is a sufficient condition for the CI-property
of a generalised wreath product. To state the condition, we set the notation:
$\aut_G(\A):=\aut(\A) \cap \aut(G)$.
Clearly, if $S$ is an $\A$-section of $G$, then $\aut_G(\A)^S \leqslant \aut_S(\A_S)$.

\begin{prop}\label{KR}
{\rm (\cite[Theorem~1.1]{KR})}
Let $G$ be a direct product of elementary abelian groups, and
$\A=\A_U \wr_S \A_{G/L}$ be a non-trivial $S=U/L$-wreath
product such that both $\A_U$ and $\A_{G/L}$ are CI. Then
$\A$ is CI whenever
$$
\aut_S(\A_S)=\aut_U(\A_U)^{S} \aut_{G/L}(\A_{G/L})^S.
$$
\end{prop}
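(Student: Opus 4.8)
The plan is to verify the defining equality $\iso(\A)=\aut(\A)\aut(G)$ of a CI-S-ring. The inclusion $\aut(\A)\aut(G)\subseteq\iso(\A)$ is automatic, and every element of $\iso(\A)$ differs from a normalised one by a right translation lying in $G\r\leqslant\aut(\A)$, so it suffices to treat a normalised $\varphi\in\iso_e(\A,\B)$ for some S-ring $\B$ over $G$ and show $\varphi\in\aut(\A)\aut(G)$. Concretely, I would produce a group automorphism $\tau\in\aut(G)$ that is an isomorphism $\A\to\B$ inducing the \emph{same} correspondence of basic sets as $\varphi$; then $\varphi\tau^{-1}$ fixes every basic set of $\A$, so $\varphi\tau^{-1}\in\aut(\A)$, and $\varphi=(\varphi\tau^{-1})\tau$ has the required form.

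First I would transport the wreath decomposition to the target. By Proposition~\ref{iso} the images $U^\varphi,L^\varphi$ are $\B$-subgroups, $\varphi$ restricts to an isomorphism $\varphi_U\colon\A_U\to\B_{U^\varphi}$ and (as $G$ is abelian, so $L,L^\varphi\trianglelefteqslant G$) induces $\varphi^{G/L}\colon\A_{G/L}\to\B_{G/L^\varphi}$; moreover the image of a basic set outside $U$ is a union of $L^\varphi$-cosets by Proposition~\ref{iso}(ii), so $\B=\B_{U^\varphi}\wr_{S^\varphi}\B_{G/L^\varphi}$ is again a generalised wreath product. Since $G$ is a direct product of elementary abelian groups, every subgroup is a direct summand and any flag $L^\varphi<U^\varphi$ can be carried onto $L<U$ by some $\sigma_0\in\aut(G)$. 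Replacing $\varphi$ by $\varphi\sigma_0$ and $\B$ by $\B^{\sigma_0}$ leaves the problem equivalent (as $\sigma_0\in\aut(G)$) and lets me assume $U^\varphi=U$, $L^\varphi=L$, so that $\B=\B_U\wr_S\B_{G/L}$ is an $S$-wreath product over the same section $S=U/L$.

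Now I would invoke the two CI hypotheses. Writing $\varphi_U=\alpha_U\tau_U$ with $\alpha_U\in\aut(\A_U)$ and $\tau_U\in\aut(U)$ (possible since $\A_U$ is CI) gives a group automorphism $\tau_U\colon U\to U$ that is an isomorphism $\A_U\to\B_U$; because $\alpha_U$ fixes every basic set, $\tau_U$ realises the same basic-set correspondence as $\varphi_U$, and one arranges $L^{\tau_U}=L$ using that $\varphi_U$ preserves $L$. Likewise CI of $\A_{G/L}$ yields $\tau_Q\in\aut(G/L)$, an isomorphism $\A_{G/L}\to\B_{G/L}$ matching $\varphi^{G/L}$. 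These induce group automorphisms $\tau_U^S,\tau_Q^S$ of $S$, each an isomorphism $\A_S\to\B_S$, so their discrepancy $\tau_U^S(\tau_Q^S)^{-1}$ lies in $\aut_S(\A_S)$. This is exactly where the standing hypothesis enters: by $\aut_S(\A_S)=\aut_U(\A_U)^{S}\aut_{G/L}(\A_{G/L})^S$ I can factor this discrepancy and absorb its two parts into $\tau_U$ and $\tau_Q$ by post-composing with suitable elements of $\aut_U(\A_U)$ and $\aut_{G/L}(\A_{G/L})$, which alter neither the property of being a group isomorphism onto the corresponding factor of $\B$ nor the basic-set correspondence, so that afterwards $\tau_U^S=\tau_Q^S$.

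Finally I would glue. With $\tau_U,\tau_Q$ agreeing on $S$ and $L^{\tau_U}=L$, I choose bases adapted to the chain $L<U<G$ in each elementary abelian primary component and define $\tau\in\aut(G)$ by $\tau|_U=\tau_U$ and by lifting $\tau_Q$ on a complement; the agreement on $S$ makes $\tau$ a well-defined automorphism with $\tau|_U=\tau_U$ and $\tau^{G/L}=\tau_Q$. To confirm $\tau\in\iso(\A,\B)$ with the correct correspondence, a basic set $X\subseteq U$ is handled by $\tau|_U=\tau_U$, while for $X\subseteq G\setminus U$ one uses the wreath structure $X=\pi_{G/L}^{-1}(X^{\pi_{G/L}})$ together with $\tau(L)=L$ and $\tau^{G/L}=\tau_Q$ to obtain $X^\tau=X^\varphi$, whence $\varphi\tau^{-1}\in\aut(\A)$. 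I expect the main obstacle to be precisely this section-gluing: forcing the two factor-isomorphisms to coincide on $S$ (the sole role of the compatibility hypothesis, including the point of arranging $L^{\tau_U}=L$) and then checking that the glued map still respects the mixed basic sets outside $U$ — the technical heart being that those sets are unions of $L$-cosets and hence governed entirely by the quotient data on $G/L$.
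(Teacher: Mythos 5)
This proposition is never proved in the paper: it is imported verbatim from \cite[Theorem~1.1]{KR}, so there is no internal proof to compare yours against. Judged on its own, your argument is correct and is essentially the expected proof of the cited theorem (and, as far as I can tell, the route taken in \cite{KR}): normalise $\varphi$, transport the $S$-wreath decomposition to the target S-ring and carry its flag back onto $L<U$ by an automorphism of $G$, use the CI-property of $\A_U$ and $\A_{G/L}$ to replace $\varphi_U$ and $\varphi^{G/L}$ by group automorphisms inducing the same basic-set correspondences, absorb the resulting discrepancy on $S$ via the hypothesis $\aut_S(\A_S)=\aut_U(\A_U)^{S}\aut_{G/L}(\A_{G/L})^S$, and glue along $L<U<G$ (block-triangular extension in each primary component), this gluing being precisely where the assumption that $G$ is a direct product of elementary abelian groups is needed. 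One wording caution: $\varphi\tau^{-1}\in\aut(\A)$ does not follow merely from ``$\varphi\tau^{-1}$ fixes every basic set'' --- a permutation fixing all basic sets setwise need not be a Cayley-graph automorphism; it follows because $\varphi$ and $\tau$ are both isomorphisms $\cay(G,X)\to\cay(G,X^{\varphi})$ for every $X\in\cS(\A)$, so their composite is an automorphism of each $\cay(G,X)$.
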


Note that, if $\A_S=\Z S$ in Proposition~\ref{KR}, then
$\aut_S(\A_S)$ is trivial, so we obtain the following.

\begin{cor}\label{ci-wp}
If $\A_S=\Z S$ in Proposition~\ref{KR}, then $\A$ is CI.
\end{cor}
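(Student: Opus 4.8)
The plan is to verify that the hypothesis
$\aut_S(\A_S)=\aut_U(\A_U)^{S}\aut_{G/L}(\A_{G/L})^S$ of Proposition~\ref{KR} holds automatically once $\A_S=\Z S$, and then quote that proposition. Since $\A_S=\Z S$, every basic set of $\A_S$ is a singleton, so $\rk(\A_S)=|S|$ and the basic Cayley graphs are the graphs $\cay(S,\{s\})$ for $s\in S$. Hence a permutation of $S$ lies in $\aut(\A_S)=\bigcap_{s\in S}\aut(\cay(S,\{s\}))$ precisely when it preserves each of these graphs; writing $y=e_S^{\sigma}$ and applying the arc-preserving condition with the identity, one finds that any such $\sigma$ must be the translation $x\mapsto xy$, and conversely every such translation qualifies. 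Thus $\aut(\A_S)=S\r=\rho_S(S)$.

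First I would use this to compute $\aut_S(\A_S)=\aut(\A_S)\cap\aut(S)$. A translation $\rho_S(y)\colon x\mapsto xy$ fixes the identity $e_S$ only when $y=e_S$, whereas every group automorphism in $\aut(S)$ fixes $e_S$. Consequently the only permutation lying in both $S\r$ and $\aut(S)$ is $\mathrm{id}_S$, so $\aut_S(\A_S)=\{\mathrm{id}_S\}$ is trivial; this is exactly the observation recorded just before the statement.

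Next I would show that both factors on the right-hand side of the hypothesis are forced to be trivial. Restriction to the $\A_U$-section $S=U/L$ gives $\aut_U(\A_U)^S\leqslant\aut_S((\A_U)_S)=\aut_S(\A_S)$, and likewise $\aut_{G/L}(\A_{G/L})^S\leqslant\aut_S(\A_S)$, using the identifications $(\A_U)_S=\A_S=(\A_{G/L})_S$ coming from the composition of sections. Since $\aut_S(\A_S)$ is trivial, both $\aut_U(\A_U)^S$ and $\aut_{G/L}(\A_{G/L})^S$ are trivial, so their product equals $\{\mathrm{id}_S\}=\aut_S(\A_S)$. Therefore the hypothesis of Proposition~\ref{KR} is satisfied, and that proposition yields that $\A$ is CI.

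There is no serious obstacle here: the entire corollary rests on the identification $\aut(\A_S)=S\r$ valid when $\A_S=\Z S$, after which every term in the displayed equation collapses to the trivial group. The only point demanding any care is confirming that the two restriction maps genuinely land inside $\aut_S(\A_S)$, which follows from the same section-restriction reasoning already invoked for $\aut_G(\A)^S$ in the remark preceding Proposition~\ref{KR}.
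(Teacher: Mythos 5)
Your proposal is correct and follows essentially the same route as the paper: the paper's entire proof is the remark preceding the corollary, namely that $\A_S=\Z S$ forces $\aut_S(\A_S)$ to be trivial, whence the displayed hypothesis of Proposition~\ref{KR} holds automatically. You merely spell out the two details the paper leaves implicit --- the computation $\aut(\Z S)=S\r$ giving $\aut_S(\A_S)=S\r\cap\aut(S)=\{\mathrm{id}_S\}$, and the containments $\aut_U(\A_U)^S,\ \aut_{G/L}(\A_{G/L})^S\leqslant\aut_S(\A_S)$ that force both factors to collapse --- both of which are accurate.
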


Two subgroups $K_1, K_2 \leqslant \aut(G)$ are
\emph{Cayley equivalent}, written as $K_1 \approx_{\mathrm{Cay}} K_2$,
if $\orb(K_1,G)$ $=\orb(K_2,G)$ (see \cite{KR}).
A cyclotomic S-ring $\A$ over $G$ is said to be \emph{Cayley minimal} if
$$
\big\{ K \leqslant \aut(G) : K \approx_{\mathrm{Cay}}
\aut_G(\A) \big\} = \{\aut_G(\A)\}.
$$

\begin{prop}\label{ci-caymin}
{\rm (\cite[Lemma~4.2]{KR2})}
With the assumptions in Proposition~\ref{KR}, suppose that at least one of the S-rings
$\A_U$ and $\A_{G/L}$ is cyclotomic and $\A_S$ is Cayley minimal.
Then $\A$ is CI.
\end{prop}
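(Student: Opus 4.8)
The plan is to verify the single extra hypothesis of Proposition~\ref{KR}, i.e. the identity
$$
\aut_S(\A_S)=\aut_U(\A_U)^{S}\,\aut_{G/L}(\A_{G/L})^{S},
$$
and then to apply that proposition directly (its remaining assumptions — that $G$ is a direct product of elementary abelian groups, that $\A=\A_U\wr_S\A_{G/L}$ is a non-trivial $S$-wreath product, and that $\A_U$ and $\A_{G/L}$ are CI — are exactly the standing assumptions here). One inclusion is free of charge: since $S=U/L$ is an $\A_U$-section of $U$ and also an $\A_{G/L}$-section of $G/L$, the remark preceding Proposition~\ref{KR} gives $\aut_U(\A_U)^{S}\leqslant\aut_S(\A_S)$ and $\aut_{G/L}(\A_{G/L})^{S}\leqslant\aut_S(\A_S)$, so the product on the right-hand side (formed inside the group $\aut_S(\A_S)$) is contained in $\aut_S(\A_S)$. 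Thus everything reduces to the reverse inclusion, and I expect to obtain it already from the single cyclotomic factor.

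Assume first that $\A_{G/L}$ is cyclotomic, say $\A_{G/L}=\cyc(K,G/L)$ with $K\leqslant\aut(G/L)$; the case in which $\A_U$ is cyclotomic is symmetric, the only difference being that $S=U/L$ is then treated as a quotient section of $U$ rather than as a subgroup section of $G/L$. Every $k\in K$ lies in $\aut_{G/L}(\A_{G/L})$ and hence fixes the $\A_{G/L}$-subgroup $U/L$ setwise; thus $K$ stabilises $S$ and induces a group $K^{S}$ of automorphisms of $S$ with $K^{S}\leqslant\aut_{G/L}(\A_{G/L})^{S}$. The crucial step is to recognise $\A_S$ as the cyclotomic S-ring $\cyc(K^{S},S)$. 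This follows by comparing orbits: here $\A_S=(\A_{G/L})_S$, whose basic sets are exactly the basic sets of $\A_{G/L}$ contained in $U/L$, that is, the $K$-orbits lying in $U/L$, and these coincide with the orbits of $K^{S}$ on $S$. (In the symmetric case one instead compares the $\pi_{U/L}$-images of the $K$-orbits on $U$ with the $K^{S}$-orbits, using that $L$ and $U$ are $\A_U$-subgroups so that $K$ descends to $S$; one may equally appeal to the identity $\A_S=V(S,(A^{S})_{e_S})$ for a group $A$ realising the cyclotomic ring.)

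It now suffices to push $K^{S}$ up to the full group $\aut_S(\A_S)$. Because $K^{S}\leqslant\aut_S(\A_S)$ and $\aut_S(\A_S)$ preserves every basic set of $\A_S$, the orbits of $\aut_S(\A_S)$ refine the basic set partition while being unions of $K^{S}$-orbits; since the $K^{S}$-orbits are already the basic sets, we get $\orb(\aut_S(\A_S),S)=\cS(\A_S)=\orb(K^{S},S)$, i.e. $K^{S}\approx_{\mathrm{Cay}}\aut_S(\A_S)$. As $\A_S=\cyc(K^{S},S)$ is cyclotomic and, by hypothesis, Cayley minimal, the only subgroup of $\aut(S)$ that is Cayley equivalent to $\aut_S(\A_S)$ is $\aut_S(\A_S)$ itself; hence $K^{S}=\aut_S(\A_S)$. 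Therefore $\aut_S(\A_S)=K^{S}\leqslant\aut_{G/L}(\A_{G/L})^{S}$, which yields the reverse inclusion and the desired identity, and Proposition~\ref{KR} then gives that $\A$ is CI. The only genuinely delicate point is the identification $\A_S=\cyc(K^{S},S)$ together with the orbit bookkeeping that separates the subgroup-section case ($\A_{G/L}$ cyclotomic) from the quotient-section case ($\A_U$ cyclotomic); everything else is formal.
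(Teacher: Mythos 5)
Your proof is correct: the reduction to verifying the factorization hypothesis of Proposition~\ref{KR}, the identification $\A_S=\cyc(K^S,S)$ via orbit comparison, and the use of Cayley minimality to force $K^S=\aut_S(\A_S)$ (hence $\aut_S(\A_S)\leqslant\aut_{G/L}(\A_{G/L})^S$, resp.\ $\aut_U(\A_U)^S$) are all sound. The paper itself gives no proof of this proposition, quoting it from \cite[Lemma~4.2]{KR2}, and your argument is essentially the one given in that source.
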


This proposition will be especially useful in conjunction with the following lemma.

\begin{lem}\label{mix}
Let $\A$ be an S-ring over a cyclic group $G$ of order $n$.
\begin{enumerate}[{\rm (i)}]
\item If $n$ is a prime, then $\A$ is cyclotomic.
\item If $n=pq$ for distinct primes $p, q$ and
$\rk(\mathcal{A}) \ne 2$, then $\A$ is cyclotomic or
a non-trivial wreath product of two S-rings.
\item If $\A$ is cyclotomic, then it is Cayley minimal.
\end{enumerate}
\end{lem}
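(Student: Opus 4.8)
The plan is to prove the three parts of Lemma~\ref{mix} in order, drawing on the structure theory of S-rings over cyclic groups of small order. For part (i), where $n$ is prime, I would argue that the only $\A$-subgroups are the trivial ones $\{e\}$ and $G$, so $\A$ is primitive; but a primitive S-ring over a cyclic group of prime order is automatically cyclotomic. Concretely, if $\A$ has rank $2$ it is the trivial cyclotomic S-ring $\cyc(\aut(G),G)$, and otherwise each basic set $X$ with $e \notin X$ is a union of orbits of a group $K \leqslant \aut(G) \cong C_{n-1}$. The key point is that over $C_n$ with $n$ prime the multiplier group $\aut(G)$ is cyclic, so the partition $\cS(\A)$ is invariant under some subgroup $K$ of multipliers and hence $\A = \cyc(K,G)$. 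This is the cleanest of the three and I would dispatch it quickly.

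For part (ii), with $n = pq$ and $\rk(\A) \neq 2$, the group $G = C_{pq}$ has exactly two non-trivial proper subgroups, namely its Sylow subgroups $G_p$ and $G_q$, each of prime order. The plan is to distinguish cases according to which of these are $\A$-subgroups. If $\A$ is primitive then, since $G$ has a cyclic Sylow subgroup and composite order, Proposition~\ref{W} forces $\rk(\A)=2$, contradicting the hypothesis; hence $\A$ has a non-trivial proper $\A$-subgroup, say $H \in \{G_p, G_q\}$. Writing $\A_H$ and $\A_{G/H}$ for the induced S-rings over groups of prime order, each is cyclotomic by part~(i). The main work is to show that when $\A$ is not itself cyclotomic, the product structure degenerates into a wreath product $\A_H \wr \A_{G/H}$: this happens precisely when every basic set $X \subseteq G \setminus H$ is a union of $H$-cosets. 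I would invoke the established dichotomy for S-rings over $C_{pq}$ (the Leung--Man classification, or equivalently the star/wreath analysis of Klin--P\"oschel and Hirasaka--Muzychuk) to conclude that any S-ring over $C_{pq}$ of rank $\neq 2$ is either cyclotomic or a non-trivial wreath product over one of the two sections $G/G_p$ or $G/G_q$. The obstacle here is ruling out a genuinely ``mixed'' S-ring that is neither cyclotomic nor a wreath product; this is exactly where the coprimality $\gcd(p,q)=1$ and the Sylow structure enter, and it is the heart of the argument.

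For part (iii), suppose $\A = \cyc(K,G)$ is cyclotomic with $K \leqslant \aut(G)$, so that $\aut_G(\A)$ contains $K$ and $\orb(\aut_G(\A),G)=\cS(\A)=\orb(K,G)$. I want to show $K$ is the unique subgroup of $\aut(G)$ that is Cayley equivalent to $\aut_G(\A)$. Since $G$ is cyclic, $\aut(G) \cong \Z_n^{\times}$ is abelian, and the key fact is that for an abelian multiplier group the orbit partition determines the group: a multiplier $m \in \aut(G)$ fixes the basic set containing a generator $g$ setwise if and only if $m$ lies in the stabiliser structure recorded by $\cS(\A)$, and for cyclic $G$ this stabiliser is exactly $K$. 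I would make this precise by showing that any $K' \approx_{\mathrm{Cay}} \aut_G(\A)$ must preserve every basic set, hence $K' \leqslant \aut_G(\A)$, and conversely that $\aut_G(\A)$ itself, being the full multiplier group preserving $\cS(\A)$, is recovered as the setwise stabiliser of the basic sets; equality of orbit partitions then pins down $K' = \aut_G(\A)$. The essential feature making Cayley minimality hold is the cyclicity of $\aut(G)$ over a cyclic $G$, which prevents two distinct multiplier subgroups from sharing the same orbit partition.

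The step I expect to be the main obstacle is the dichotomy in part~(ii): establishing that no S-ring over $C_{pq}$ of rank greater than $2$ can avoid being either cyclotomic or a non-trivial wreath product. I would lean on the known classification of S-rings over cyclic groups whose order is a product of two distinct primes rather than rederiving it, and the subtlety lies in handling the case where both Sylow subgroups are $\A$-subgroups while the global S-ring still fails to split as a tensor or wreath product.
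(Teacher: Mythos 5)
Your overall route coincides with the paper's: part (i) via the multiplier action coming from Proposition~\ref{SW}(i), part (ii) by appealing to the known classification of S-rings over $C_{pq}$ (the paper simply cites \cite[Theorem~2.10]{KP}, exactly as you propose to do), and part (iii) by showing that a subgroup of $\aut(G)$, for $G$ cyclic, is pinned down by its orbit partition. However, two steps, as you have written them, contain genuine gaps.

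In (i), the inference ``the partition $\cS(\A)$ is invariant under some subgroup $K$ of multipliers and hence $\A=\cyc(K,G)$'' is not valid: invariance only says each basic set is a \emph{union} of $K$-orbits (every partition is invariant under the trivial subgroup), whereas cyclotomic means each basic set is a \emph{single} $K$-orbit. The missing argument, which is the whole content of Wielandt's theorem that the paper extracts from Proposition~\ref{SW}(i), runs as follows: for a basic set $X$ containing $x\ne e$ (a generator, since $n$ is prime), put $M=\{m : x^m\in X\}$; by Proposition~\ref{SW}(i) each $X^{(m)}$ is a basic set, and for $m\in M$ it meets $X$, hence equals $X$; therefore $M$ is multiplicatively closed, so it is a subgroup and $X=x^M$ is one $M$-orbit; finally, if $X'$ is another basic set and $x'=x^a\in X'$, then $X'=X^{(a)}$ yields the same subgroup $M$, so $\A=\cyc(M,G)$. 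Your appeal to primitivity, by contrast, is a dead end: primitivity of an S-ring over $C_p$ does not by itself give cyclotomicity.

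In (iii), the ``essential feature'' is not cyclicity of $\aut(G)$: for $G\cong C_n$ the group $\aut(G)\cong(\Z/n\Z)^\times$ is in general not cyclic (e.g.\ $n=15$), and the lemma is stated for arbitrary $n$. What the argument actually needs, and what the paper uses, is that $\aut(G)$ acts \emph{freely} on the generators of $G$. Hence $K$ is regular on the basic set $X$ containing a generator $x$, so $|K|=|X|$ and $K=\{m : x^m\in X\}$; then any $K'\leqslant\aut(G)$ with the same orbit partition satisfies $x^{K'}=X$, forcing $K'=\{m : x^m\in X\}=K$, and the same identity gives $\aut_G(\A)=K$. Note that your closing step ``equality of orbit partitions then pins down $K'=\aut_G(\A)$'' is, as written, a restatement of the hypothesis $K'\approx_{\mathrm{Cay}}\aut_G(\A)$ rather than an argument; the free action on generators is precisely what converts it into one. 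With these two repairs, your proof becomes the paper's proof.
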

\begin{proof}
The statement in (i) follows from Proposition~\ref{SW}(i). The statement in (ii) follows from \cite[Theorem~2.10]{KP}.

For (iii) let $\A=\cyc(K,G)$ for a subgroup $K \leqslant \aut(G)$.
Let $x$ be a generator of $G$ and $X \in \cS(\A)$ be the basic set containing $x$.
It is easy to see that $K$ is regular on $X$, hence $|K|=|X|$.
This implies that $\aut_G(\A)=K$ and $K^\prime \not\approx_{\mathrm{Cay}}
K$ for any proper subgroup $K^\prime < K$, i.e., $\A$ is Cayley minimal.
\end{proof}

Dobson and Witte~\cite{DW} described the groups in $\Sup(G\r)$ where
$G \cong C_p^2$ for a prime $p$ (the description of the imprimitive groups were obtained earlier
by Jones~\cite{J}).  The proposition below follows from their result and for our convenience it is formulated here in the language of S-rings.

\begin{prop}\label{DW}
{\rm (cf.~\cite[Theorem~14]{DW})}
Let $G \cong C_p^2$ for a prime $p$, $A \in \Sup(G\r)$ and
$\A=V(G,A_e)$.
If $G$ contains exactly one $\A$-subgroup of order $p$,
say $L$, then $\A=\A_L \wr \A_{G/L}$.
\end{prop}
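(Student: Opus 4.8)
The statement to prove is Proposition~\ref{DW}: if $G \cong C_p^2$ and $G$ contains exactly one $\A$-subgroup of order $p$, say $L$, then $\A = \A_L \wr \A_{G/L}$. My plan is to verify the defining condition of the (trivial) $S$-wreath product with $S = L/L$ trivial, namely that every basic set $X \in \cS(\A)$ with $X \subseteq G \setminus L$ is a union of $L$-cosets; equivalently, that $L \leqslant \rad(X)$ for every such basic set. Since $L \trianglelefteqslant G$ automatically (the group is abelian), the wreath condition reduces entirely to this radical statement.

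First I would invoke the underlying result of Dobson and Witte on the structure of $A \in \Sup(G\r)$ for $G \cong C_p^2$, as cited, which classifies these groups. The key dichotomy is between primitive and imprimitive $A$. If $A$ is primitive, then $V(G, A_e)$ has no nontrivial proper $\A$-subgroups, contradicting the existence of the order-$p$ subgroup $L$; so $A$ must be imprimitive, and its systems of blocks correspond exactly to the $\A$-subgroups of order $p$. The hypothesis that $L$ is the \emph{unique} such $\A$-subgroup therefore says that $A$ preserves a unique block system, the one given by the cosets of $L$. I would then translate the Dobson--Witte description of imprimitive such groups into the conclusion that $A_e$ fixes every coset of $L$ setwise, i.e. the $A_e$-orbits outside $L$ each lie within a single $L$-coset union and in fact are unions of $L$-cosets.

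The cleaner route avoiding heavy citation is via the radical and the basic-set combinatorics. Take any basic set $X$ with $X \subseteq G \setminus L$. I would show $\langle X \rangle$ and $\rad(X)$ are $\A$-subgroups (as recorded in the excerpt), and argue that if $\rad(X)$ were trivial then one could produce a \emph{second} $\A$-subgroup of order $p$ distinct from $L$, contradicting uniqueness. Concretely, in $C_p^2$ the proper nontrivial subgroups are exactly the $p+1$ lines through the origin; $\langle X \rangle$ is either all of $G$ or one of these lines. If $\langle X \rangle$ is a line different from $L$, that line is itself an $\A$-subgroup of order $p$, contradicting uniqueness; so every basic set outside $L$ generates $G$. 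Using Proposition~\ref{SW}(ii) with the prime $p$ and the quotient-counting argument, one shows that the fibres of $X$ over the $L$-cosets are forced to have equal sizes modulo $p$, which pins down $L \leqslant \rad(X)$; thus $X$ is a union of $L$-cosets.

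The main obstacle I expect is the last step: ruling out a basic set $X$ that meets several $L$-cosets with \emph{unequal} intersection sizes, i.e. genuinely establishing $L \leqslant \rad(X)$ rather than merely $\langle X \rangle = G$. The Schur--Wielandt relations of Proposition~\ref{SW}(ii) give $\A$-sets $X^{[p,k]}$ in the quotient $G/L \cong C_p$, and I would exploit that any nonempty proper $\A$-subset of the quotient would lift to a \emph{new} order-$p$ $\A$-subgroup or violate uniqueness; forcing all the fibre-sizes into a single residue class mod $p$ is what yields invariance under $L$. Making this counting airtight is the crux, and here leaning directly on the cited Dobson--Witte classification (Theorem~14) is the safest way to close the gap, since it already encodes exactly which $A_e$-orbit structures occur.
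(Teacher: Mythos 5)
Your proposal ultimately rests on the same foundation as the paper: the paper gives no self-contained proof of Proposition~\ref{DW} at all, but presents it as a reformulation, in S-ring language, of the Dobson--Witte classification of transitive groups of degree $p^2$ (the imprimitive case going back to Jones). Your first route --- rule out primitivity because $L$ exists, identify blocks through $e$ with $\A$-subgroups, and read the wreath structure off the classification of imprimitive groups with a unique block system --- is exactly this, so with that route (and with your own closing fallback to ``lean directly on Theorem~14'') the proposal coincides with the paper's treatment. One small correction: what the classification must supply is not that $A_e$ fixes every $L$-coset setwise, but that the $2$-closure of an imprimitive group of degree $p^2$ admitting a unique block system is the full wreath product in its imprimitive action; since $\aut(\A)=A^{(2)}$, this is what makes every $A_e$-orbit outside $L$ a union of $L$-cosets, i.e.\ $\A=\A_L\wr\A_{G/L}$.

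The ``cleaner'' self-contained route, however, has a genuine gap at exactly the step you flag as the crux, and the tool you propose cannot repair it. Proposition~\ref{SW}(ii) applied with the prime $p$ is vacuous over $G\cong C_p^2$: here $G[p]=G$, so $|X\cap xG[p]|=|X|$ for every $x\in X$, and $x^p=e$ for all $x\in G$, whence each set $X^{[p,k]}$ is either empty or equal to $\{e\}$. It therefore carries no information whatsoever about how a basic set $X\subseteq G\setminus L$ meets the individual $L$-cosets. What \emph{can} be done elementarily is to note that $\underline{X}\cdot\underline{L}=\sum_{g\in G}|X\cap gL|\,g$ lies in $\A$ and is constant on basic sets, so all nonempty fibres of $X$ over $L$-cosets have a common size $c$; but the entire difficulty is to show $c=p$, i.e.\ $L\leqslant\rad(X)$, and equal fibre sizes (even modulo $p$) are far from that. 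A second warning sign is that your elementary argument never uses the hypothesis $\A=V(G,A_e)$: pure S-ring combinatorics over $C_p^2$ behaves very differently from the cyclic-Sylow situation of Proposition~\ref{W} --- for instance, cyclotomic S-rings $\cyc(K,C_p^2)$ with $K\leqslant\aut(C_p^2)\cong\mathrm{GL}(2,p)$ irreducible (e.g.\ a Singer subgroup) are primitive of rank greater than $2$ --- so the group-theoretic input hidden in the citation is what actually closes the argument. Your fallback to Dobson--Witte is thus not merely ``the safest way'': as written, it is the only way.
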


An S-ring $\A$ over a group $G$ is a \emph{$p$-S-ring} if
$G$ is a $p$-group and for every $X \in \cS(\A)$, $|X|$ is
equal to a power of $p$.

\begin{prop}\label{KM2}
{\rm (\cite[Lemma~5.2]{KM})}
Let $G$ be an abelian group, $A \in \Sup^{\min}(G\r)$ and $\A=V(G,A_e)$.
Suppose that $U$ is an $\A$-subgroup such that $G/U$ is a $p$-group for a prime $p$.
Then $\A_{G/U}$ is a $p$-S-ring.
\end{prop}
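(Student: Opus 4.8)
The plan is to reduce the whole statement to showing that the induced group $\overline{A}:=A^{G/U}$ is itself a $p$-group. Once this is done the conclusion is immediate: since $G/U$ is an $\A$-section, we have $\A_{G/U}=V(G/U,(\overline{A})_{\bar e})$, where $\bar e$ denotes the identity of $G/U$, so the basic sets of $\A_{G/U}$ are exactly the orbits of the point stabiliser $(\overline{A})_{\bar e}$ on $G/U$; and the orbits of a $p$-group all have $p$-power length. Thus every basic set of $\A_{G/U}$ would have $p$-power size, which is precisely the assertion that $\A_{G/U}$ is a $p$-S-ring.

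Following the pattern of Lemma~\ref{sylow}, I would first use $\preceq_{G}$-minimality of $A$ to write $A=\langle F_1,\ldots,F_k\rangle$, where $F_1=G\r, F_2,\ldots,F_k$ are representatives of the conjugacy classes of regular subgroups of $A$ isomorphic to $G$. Since $U$ is an $\A$-subgroup, each $F_i$ permutes the $U$-cosets, so $\overline{F_i}:=F_i^{G/U}$ is defined; being a transitive abelian group on the set $G/U$, which has $p$-power size, each $\overline{F_i}$ is regular and hence a $p$-group. Consequently $\overline{A}=\langle \overline{F_1},\ldots,\overline{F_k}\rangle$ is a transitive group on the $p$-group $G/U$ that is generated by the regular $p$-subgroups $\overline{F_i}$.

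Next I would fix a Sylow $p$-subgroup $P$ of $\overline{A}$ containing $\overline{F_1}=(G/U)_R$, and apply Sylow's theorem to choose $\delta_i\in\overline{A}$ (with $\delta_1=\mathrm{id}$) such that $\overline{F_i}^{\,\delta_i}\leqslant P$ for every $i$. Lifting each $\delta_i$ to a preimage $\gamma_i\in A$ under the epimorphism $A\to\overline{A}$, I set $A':=\langle F_1,F_2^{\gamma_2},\ldots,F_k^{\gamma_k}\rangle\leqslant A$. The crucial step is then a minimality argument: $A'$ contains a conjugate of every $F_i$, so every regular subgroup of $A$ isomorphic to $G$ is conjugate in $A$ into $A'$; therefore $A'\preceq_{G}A$, and $\preceq_{G}$-minimality of $A$ forces $A'=A$. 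Passing to the quotient action gives $\overline{A}=(A')^{G/U}=\langle \overline{F_1},\overline{F_2}^{\,\delta_2},\ldots,\overline{F_k}^{\,\delta_k}\rangle\leqslant P$, whence $\overline{A}=P$ is a $p$-group, as desired.

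I expect the only delicate point to be the minimality step $A'=A$: one must verify that conjugating the class representatives $F_i$ by elements $\gamma_i\in A$ keeps $A'$ inside $A$ while still meeting every conjugacy class of regular subgroups isomorphic to $G$, so that the defining property of $\preceq_{G}$ applies and $\preceq_{G}$-minimality yields equality. This is exactly the mechanism already exploited in the proof of Lemma~\ref{sylow}, and reusing it here is what collapses the statement into the elementary Sylow-theoretic observation that a transitive group on a $p$-set generated by regular $p$-subgroups lying in a common Sylow $p$-subgroup is itself a $p$-group.
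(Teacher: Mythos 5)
Your proof is correct. Note that the paper itself gives no proof of Proposition~\ref{KM2}: it is quoted from \cite[Lemma~5.2]{KM}, so the only internal point of comparison is Lemma~\ref{sylow}, whose Sylow-conjugation-plus-minimality mechanism you correctly identified and transplanted; your argument effectively makes the cited result self-contained. All the steps check out: the reduction to showing that $A^{G/U}$ is a $p$-group is legitimate because $\A_{G/U}=V\big(G/U,(A^{G/U})_{\bar{e}}\big)$ (the paper records this schurity of quotients via \cite[Proposition~2.8]{HM}), each $\overline{F_i}$ is a transitive abelian, hence regular, group on the $p$-power-size set $G/U$ and is therefore a $p$-group, and the delicate step you flagged does work: $A'=\langle F_1,F_2^{\gamma_2},\ldots,F_k^{\gamma_k}\rangle$ contains an $A$-conjugate of each $F_i$, hence an $A$-conjugate of every regular subgroup of $A$ isomorphic to $G$, so $A'\preceq_{G}A$ and $\preceq_{G}$-minimality forces $A'=A$, after which $A^{G/U}$ lands inside the chosen Sylow $p$-subgroup $P$.
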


It is obvious that $\Z C_p$ is the only $p$-S-ring over $C_p$. Furthermore, it is
well-known that up to isomorphism, there are two
$p$-S-rings over $C_p^2$, namely
\begin{equation}\label{eq:p2-p}
\Z C_p^2~\text{and}~\Z C_p \wr \Z C_p,
\end{equation}
(see, e.g., \cite[Section~3.1]{HM}.

For the next two propositions let $G$ be an abelian group such
that $q \mid |G|$ and $q^2 \nmid |G|$ for a prime $q$ and let $\A$ be an S-ring over
$G$. Let $Q$ be the least $\A$-subgroup of order divisible by $q$ and
$H$ be the unique maximal $\A$-subgroup of order coprime to $q$.

\begin{prop}\label{MS1}
{\rm (\cite[Corollary~3.2]{SM})}
With notation as above, $\A$ is the $HQ/Q$-wreath product.
\end{prop}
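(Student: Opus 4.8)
The plan is to verify directly the two defining conditions of the $HQ/Q$-wreath product. Writing $S=HQ/Q$, so that $U=HQ$ and $L=Q$, condition (1) ($Q\trianglelefteqslant G$) is automatic since $G$ is abelian, and the whole proof reduces to condition (2): every basic set $X\in\cS(\A)$ with $X\subseteq G\setminus HQ$ is a union of $Q$-cosets, i.e.\ $Q\leqslant\rad(X)$. First I would record the elementary facts underlying the definitions of $Q$ and $H$. Since $q\mid|G|$ and $q^2\nmid|G|$, the Sylow $q$-subgroup $G_q$ is cyclic of order $q$, coincides with $G[q]$, and $G=G_q\times G_{q'}$ for the Hall $q'$-subgroup $G_{q'}$. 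Any $\A$-subgroup of order divisible by $q$ contains $G_q$, so the family of such subgroups is closed under intersection and $Q$ is its minimum; dually, the family of $\A$-subgroups of order coprime to $q$ is closed under products and $H$ is its maximum. Consequently every $\A$-subgroup is either contained in $H$ or contains $Q$. In particular $HQ$ is an $\A$-subgroup (a product of two $\A$-subgroups of the abelian group $G$), so each basic set is either contained in $HQ$ or disjoint from it; thus ``$X\subseteq G\setminus HQ$'' means exactly $X\cap HQ=\emptyset$.

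Now fix a basic set $X$ with $X\cap HQ=\emptyset$; the claim is that $G_q\leqslant\rad(X)$. Suppose not. Since $|G_q|=q$ is prime, $G_q\not\leqslant\rad(X)$ means $X$ is not a union of $G_q$-cosets, so some coset $xG_q$ meets $X$ in $k$ points with $1\leqslant k\leqslant q-1$, whence $X^{[q]}\neq\emptyset$. By Proposition~\ref{SW}(ii), $X^{[q]}$ is an $\A$-set, and since $X^{[q]}\subseteq\{g^q:g\in G\}=G_{q'}$ has order coprime to $q$, the $\A$-subgroup $\langle X^{[q]}\rangle$ has $q'$-order and therefore lies in $H$. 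The key observation is that for $g\in G_q$ one has $g^q=e$, so $x^q$ depends only on the coset $xG_q$; hence for the chosen $x$ (lying in a coset with $|X\cap xG_q|\not\equiv 0\ (\mathrm{mod}\ q)$) we get $x^q\in X^{[q]}\subseteq H$. Writing $x=st$ with $s\in G_q$ and $t\in G_{q'}$ gives $x^q=t^q\in H$; as the $q$-power map is an automorphism of $G_{q'}$ carrying the subgroup $H$ onto itself, this forces $t\in H$, and hence $x=st\in G_qH\subseteq QH=HQ$, contradicting $X\cap HQ=\emptyset$. Therefore $G_q\leqslant\rad(X)$, so $q\mid|\rad(X)|$, and since $\rad(X)$ is an $\A$-subgroup of order divisible by $q$ we conclude $Q\leqslant\rad(X)$, which is condition (2).

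The manipulations around $Q$, $H$, $HQ$ and the final passage from $G_q\leqslant\rad(X)$ to $Q\leqslant\rad(X)$ are routine once the closure properties above are in place; the real content lies in the middle step, the coset-counting via $X^{[q]}$. The point I expect to require the most care is the identification of $x^q$ as a $G_q$-coset invariant landing in $H$, combined with the fact that disjointness of $X$ from $HQ$ excludes \emph{partially} intersected $G_q$-cosets. It is precisely this combination that rules out $X^{[q]}\neq\emptyset$ and thereby yields $G_q\leqslant\rad(X)$; everything else is bookkeeping with the definitions of the generalised wreath product.
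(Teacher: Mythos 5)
Your proof is correct: the reduction of the statement to showing $Q \leqslant \rad(X)$ for every basic set $X$ disjoint from the $\A$-subgroup $HQ$, the use of Proposition~\ref{SW}(ii) to produce the non-empty $\A$-set $X^{[q]} \subseteq G_{q'}$ when some $G_q$-coset meets $X$ in between $1$ and $q-1$ points, the containment $\langle X^{[q]}\rangle \leqslant H$ by maximality of $H$, and the resulting contradiction $x \in G_qH \leqslant HQ$ (followed by upgrading $G_q \leqslant \rad(X)$ to $Q \leqslant \rad(X)$ via minimality of $Q$, since $\rad(X)$ is an $\A$-subgroup) are all sound. The paper itself gives no proof of this proposition — it is quoted from \cite[Corollary~3.2]{SM} — but your argument is exactly the standard Wielandt power-map technique underlying that citation, the same technique the paper deploys in Lemmas~\ref{trivial} and~\ref{X}, so this is essentially the expected approach rather than a genuinely different route.
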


\begin{prop}\label{MS2}
{\rm (\cite[Propositions~3.4~and~3.5]{SM})}
With notation as above, if
$|HQ/H| \ne q$ or $\A_{HQ/H} \cong \Z C_q$,
then $\A_{HQ}=\A_H \star \A_Q$.
\end{prop}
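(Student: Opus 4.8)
The plan is to verify the three defining conditions of the star product for $V=H$ and $W=Q$ inside $M:=HQ$, writing $D:=H\cap Q$. Condition (1) is immediate since $M$ is abelian. The whole argument is organised around the auxiliary section $M/D$: because $H\cap Q=D$, one checks that $M/D=(H/D)\times(Q/D)$ is an internal direct product of $\A_{M/D}$-subgroups, so the factorisation we are after is governed entirely by the structure of $\A_{M/D}$. First I would record two facts. Since $H$ is the largest $\A$-subgroup of order coprime to $q$, the group $D$ is the largest $\A$-subgroup of $Q$ of order coprime to $q$; and since $Q$ is the least $\A$-subgroup of order divisible by $q$, every $\A$-subgroup of $M$ of order divisible by $q$ contains $Q$. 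Passing to the section, the only $\A_{Q/D}$-subgroups are then the trivial one and $Q/D$ itself, i.e.\ $\A_{Q/D}$ is \emph{primitive}; correspondingly, any basic set meeting $Q\setminus D$ generates $Q$, and any basic set in $M\setminus H$ generates a subgroup containing $Q$ (otherwise it would lie in an $\A$-subgroup of order coprime to $q$, hence in $H$).

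The heart of the proof is to show $\A_{M/D}=\A_{H/D}\otimes\A_{Q/D}$, and this is exactly where the hypothesis enters, via a dichotomy. If $\A_{HQ/H}=\A_{Q/D}\cong\Z C_q$, then $\A_{Q/D}$ is the full group ring over $Q/D\cong C_q$, and Proposition~\ref{EKP} yields the tensor decomposition at once. If instead $|HQ/H|=|Q/D|\neq q$, then, using $q^2\nmid|G|$, the group $Q/D$ has composite order and a cyclic Sylow $q$-subgroup $C_q$; since $\A_{Q/D}$ is primitive, Proposition~\ref{W} forces $\rk(\A_{Q/D})=2$. Thus $(Q/D)^{\#}$ is a single basic set, and from this one again deduces $\A_{M/D}=\A_{H/D}\otimes\A_{Q/D}$. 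These two cases together cover the hypothesis $|HQ/H|\neq q$ or $\A_{HQ/H}\cong\Z C_q$.

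It then remains to lift the tensor decomposition of $\A_{M/D}$ back to the star product $\A_{HQ}=\A_H\star\A_Q$ over $M$. The key intermediate claim is that every basic set $X$ contained in $M\setminus H$ is a union of $D$-cosets, equivalently $D\leqslant\rad(X)$. I would prove this by combining the generation facts above with the primitivity of $\A_{Q/D}$ and the integrality of the structure constants: multiplying $\underline{X}$ by $\underline{D}$ produces an element of $\A$ whose coefficients must be constant on basic sets, and a basic set straddling the $D$-cosets would violate this. Granting the $D$-coset property, every basic set $X\subseteq M\setminus(H\cup Q)$ is the full preimage of a basic set of $\A_{M/D}$, which by the tensor decomposition has the form $\bar Y\times\bar Z$; unwinding the preimage gives $X=YZ$ with basic sets $Y\subseteq H$ and $Z\subseteq Q$, which is condition (3). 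Condition (2) is the special case in which the $Q$-factor is taken inside $Q\setminus D$.

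I expect the main obstacle to be precisely this last transfer step: establishing the $D$-invariance of the basic sets lying off $H$ and matching the tensor factors of the quotient $\A_{M/D}$ with genuine basic sets of $\A_H$ and $\A_Q$. The delicate point is the single degenerate configuration $|Q/D|=q$ with $\A_{Q/D}$ of rank $2$ (the non-split rank-$2$ S-ring on $C_q$): in that case the $q$-direction and the $q'$-direction can entangle and the factorisation genuinely fails, so the argument must use the hypothesis in an essential way to exclude it — the clause $\A_{HQ/H}\cong\Z C_q$ rules it out directly through Proposition~\ref{EKP}, while the clause $|HQ/H|\neq q$ rules it out through the rank bound of Proposition~\ref{W}.
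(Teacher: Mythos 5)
Your reduction has the right skeleton: writing $M=HQ$ and $D=H\cap Q$, the two statements (A) $\A_{M/D}=\A_{H/D}\otimes\A_{Q/D}$ and (B) every basic set of $\A$ inside $M\setminus H$ is a union of $D$-cosets do combine, exactly as you describe, into conditions (2) and (3) of the star product; and your preliminary facts (maximality of $H$ and minimality of $Q$ descend to $M/D$, $\A_{Q/D}$ is primitive, $\rk(\A_{Q/D})=2$ via Proposition~\ref{W} when $|Q/D|\neq q$, the identification $\A_{HQ/H}\cong\A_{Q/D}$) are all correct. The proposal fails, however, at the two steps that carry the actual content --- the very steps you flag as ``the main obstacle'' but then do not overcome. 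First, the argument offered for (B) is fallacious: from $\underline{X}\,\underline{D}\in\A$ and the constancy of its coefficients on basic sets, all one gets is that $|X\cap xD|$ equals a single value $c$ for every $x\in X$, i.e.\ $X$ meets each $D$-coset it touches in exactly $c$ points. Nothing rules out $1\leqslant c<|D|$, so a basic set ``straddling'' $D$-cosets produces no contradiction, and $D\leqslant\rad(X)$ does not follow. This equidistribution-versus-coset gap is exactly where the work lies.

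Second, in the case $|HQ/H|\neq q$, the sentence ``$(Q/D)^{\#}$ is a single basic set, and from this one again deduces $\A_{M/D}=\A_{H/D}\otimes\A_{Q/D}$'' is not a deduction. Proposition~\ref{EKP} only yields the containment $\A_{M/D}\supseteq\A_{H/D}\otimes\A_{Q/D}$ when neither factor is a full group ring, and rank $2$ of $\A_{Q/D}$ together with all your standing extremality assumptions does \emph{not} force equality. Indeed, for an odd prime $q$ and a prime $r$ with $q-1\mid r-1$, take $\A=\cyc(K,C_r\times C_q)$ with $K=\langle(\sigma,\mu)\rangle$, where $\sigma\in\aut(C_r)$ and $\mu\in\aut(C_q)$ both have order $q-1$: then $H=C_r$, $Q=C_q$, $D=\{e\}$, $\A_{Q/D}$ has rank $2$, yet the diagonal basic sets $\{(x^{\sigma^i},y^{\mu^i})\}$ of size $q-1$ do not factor as $YZ$, so the tensor (equivalently, star) decomposition fails. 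This example is of course the excluded configuration $|HQ/H|=q$, but it shows that the implication you rely on --- primitivity plus rank $2$ implies tensor --- is false, so the compositeness of $|Q/D|$ must be exploited again, in a substantive way (Schur--Wielandt arguments in the spirit of Proposition~\ref{SW}(ii), products of basic sets, etc.), to close Case 2. These two missing arguments are precisely what \cite[Propositions~3.4~and~3.5]{SM} establish; the present paper does not reproduce that proof but imports it, and your write-up would need to supply both steps before it could stand on its own.
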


\begin{lem}\label{ci-gwp}
With the assumptions in Proposition~\ref{KR}, $\A$ is CI whenever
$\A_{G/L}=\A_S \otimes \A_H$ for some $\A_{G/L}$-subgroup $H < G/L$.
\end{lem}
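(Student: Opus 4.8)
The plan is to verify the single numerical hypothesis of Proposition~\ref{KR}, namely that
$$
\aut_S(\A_S)=\aut_U(\A_U)^{S}\,\aut_{G/L}(\A_{G/L})^S,
$$
by exploiting the given tensor decomposition $\A_{G/L}=\A_S\otimes\A_H$. All remaining hypotheses of Proposition~\ref{KR} are assumed to hold, so once this equality is in hand the conclusion that $\A$ is CI is immediate.

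First I would recall the general fact stated just before Proposition~\ref{KR}: for any $\A$-section one has $\aut_{G/L}(\A_{G/L})^S\leqslant\aut_S(\A_S)$, and likewise $\aut_U(\A_U)^S\leqslant\aut_S(\A_S)$. Hence the right-hand side of the displayed identity is automatically contained in $\aut_S(\A_S)$, and since $\aut_S(\A_S)$ is a group it suffices to prove the single reverse inclusion $\aut_S(\A_S)\subseteq\aut_{G/L}(\A_{G/L})^S$. Thus everything reduces to showing that each $\sigma\in\aut_S(\A_S)$ is the restriction to $S$ of some element of $\aut_{G/L}(\A_{G/L})$ stabilising $S$.

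The decomposition $\A_{G/L}=\A_S\otimes\A_H$ means precisely that $G/L=S\times H$, with $S=U/L$ and $H$ complementary $\A_{G/L}$-subgroups meeting trivially, and that the basic sets of $\A_{G/L}$ are the products $X\times Y$ with $X\in\cS(\A_S)$ and $Y\in\cS(\A_H)$. Given $\sigma\in\aut_S(\A_S)$, I would consider the map $\sigma\times\mathrm{id}_H$ on $S\times H=G/L$. This is a group automorphism of $G/L$; it fixes the subgroup $S=S\times\{e_H\}$ setwise and restricts there to $\sigma$; and it sends each basic set $X\times Y$ to $X^{\sigma}\times Y$, which is again a basic set since $X^{\sigma}\in\cS(\A_S)$. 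Consequently $\sigma\times\mathrm{id}_H\in\aut_{G/L}(\A_{G/L})$, it stabilises $S$, and its induced bijection on $S$ is $\sigma$, so $\sigma\in\aut_{G/L}(\A_{G/L})^S$. This establishes the required inclusion.

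Combining the two inclusions gives $\aut_{G/L}(\A_{G/L})^S=\aut_S(\A_S)$, whence the product $\aut_U(\A_U)^S\,\aut_{G/L}(\A_{G/L})^S$ equals $\aut_S(\A_S)$ as well; the hypothesis of Proposition~\ref{KR} is met and $\A$ is CI. The only point requiring care is the bookkeeping of the section notation $\Delta^S$—verifying that $\sigma\times\mathrm{id}_H$ genuinely maps $U/L$ to itself and induces $\sigma$ on the section $S$—but this is routine once the direct-product structure $G/L=S\times H$ supplied by the tensor product is made explicit, and I do not expect a serious obstacle.
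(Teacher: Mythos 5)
Your proposal is correct and follows essentially the same route as the paper: the paper's proof consists of the observation that $\aut_{G/L}(\A_{G/L})^S \geqslant \bigl(\aut_S(\A_S)\times\aut_H(\A_H)\bigr)^S=\aut_S(\A_S)$ together with the general containment $\aut_{G/L}(\A_{G/L})^S\leqslant\aut_S(\A_S)$, and your map $\sigma\times\mathrm{id}_H$ is exactly this product-group argument restricted to the factor $\aut_S(\A_S)\times\{\mathrm{id}_H\}$.

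One step needs tightening, though. You justify $\sigma\times\mathrm{id}_H\in\aut_{G/L}(\A_{G/L})$ by saying it sends each basic set $XY$ to $X^\sigma Y$, ``which is again a basic set.'' That criterion is too weak: since $\aut(\A_{G/L})=\bigcap_{Z\in\cS(\A_{G/L})}\aut(\cay(G/L,Z))$, a group automorphism lies in $\aut_{G/L}(\A_{G/L})$ if and only if it fixes \emph{every basic set setwise}; merely permuting the basic sets does not suffice. (For instance, every group automorphism permutes the singleton basic sets of $\Z (G/L)$, yet $\aut_{G/L}(\Z(G/L))$ is trivial.) Fortunately, the stronger statement is exactly what your hypothesis provides: $\sigma\in\aut_S(\A_S)=\aut(\A_S)\cap\aut(S)$ forces $X^\sigma=X$ for every $X\in\cS(\A_S)$, whence $(XY)^{\sigma\times\mathrm{id}_H}=X^\sigma Y=XY$, and the required membership follows. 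With that one-line repair your argument is complete and coincides with the paper's.
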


\begin{proof}
The following containment is clear:
$$
\aut_{G/L}(\A_{G/L})^S \geqslant (\aut_S(\A_S) \times
\aut_H(\A_H))^S =\aut_S(\A_S).
$$
On the other hand, $\aut_{G/L}(\A_{G/L})^S\leqslant \aut_S(\A_S)$ and therefore,
$\aut_{G/L}(\A_{G/L})^S=\aut_S(\A_S)$. Then $\A$ is CI by Proposition~\ref{KR}.
\end{proof}

\begin{lem}\label{centre}
Let $G$ be an abelian group, $A \in \Sup^{\min}(G\r)$ and $\A=V(G,A_e)$.
Suppose that $\A$ is indecomposable and
$L$ is an $\A$-subgroup of prime order. Then
$\rho_G(L) \leqslant Z(A)$. Moreover, for each $u \in L$, $\{u\} \in \cS(\A)$.
\end{lem}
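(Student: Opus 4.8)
The plan is to use the $\preceq_{G}$-minimality of $A$ to reduce everything to a statement about a single regular subgroup, and then to extract the needed rigidity from indecomposability. Write $\ell:=|L|$ (a prime), let $K:=\ker\big(A\to\sym(G/L)\big)$ be the kernel of the action of $A$ on the $L$-cosets (these form an $A$-invariant block system since $L$ is an $\A$-subgroup), and set $N:=\rho_G(L)$. As $G$ is abelian, $N$ fixes every $L$-coset setwise and acts regularly on each of them, so $N=G\r\cap K$ and the $N$-orbits are exactly the blocks. If $L=G$ (so $|G|=\ell$) the statement is immediate, because minimality forces $A=G\r$; so assume $L<G$. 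By the argument of Lemma~\ref{sylow}, minimality gives $A=\langle F_1,\dots,F_k\rangle$, where $F_1=G\r,\,F_2,\dots,F_k$ represent the conjugacy classes of regular subgroups of $A$ isomorphic to $G$. Hence it suffices to show that every such regular (abelian) subgroup $F$ centralises $N$: then $A=\langle F_i\rangle\le C_A(N)$, i.e. $N\le Z(A)$. For a fixed regular abelian $F$ all block-stabilisers coincide (as $F$ is abelian and transitive on blocks) and equal $L_F:=F\cap K$, a cyclic group of order $\ell$ acting regularly on each block, and $F$ centralises $L_F$. So Part~1 reduces to the single claim $L_F=N$ for every regular abelian $F\le A$.

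To prove $L_F=N$ I would feed indecomposability into the inter-block structure. Since $\A$ is indecomposable it is not the wreath product $\A_L\wr\A_{G/L}$, so there is a basic set $X\subseteq G\setminus L$ with $L\not\le\rad(X)$; consequently some coset $Ld$ meets $X$ in a set of size strictly between $0$ and $\ell$. Using the identifications $Lg\leftrightarrow L$, $vg\leftrightarrow v$, the colour $X$ induces, between any two blocks whose difference in $G/L$ equals $\bar d$, one and the same circulant bipartite graph on $L\cong\Z/\ell$ with connection set $T:=(X\cap Ld)d^{-1}$, where $1\le|T|\le\ell-1$ (the connection set depends only on $\bar d$ because $X$ is $G\r$-invariant). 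Every $a\in A$ preserves the colour $X$ and preserves differences in the abelian quotient $G/L$, so on each such pair of blocks the restrictions of $a$ give an isomorphism between two copies of this one fixed bipartite graph.

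The key point — and the main obstacle — is now a Burnside-type rigidity: because $T$ is a proper nonempty subset of the prime cyclic group $\Z/\ell$, the automorphism group of this bipartite graph is not $2$-transitive on either side, hence by Burnside's theorem on permutation groups of prime degree its induced action on each side lies in $\mathrm{AGL}(1,\ell)$. Thus the permutation induced by $a$ on each block is affine, and comparing the two sides forces a common multiplier; letting $D\subseteq G/L$ be the set of all differences $\bar d$ arising from such basic sets $X$, the multiplier of $a$ is constant along every $D$-edge. One checks that $D$ generates $G/L$: otherwise $\langle D\rangle$ is a proper $\A_{G/L}$-subgroup $\bar H$ (the function $\bar d\mapsto|X\cap Ld|$ is constant on the basic sets of $\A_{G/L}$, since $\underline{X}\,\underline{L}\in\A$), its preimage $H$ is an $\A$-subgroup with $L\le H<G$, and $\A=\A_H\wr_{H/L}\A_{G/L}$, contradicting indecomposability. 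Consequently every $a\in A$ acts affinely on all blocks with a single multiplier $\mathrm m(a)\in(\Z/\ell)^{\times}$, so $N\trianglelefteqslant A$ and $a\mapsto\mathrm m(a)$ is exactly the conjugation action of $A$ on $N$.

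Finishing is then routine. A generator of $L_F\le K$ fixes every block and is regular on each, so its affine action on each block has order $\ell$ and hence is a translation; the bipartite rigidity forces the translation amount to be constant along $D$-edges, hence (as $D$ generates $G/L$) constant on all blocks, i.e. this generator is the constant diagonal translation $\rho_G(u)$ for some $u\in L$. Thus $L_F\le N$, and by order $L_F=N$, completing Part~1: $N=\rho_G(L)\le Z(A)$. For the moreover part, centrality gives, for $u\in L$ and $\alpha\in A_e$, the identity $u^{\alpha}=(e^{\rho_G(u)})^{\alpha}=(e^{\alpha})^{\rho_G(u)}=u$, so $A_e$ fixes $L$ pointwise; hence every $\{u\}$ with $u\in L$ is an $A_e$-orbit, i.e. a basic set of $\A$. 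The delicate steps to be written carefully are the Burnside rigidity of the bipartite graph (equivalently, ruling out a $2$-transitive action on a single block, which would make $\A_L$ of rank $2$) and the verification that $D$ is an $\A_{G/L}$-set that generates $G/L$.
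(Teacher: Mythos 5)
Your outer frame is sound and in fact coincides with the paper's: writing $K$ for the kernel of $A$ acting on the $L$-cosets, $\preceq_{G}$-minimality gives that $A$ is generated by its regular subgroups isomorphic to $G$, each such abelian $F$ centralises $F\cap K$, so the lemma reduces to showing $F\cap K=\rho_G(L)$ for every regular abelian $F\leqslant A$; your verification that the set $D$ of ``mixed'' differences is an $\A_{G/L}$-set which must generate $G/L$ (else $\A$ would be a non-trivial $H/L$-wreath product) is also correct. The proposal collapses, however, exactly at the step you call the key point, the ``Burnside-type rigidity''. It is false that for every proper non-empty $T\subseteq L\cong\Z/\ell\Z$ the side-preserving automorphism group of the bipartite graph $\{(a,b):ba^{-1}\in T\}$ acts on each side inside $\mathrm{AGL}(1,\ell)$. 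Your hypotheses allow $|T|=1$ (i.e.\ $|X\cap Ld|=1$), in which case the graph is a perfect matching and that group is the full diagonal $\{(\sigma,\phi\sigma\phi^{-1}):\sigma\in\sym(L)\}$, acting as all of $\sym(L)$ on either side --- not inside $\mathrm{AGL}(1,\ell)$ once $\ell\geqslant 5$; the case $|T|=\ell-1$ is dual. Even for $2\leqslant|T|\leqslant\ell-2$ the claim can fail: for $\ell=7$ and $T=\{1,2,4\}$ the graph is the incidence graph of the Fano plane, whose side-preserving automorphism group is $2$-transitive on each side. So a single colour carries no rigidity at all, and your fallback --- ``ruling out a $2$-transitive action on a single block, which would make $\A_L$ of rank $2$'' --- is not available: $\A_L$ of rank $2$ is not in itself absurd, and excluding it is essentially equivalent to the ``moreover'' assertion of the lemma, so the argument is circular precisely where it matters. (A secondary error: a general $a\in A$ does \emph{not} ``preserve differences in $G/L$'' --- only elements of $G\r K$ behave this way --- so your claim that every element of $A$ acts affinely on all blocks with a single multiplier would not follow even if the rigidity held; luckily your final step only needs elements of $K$, which do fix the block pairs being compared.)

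The paper avoids this obstruction by arguing with the group $K$ itself rather than with one colour graph, and this is the idea your proposal is missing. It compares the pointwise stabilisers $K_{Lx}$: since $K_L\trianglelefteqslant K$ and $K$ is transitive, hence primitive, of prime degree on each block, for every coset $Lx$ either $K_{Lx}=K_L$ or $K_L$ acts transitively on $Lx$. The union $U$ of the cosets of the first kind is an $\A$-subgroup containing $L$, and if $U\neq G$ then every basic set outside $U$ is a union of $L$-cosets, so $\A$ is a non-trivial $U/L$-wreath product, contradicting indecomposability. Hence $U=G$, i.e.\ $K$ is faithful on $L$; consequently every $F\cap K$ (with $F\leqslant A$ regular abelian) is a Sylow $\ell$-subgroup of $K$, and the Sylow-type alignment (as in the proof of Lemma~\ref{sylow}) identifies all of them with $\rho_G(L)$, giving $\rho_G(L)\leqslant Z(A)$; centrality then yields the singleton basic sets. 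In short, the paper's rigidity comes from the normality of $K_L$ in $K$ together with primitivity of prime degree --- a mechanism that uses all colours at once --- whereas a single bipartite colour, as the matching and Fano examples show, can never supply it.
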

\begin{proof}
Let $p=|L|$ and write $\hat{L}=\rho_G(L)$.
Let $K$ be the kernel of the action of $A$ on the set of $L$-cosets
in $G$.
For $x \in G$, let $K_{Lx}$ denote the pointwise stabiliser of
$Lx$ in $K$. Define the binary relation $\sim$ on the set of
$L$-cosets in $G$ by $Lx \sim Ly$ if and only if $K_{Lx}=K_{Ly}$.
It is obvious that $\sim$ is an equivalence relation.
Also, for arbitrary $\gamma \in A$, $K_{(Lx)^\gamma}=(K_{Lx})^\gamma$, implying that $\sim$ is also $A$-invariant.
This shows that the set $\{Lx : Lx \sim L\}$ is a block for $A$ acting on
the set of $L$-cosets in $G$. Consequently, the set
$U:=\bigcup_{L_x \sim L}Lx$ is a block
for $A$ acting on $G$, and so $U$ is an $\A$-subgroup.
Clearly, $L \leqslant U$.

Let $\gamma \in K_{Lx}$ for some $x \in U$. By the definition of $U$,
$\gamma \in K_U$, the pointwise stabiliser of $U$ in $K$.
In other words, $K^U$ is faithful on $Lx$ for every $x \in U$.

Assume for the moment that $U < G$.
Let $x \notin U$. The group $K$ acts primitively on $Lx$ because $|Lx|=p$, and
$K_L \trianglelefteqslant K$.
Since $x \notin U$, $L \not\sim Lx$, and hence $(K_L)^{Lx} \ne 1$.
We obtain that the orbit $x^{K_L}=Lx$, so $L \leqslant \rad(x^{A_e})$.
This shows that $\A$ is the
non-trivial $U/L$-wreath product, a contradiction.
Thus $U=G$.
As $K=K^U$ is faithful on $L$, $\hat{L}$ is the unique
Sylow $p$-subgroup of $K$. On the other hand, if $F \leqslant A$ is
any abelian regular subgroup, then $K \cap F$ has order $p$, and thus
we find $K \cap F=\hat{L}$, in particular, $\hat{L} \leqslant Z(F)$.
This yields $\hat{L} \leqslant Z(A)$ because
$A$ is generated by
its regular subgroups isomorphic to $G$ due to the condition $A \in \Sup^{\min}(G\r)$.

For the second assertion, choose $u \in L$ and let $X \in \cS(\A)$ be the basic
set containing $u$. Then $X=u^{A_e}$ and as $\hat{L} \leqslant Z(A)$, we obtain
$X=e^{\rho_G(u)A_e}=(e^{A_e})^{\rho_G(u)}=\{u\}$.
\end{proof}
%-------------------------------------------------------------------------------------------------------------%
\section{Proof of Theorem~\ref{main1}}\label{sec:proof1}
Throughout this section we keep the following notation:
\begin{quote}
$G \cong C_p \times C_n$ for a prime $p$ and a square-free number $n$
(that may or may not be divisible by $p$),
$A \in \Sup^{\min}(G\r)$ and $\A=V(G,A_e)$.
\end{quote}

In view of Corollary~\ref{ci}, it is sufficient to show that
$\A$ is CI.
We proceed by induction on the total number of prime divisors (counted with multiplicities) of $|G|$,
that we will denote by $\Omega(|G|)$.

If $\Omega(|G|)=1$, then $G \cong C_p$. It follows from
Proposition~\ref{B} via Sylow's theorem
that $A=G\r$, hence $\A=\Z G$, which is clearly CI.

Assume that $\Omega(|G|) > 1$ and the assertion
holds for any group $\tilde{G} \cong C_{\tilde{p}} \times
C_{\tilde{n}}$, where $\tilde{p}$ is a prime, $\tilde{n}$ is square-free and
$\Omega(|\tilde{G}|) < \Omega(|G|)$.
Note that, this implies that every schurian S-ring over $\tilde{G}$ is CI.

If $G \cong C_p^2$, then
$\A \cong \Z C_p^2$ or $\Z C_p \wr \Z C_p$
by Proposition~\ref{KM2} and \eqref{eq:p2-p}.
In either case, $\A$ is CI (for the latter S-ring, see Corollary~\ref{ci-wp}).

Now, let $n_{p^\prime} > 1$, where $n_{p^\prime}$ denotes
the $p^\prime$-part of $n$. Let
$n_{p^\prime}=q_1 \cdots q_k$ be the
prime decomposition of $n_{p^\prime}$, $P$ be the Sylow $p$-subgroup and $C$ be
the Hall $p^\prime$-subgroup of $G$.  Then $P \cong C_p$ or $C_p^2$ and
$C \cong C_{n_{p^\prime}}$.
For $1 \leqslant i \leqslant k$, let $Q_i$ be the least $\A$-subgroup of $G$ of
order divisible by $q_i$, and $H_i$ be the
unique maximal $\A$-subgroup of order coprime to $q_i$.
\medskip

\noindent{\bf Claim~1.}
$\A$ is CI, unless $H_iQ_i \ne G$ for every $1 \leqslant i \leqslant k$.
\medskip

Suppose that $H_iQ_i=G$ for some $1 \leqslant i \leqslant k$.
Then $\A=\A_{H_i} \star \A_{Q_i}$.
This follows from Proposition~\ref{MS2} if $|G/H_i| \ne q_i$, and from
Propositions~\ref{KM2}~and~\ref{MS2} if $|G/H_i|=q_i$.

If $Q_i/(Q_i \cap H_i) \not\cong G$, then the induction hypothesis guarantees that both
$\A_{H_i}$ and $\A_{Q_i/(Q_i \cap H_i)}$ are CI, and hence so is $\A$ by  Proposition~\ref{KM1}.

Let $Q_i/(Q_i \cap H_i) \cong G$.
Then $Q_i=G$, $H_i=\{e\}$, and these imply that
$\A$ is primitive. By Proposition~\ref{W}, $\rk(\A)=2$,
and so $\A$ is CI in this case as well. This completes the proof of Claim~1.
\medskip

\noindent{\bf Claim~2.}  $\A$ is CI, unless $C$ is
an $\A$-subgroup and $\A_{G/C} \cong \Z C_p^2$.
\medskip

In view of Claim~1, we may assume that
$H_iQ_i \ne G$ for every $1 \leqslant i \leqslant k$.
Then $\A$ is the non-trivial $H_iQ_i/Q_i$-wreath product by Proposition~\ref{MS1}.

Assume first that $P \cong C_p$, i.e., $G$ is a cyclic group.
Let $X \in \cS(\A)$ be a basic set containing a generator of $G$, say $x$, and let
$V=\rad(X)$. Then for every $1 \leqslant i \leqslant k$, $x \notin H_iQ_i$, and so
$V \geqslant Q_i$. We obtain $V=C$, in particular,
$\underline{C} \in \A$. By Proposition~\ref{KM2}, $\A_{G/V} \cong \Z C_p$, and it
follows that $X=Vx$. This and Proposition~\ref{SW}(i)
imply that $\A=\A_V \wr \A_{G/V}$, hence $\A$ is CI by Corollary~\ref{ci-wp}.

Now, suppose that $P \cong C_p^2$ and let $c$ be a generator of $C$.

Assume for the moment that some cyclic subgroup of index $p$ is not an $\A$-subgroup, i.e.,
$\underline{\langle xc \rangle} \notin \A$ for some $x \in P^\#$.
Let $X \in \cS(\A)$ be the basic set containing $xc$ and let $V=\rad(X)$.
If $xc\in H_iQ_i$ for some $1 \leqslant i \leqslant k$, then $\langle xc \rangle=H_iQ_i$
because $|\langle xc \rangle|=|G|/p$ and $H_iQ_i < G$,
and this contradicts that $\underline{\langle xc \rangle} \notin \A$.
Thus $xc \notin H_iQ_i$ for every $1 \leqslant i \leqslant k$, and one finds as above
that $V \geqslant C$. If $V=C$, then $\underline{C} \in \A$.
The basic set  $X/V \in \cS(\A_{G/V})$ satisfies $|\rad(X/V)|=1$.
On the other hand, $\A_{G/V}=\A_{G/C} \cong \Z C_p^2$ or
$\Z C_p \wr \Z C_p$ by Proposition~\ref{KM2} and \eqref{eq:p2-p}.
We conclude that $X=Vx$, and so $\underline{\langle xc \rangle}=\underline{\langle X \rangle} \in \A$, which is impossible. Thus $V > C$, and it can be deduced from this
in the same way as before that $\A=\A_V \wr \A_{G/V}$, so $\A$ is CI.

To sum up, $\A$ is CI, unless $\underline{\langle xc \rangle} \in \A$
for every $x \in P^\#$. It is easy to see that this implies
$\underline{C} \in \A$ and $\A_{G/C} \cong \Z C_p^2$.
\medskip

\noindent{\bf Claim~3.}  $\A$ is CI.
\medskip

In view of Claim~2, we may assume that $\underline{C} \in \A$ and
$\A_{G/C} \cong \Z C_p^2$.
By Proposition~\ref{HM}, $\A$ is CI exactly when
$\iso_e(\A)=\aut(\A)_e \aut(G)$.
Let $\varphi \in \iso_e(\A)$. We finish the proof of Claim~3 by finding an automorphism
$\alpha \in \aut(G)$ such that
\begin{equation}\label{eq:Xalpha}
\forall X \in \cS(\A):~X^\varphi=X^\alpha.
\end{equation}

By Proposition~\ref{iso}(i), $C^\varphi$ is a subgroup of $G$ of order
$n_{p^\prime}$. Thus $C^\varphi=C$, and the restriction
$\varphi_C$ induces a normalised isomorphism of $\A_C$, see Proposition~\ref{iso}(i).
Furthermore, $\varphi^{G/C}$ is  a normalised isomorphism of $\A_{G/C}$ defined
in Proposition~\ref{iso}(iii).
Since both $\A_C$ and $\A_{G/C}$ are schurian, these are also CI by the induction hypothesis. Thus there exists
$\alpha_1 \in \aut(C)$ such that
\begin{equation}\label{eq:a1}
\forall X \in \cS(\A), X \subseteq C:~ X^{\varphi_C}=X^{\alpha_1}.
\end{equation}
Also, there exists $\alpha_2 \in \aut(G/C)$ such that
\begin{equation}\label{eq:a2}
\forall X \in \cS(\A):~ (X^{\pi_{G/C}})^{\varphi^{G/C}}=
(X^{\pi_{G/C}})^{\alpha_2}.
\end{equation}
Since $G \cong C \times G/C$, there exists a unique automorphism
$\alpha \in \aut(G)$ such that
$\alpha^C=\alpha_1$ and $\alpha^{G/C}=\alpha_2$.
We claim that $\alpha$ satisfies the condition in Eq.~\eqref{eq:Xalpha}.

If $X \in \cS(\A)$, $X \subseteq C$, then by Eq.~\eqref{eq:a1}, $X^\varphi=X^{\varphi_C}=X^{\alpha_1}=X^\alpha$.

Let $X \in \cS(\A)$, $X \not\subseteq C$. Since $\A_{G/C} \cong \Z C_p^2$,
$X \subseteq Cx$ for some element $x \in P^\#$.
Let $U=\langle C, x \rangle$. Then $U=\langle X \rangle C$,
showing that $\underline{U} \in \A$.
Let $P_1$ be the minimal $\A$-subgroup contained in $U$ whose order is divisible by
$p$. By Proposition~\ref{MS2}, $\A_U=\A_C \star \A_{P_1}$. Moreover,
letting $D=P_1 \cap C$,
the basic set $X$ can be written in the form
\begin{equation}\label{eq:x}
X=YDx,~Y \in \cS(\A),~Y \subseteq C.
\end{equation}

Assume first that $Y=\{e\}$ in Eq.~\eqref{eq:x}, i.e., $X=Dx$.
Let $\psi=\varphi \alpha^{-1} \in \sym(G)$. Then $\psi \in \iso_e(\A)$.
Using Proposition~\ref{iso}(ii)--(iii) and Eq.~\eqref{eq:a2}, we can write
$$
(Cx)^\varphi=Cx^\varphi=(Cx)^{\varphi^{G/C}}=(Cx)^{\alpha_2}=(Cx)^\alpha.
$$
This shows that $(Cx)^\psi=Cx$ and so $U^\psi=U$. Thus $P_1^\psi \leqslant U$,
and as $|P_1^{\,\psi}|=|P_1|$ also holds, $P_1^{\,\psi}=P_1$. We conclude
$$
(Dx)^\psi=(Cx \cap P_1)^\psi=Cx \cap P_1=Dx.
$$
Equivalently, $(Dx)^\varphi=(Dx)^\alpha$.

Finally, let $Y \ne \{e\}$ in Eq.~\eqref{eq:x}. Then by Eq.~\eqref{eq:XiXj}, $X^\varphi=(YDx)^\varphi=Y^\varphi (Dx)^\varphi=Y^\alpha (Dx)^\alpha=
X^\alpha$. This completes the proof of Claim~3 as well as the proof of Theorem~\ref{main1}.
%-------------------------------------------------------------------------------------------------------------%
\section{Primitive rational S-rings over $C_p^2 \times C_q^2$ and translation nets}
\label{sec:S-r4}
Let $G$ be an abelian group and $\exp(G)$ be its exponent.
Let $\P(G)$ be the subgroup of $\aut(G)$ consisting of
the power automorphisms
$$
\pi_m : x \mapsto x^m,~x\in G,
$$
where $1 \leqslant m \leqslant \exp(G)$ and $\gcd(m,\exp(G))=1$.

The \emph{trace} $\trX$ of a subset $X \subseteq G$ is defined by
$\trX=\bigcup_{\pi_m\in \P(G)}X^{\pi_m}$.
The cyclotomic S-ring $\cyc(\P(G),G)$ is also known as the \emph{complete S-ring of traces}
over $G$ and denoted by $W(G)$.
If $\A$ is an S-ring over $G$, then its \emph{rational closure} $\trA$ is defined by
$\trA=\A \cap W(G)$. The S-ring $\A$ is called \emph{rational} if $\trA=\A$,
i.e., $\A \subseteq W(G)$.  In this case $\trX=X$ holds for every basic set
$X\in \cS(\A)$. We also say that $X$ is rational if $\trX=X$.

\begin{lem}\label{rank2}
\begin{enumerate}[{\rm (i)}]
\item Let $\A$ be an S-ring over the abelian group $G$ and $X\in \cS(\A)$ be a basic set.
If $X$ contains elements of coprime orders, then $X$ is rational.
\item Let $G$ be an abelian group whose order is divisible by at least two distinct primes
and let $\A$ be an S-ring over $G$. If $\trA$ is of rank 2, then so is $\A$.
\end{enumerate}
\end{lem}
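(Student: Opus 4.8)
For part (i), let $x,y\in X$ have coprime orders $a$ and $b$. Proposition~\ref{SW}(i) gives that $X^{(m)}\in\cS(\A)$ for every $m$ with $\gcd(m,|G|)=1$, and since distinct basic sets are disjoint, each such $X^{(m)}$ is either equal to or disjoint from $X$. The plan is to prove that $X^{(m)}=X$ for every $m$ coprime to $\exp(G)$, which is precisely the rationality of $X$. The engine of the argument is to produce, for a prescribed $m$, a single power automorphism that fixes $y$ yet acts on $x$ as the $m$-th power map. Writing $\exp(G)=\prod_i p_i^{e_i}$ and using that the prime divisors of $a$ are disjoint from those of $b$, I would invoke the Chinese Remainder Theorem to choose an integer $m'$ with $m'\equiv m\pmod{p_i^{e_i}}$ for every $p_i\mid a$ and $m'\equiv 1\pmod{p_i^{e_i}}$ for all remaining $p_i$. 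Each of these residues is a unit, so $\gcd(m',\exp(G))=1$ and $\pi_{m'}\in\P(G)$.

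By the choice of $m'$ we have $m'\equiv 1\pmod b$, hence $y^{m'}=y$; thus $y\in X^{(m')}\cap X$ and so $X^{(m')}=X$. It follows that $x^{m'}\in X$, and because $m'\equiv m\pmod a$ this says $x^m\in X$. As $m$ was an arbitrary unit modulo $\exp(G)$, we obtain $x^m\in X^{(m)}\cap X$ for all such $m$, forcing $X^{(m)}=X$ throughout, so that $\trX=\bigcup_{\pi_m\in\P(G)}X^{(m)}=X$ and $X$ is rational. The one point requiring care is the bookkeeping in the CRT step: one must check that the prescribed residues make $m'$ a global unit while simultaneously realising $m'\equiv m\pmod a$ and $m'\equiv 1\pmod b$.

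For part (ii) the first move is to reinterpret the hypothesis. For any non-identity basic set $X\in\cS(\A)$, the trace $\trX=\bigcup_{\pi_m\in\P(G)}X^{(m)}$ is $\P(G)$-invariant, hence a $W(G)$-set, and it is a union of basic sets $X^{(m)}$ by Proposition~\ref{SW}(i), hence also an $\A$-set; therefore $\underline{\trX}\in\A\cap W(G)=\trA$. As $\trA$ has rank $2$, its basic sets are only $\{e\}$ and $G^\#$, so the nonempty subset $\trX\subseteq G^\#$ must equal $G^\#$. Thus the hypothesis is equivalent to the statement that every non-identity basic set has trace $G^\#$.

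It then remains to exhibit in a single basic set two elements of coprime orders and apply part~(i). Fix distinct primes $p,q$ dividing $|G|$, pick an element $z$ of order $p$, and let $X$ be the basic set containing it; then $\trX=G^\#$ contains some element of order $q$. Writing that element as $v^m$ with $v\in X$ and $m$ coprime to $\exp(G)$, and noting that $m$ is then coprime to the order of $v$ so that $v^m$ and $v$ have the same order, we find $v\in X$ of order $q$. Hence $X$ contains $z$ and $v$ of coprime orders, so part~(i) makes $X$ rational, giving $X=\trX=G^\#$ and $\rk(\A)=2$. I expect the only genuine obstacle to be the reformulation step, where one must verify that $\trX$ is at once an $\A$-set and a $W(G)$-set so that it lands in $\trA$; once that is secured, part~(i) carries the rest.
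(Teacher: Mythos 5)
Your proof is correct and follows essentially the same route as the paper's: part (i) is the same Chinese Remainder Theorem argument combined with Proposition~\ref{SW}(i) and disjointness of basic sets (the paper factors $m\equiv m_1m_2$ with each factor fixing one of the two elements, while you build a single unit fixing $y$ and acting as $m$ on $x$ --- a cosmetic difference), and part (ii) matches the paper's argument, with your reformulation step ($\underline{\trX}\in\trA$, hence $\trX=G^\#$) simply making explicit what the paper leaves implicit.
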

\begin{proof}
(i): Assume that $x_1,x_2\in X$ have coprime orders. Then we can write $G$ as a
direct product $G=G_1\times G_2$, where $x_1\in G_1$, $x_2\in G_2$ and $\gcd(|G_1|,|G_2|)=1$.
Let $m$ be an integer such that $\gcd(m,\exp(G))=1$. We have to show that $X^{\pi_m}=X$.
By the Chinese remainder theorem we can find $m_1$ and $m_2$ satisfying
\begin{gather*}
m_1 \equiv 1 \pmod {\exp(G_1)}, \quad  m_1 \equiv m \pmod {\exp(G_2)},\\
m_2 \equiv m \pmod {\exp(G_1)}, \quad  m_2 \equiv 1 \pmod {\exp(G_2)}.
\end{gather*}
Then  $m_1m_2 \equiv m \pmod {\exp(G)}$. By Proposition~\ref{SW}(i) we have that
$X^{\pi_{m_1}}, X^{\pi_{m_2}}\in \cS(\A)$. Since $x_1\in X\cap X^{\pi_{m_1}}$ and
$x_2\in X\cap X^{\pi_{m_2}}$ we obtain $X^{\pi_{m_1}}=X^{\pi_{m_2}}=X$, hence
$X^{\pi_m}=X$ as well.

(ii): Let $\{e\}\ne X\in \cS(\A)$. Then $\trX=G^\#$, hence $X$ contains elements of
every prime order occurring in $G$. By (i), it follows that $X=G^\#$.
\end{proof}

\medskip

For the next lemma we define a particular subgroup of $\P(G)$.
If $p$ is a prime divisor of $|G|$, then let
$$
\P_p(G)=\{\pi_m \in \P(G) : m \equiv 1 \pmod {\exp(G)_{p^\prime}}\}.
$$

\begin{lem}\label{trivial}
Let $G$ be an abelian group with Sylow $p$-subgroup $G_p\cong C_p^2$ and assume that $G_p\ne G$.
Let $\A$ be a primitive S-ring over $G$, $X \in \cS(\A)$ a $\P_p(G)$-invariant basic set
and $x \in G^\#$ a $p^\prime$-element. Then $X \cap G_px$ is one of the following sets:
$\emptyset$, $Rx$ or $(G_p\setminus R)x$ for a subgroup $R \leqslant G_p$ of order $p$, $G_p$.
\end{lem}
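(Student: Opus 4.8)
The plan is to first pin down, using the hypothesis on $\P_p(G)$, the shape of $Y:=X\cap G_px$ up to the number of ``directions'' it contains, and then to use Proposition~\ref{SW}(ii) together with primitivity to force a divisibility by $p$ that eliminates all intermediate cases.

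Write $G=G_p\times G_{p'}$ with $G_p\cong C_p^2$ the Sylow $p$-subgroup and $G_{p'}$ the Hall $p'$-subgroup; since $G_p$ has exponent $p$ one has $G[p]=G_p$, and $x$ is the unique $p'$-element of the coset $G_px=xG_p$. First I would record that $\P_p(G)$ induces on $G_p$ the full group of scalar maps $g\mapsto g^a$, $a\in\{1,\dots,p-1\}$: given $a$, the Chinese remainder theorem produces $m$ with $m\equiv a\pmod p$ and $m\equiv 1\pmod{\exp(G)_{p'}}$, whence $\pi_m\in\P_p(G)$ acts as the scalar $a$ on $G_p$ and trivially on $G_{p'}$, so $\pi_m$ fixes $G_px$ setwise and sends $gx\mapsto g^ax$. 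Consequently $Y$ is invariant under all scalars, so $S:=Yx^{-1}\subseteq G_p$ is a union of the identity $\{e\}$ (present exactly when $x\in X$) together with some of the $p+1$ punctured subgroups $R\setminus\{e\}$ of order $p$. Writing $\epsilon\in\{0,1\}$ for the first contribution and $\ell$ for the number of such subgroups, one has $0\le\ell\le p+1$ and $|S|=\epsilon+(p-1)\ell\equiv\epsilon-\ell\pmod p$.

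The crux is the claim $p\mid|Y|$. Here I would apply Proposition~\ref{SW}(ii) to the basic set $X$: for each $k\in\{1,\dots,p-1\}$ the set $X^{[p,k]}$ is an $\A$-set. Because $G[p]=G_p$ has exponent $p$, every $p$-th power of an element of $G$ lies in $G_{p'}$, so $X^{[p,k]}\subseteq G_{p'}$. Thus $\langle X^{[p,k]}\rangle$ is an $\A$-subgroup contained in the proper subgroup $G_{p'}\ne G$; since $\A$ is primitive this forces $\langle X^{[p,k]}\rangle=\{e\}$, i.e.\ $X^{[p,k]}\subseteq\{e\}$. In particular no $y\in X$ lying outside $G_p$ can satisfy $|X\cap yG_p|\equiv k\pmod p$ for any $k\in\{1,\dots,p-1\}$, for then $y^p\ne e$ would be a non-identity element of some $X^{[p,k]}$. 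Applying this to any $y\in Y$ (which has $p'$-part $x\ne e$, hence lies outside $G_p$) and using $X\cap yG_p=X\cap G_px=Y$ gives $|Y|\equiv 0\pmod p$.

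Finally I would combine the two ingredients. From $p\mid|S|$ we get $\ell\equiv\epsilon\pmod p$; since $\epsilon\in\{0,1\}$ and $0\le\ell\le p+1$, the only possibilities are $(\epsilon,\ell)\in\{(0,0),(1,1),(0,p),(1,p+1)\}$, yielding respectively $S=\emptyset$, $S=R$ with $|R|=p$, $S=G_p\setminus R$ with $|R|=p$, and $S=G_p$, that is $Y\in\{\emptyset,\,Rx,\,(G_p\setminus R)x,\,G_px\}$, which is the asserted list. I expect the main obstacle to be exactly the crux step: the non-obvious move is to read the mod-$p$ count of a single coset-intersection off the derived sets $X^{[p,k]}$ and then to annihilate these sets by primitivity via the containment $X^{[p,k]}\subseteq G_{p'}$. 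Once this divisibility is secured, the scalar-invariance forces the classification by a bare counting argument.
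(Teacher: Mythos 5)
Your proof is correct and follows essentially the same route as the paper: use Proposition~\ref{SW}(ii) together with the containment of the derived sets in the proper subgroup $G_{p'}$ and primitivity to force $p \mid |X \cap G_px|$, then use the scalar action of $\P_p(G)$ on $G_p$ to see that $(X\cap G_px)x^{-1}$ is a union of $\{e\}$ and punctured order-$p$ subgroups, and finish by the mod-$p$ count. The only cosmetic difference is that you annihilate each $X^{[p,k]}$, $1\leqslant k\leqslant p-1$, separately, whereas the paper works with the single set $X^{[p]}$; the two are interchangeable.
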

\begin{proof}
Consider the set $X^{[p]}$. It is contained in $G_{p^\prime}$,
the Hall $p^\prime$-subgroup of $G$, and
by Proposition~\ref{SW}(ii), it is an $\A$-set. If $p \nmid |X \cap G_px|$, then
$\langle X^{[p]} \rangle$ is a non-trivial proper $\A$-subgroup.
But this is impossible as $\A$ is primitive, hence $p \mid |X \cap G_px|$.
Now $X \cap G_px$ is mapped to itself by every automorphism in $\P_p(G)$, hence
$X \cap G_px=(X\cap\{x\})\cup \bigcup_{i=1}^m R_i^\#x$ with some $p$-element subgroups
$R_1,\dots,R_m\leqslant G_p$ ($0\leqslant m\leqslant p+1$). Thus
$|X \cap G_px|=f+m(p-1)$, where $f\in\{0,1\}$. As $p \mid |X \cap G_px|$, we obtain
that $(f,m)=(0,0), (1,1), (0,p)$ or $(1,p+1)$, and the result follows.
\end{proof}

We analyse rational S-rings over
$G \cong C_{p_1}^2 \times \cdots \times C_{p_k}^2$, where $p_1,\ldots,p_k$ are
pairwise distinct primes. Clearly,
$$
W(G)=W(G_{p_1})\otimes \cdots \otimes W(G_{p_k}),
$$
where $G_{p_i}$ is the Sylow $p_i$-subgroup of $G$.
The  basic sets of $W(G_{p_i})$ distinct from $\{e\}$
are in one-to-one correspondence with the $p_i+1$ proper non-trivial subgroups of $G_{p_i}$, denoted by $L_{i,1},\ldots,L_{i,p_i+1}$.
The basic set corresponding to $L_{i,j}$
is $X_{i,j}:=L_{i,j}^\#$. Furthermore, we set the notation $X_{i,0}$ for the basic set
$\{e\}$. Writing $[0,n]$ for $\{0,1,\ldots,n\}$, we obtain
$$
\cS(W(G))=\Big\{ \prod_{i=1}^{k}X_{i,t_i} : (t_1,\ldots,t_k) \in [0,p_1+1]\times\cdots\times [0,p_k+1] \Big\}.
$$
This shows that $W(G)$ is of rank $\prod_{i=1}^{k}(p_i+2)$.

Let now $\A$ be an arbitrary rational S-ring over $G$, i.e., $\A \subseteq W(G)$.
Every basic set of $\A$ is a union of basic sets of $W(G)$, and for this reason, it is
encoded by a non-empty subset of $[0,p_1+1]\times\cdots\times [0,p_k+1]$.
More precisely, if $T \subseteq [0,p_1+1]\times\cdots\times [0,p_k+1]$, then the corresponding basic set $X$ is given as
\begin{equation}\label{eq:X-rat}
X=\bigcup_{(t_1,\ldots,t_k) \in T}\, \prod_{i=1}^{k}X_{i,t_i}.
\end{equation}

Given an arbitrary subset $T \subseteq [0,p_1+1] \times \cdots \times [0,p_k+1]$,
a $(k-1)$-tuple $a \in \Z^{k-1}$ and a number $1 \leqslant i \leqslant k$, define the subset
$T_i(a) \subseteq [0,p_i+1]$ as follows:
$$
T_i(a)=\{t_i : \exists~t=(t_1,\ldots,t_k) \in T~\text{such that}~
(t_1,\ldots,t_{i-1},t_{i+1},\ldots,t_k)=a\}.
$$

\begin{lem}\label{Ti(a)}
With the notation as above, suppose that $\A$ is primitive,
$k \geqslant 2$ and
$T \subseteq [0,p_1+1] \times\cdots\times [0,p_k+1]$
corresponds to a basic set of $\A$. Then for each non-zero $(k-1)$-tuple
$a \in \Z^{k-1}$ and $1 \leqslant i \leqslant k$,
$$
T_i(a)=\emptyset~\text{or}~\{0,\ell\}~\text{or}~[0,p_i+1]
\setminus \{0,\ell\}~\text{or}~[0,p_i+1]
$$
for some $1 \leqslant \ell \leqslant p_i+1$.
\end{lem}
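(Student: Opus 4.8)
The plan is to fix a non-zero tuple $a \in \Z^{k-1}$ and index $i$, and to show that the ``slice'' $T_i(a) \subseteq [0,p_i+1]$ is forced into one of the four listed shapes by applying Lemma~\ref{trivial} to a suitably chosen $p_i^\prime$-element. The key observation is that $a$ encodes (via the other coordinates) a product of the form $x = \prod_{j \ne i} x_j$ with each $x_j$ lying in the appropriate $L_{j,a_j}^\#$ (or equal to $e$ when $a_j = 0$); since $a$ is non-zero, $x$ is a genuine $p_i^\prime$-element of $G^\#$. The coset $G_{p_i} x$ then meets the basic set $X$ (given by \eqref{eq:X-rat}) precisely in those $G_{p_i}$-components indexed by $T_i(a)$. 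Concretely, $X \cap G_{p_i}x$ is the product of $x$ with $\bigcup_{t_i \in T_i(a)} X_{i,t_i}$, where $X_{i,0} = \{e\}$ and $X_{i,t_i} = L_{i,t_i}^\#$ for $t_i \geqslant 1$.

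First I would verify that the basic set $X$ is $\P_{p_i}(G)$-invariant: this is immediate from rationality, since $X = \trX$ and $\P_{p_i}(G) \leqslant \P(G)$, so every power automorphism in $\P_{p_i}(G)$ fixes $X$ setwise. This lets me invoke Lemma~\ref{trivial} with the prime $p = p_i$, the Sylow subgroup $G_p = G_{p_i} \cong C_{p_i}^2$ (the hypothesis $G_p \ne G$ holds because $k \geqslant 2$), and the $p^\prime$-element $x$ just constructed. The lemma tells me that $X \cap G_{p_i} x$ is one of $\emptyset$, $Rx$, or $(G_{p_i} \setminus R)x$ for a subgroup $R \leqslant G_{p_i}$ of order $p_i$ or $p_i$ (i.e.\ $R$ either a line or all of $G_{p_i}$).

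The remaining step is purely a translation between the geometric description from Lemma~\ref{trivial} and the combinatorial index set $T_i(a)$. I would match up the cases as follows. The empty intersection gives $T_i(a) = \emptyset$. If $X \cap G_{p_i}x = Rx$ with $|R| = p_i$, then $R = L_{i,\ell}$ for exactly one $\ell$, and $R = \{e\} \cup L_{i,\ell}^\# = X_{i,0} \cup X_{i,\ell}$, so $T_i(a) = \{0,\ell\}$. The complementary set $(G_{p_i} \setminus L_{i,\ell})x$ consists of $e$ omitted together with all lines except $L_{i,\ell}$, which translates to $[0,p_i+1] \setminus \{0,\ell\}$; here I must be a little careful to confirm that $G_{p_i} \setminus L_{i,\ell}$ decomposes as the union of the punctured lines $L_{i,m}^\#$ for $m \ne \ell$, which holds because the $p_i+1$ lines partition $G_{p_i}^\#$. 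Finally $R = G_{p_i}$ gives $Rx = G_{p_i}x$, i.e.\ $T_i(a) = [0,p_i+1]$, while its complement is empty and is already covered.

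The main obstacle, such as it is, lies in this last bookkeeping: ensuring the correspondence between subgroups of $G_{p_i}$ and index subsets of $[0,p_i+1]$ is exactly right, in particular that the ``deleted identity'' bookkeeping (the value $0 \in T_i(a)$ recording whether $e$ lies in the slice) lines up with the $f \in \{0,1\}$ appearing inside the proof of Lemma~\ref{trivial}. Once that is checked, the four cases of Lemma~\ref{trivial} map bijectively onto the four asserted shapes of $T_i(a)$, completing the proof.
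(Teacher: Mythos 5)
Your proposal is correct and follows essentially the same route as the paper: both pick a $p_i'$-element $x$ from the product of the components $X_{j,a_j}$ encoded by $a$ (non-zero, since $a \ne 0$), observe that $X \cap G_{p_i}x = \bigl(\bigcup_{t \in T_i(a)} X_{i,t}\bigr)x$ via Eq.~\eqref{eq:X-rat}, apply Lemma~\ref{trivial} (whose $\P_{p_i}(G)$-invariance hypothesis holds by rationality, which the paper leaves implicit and you rightly make explicit), and then translate the four geometric outcomes into the four index-set shapes. The bookkeeping you flag at the end (punctured lines partitioning $G_{p_i}^\#$, and the role of $0 \in T_i(a)$ as recording the identity) is exactly the translation the paper performs.
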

\begin{proof}
Suppose that $T_i(a) \ne \emptyset$. It follows from Eq.~\eqref{eq:X-rat} that
$a_j \in [0,p_j+1]$ if $1 \leqslant j < i$
and $a_j \in [0,p_{j+1}+1]$ if $i \leqslant j \leqslant k-1$.
Let $X$ be the basic set corresponding to $T$. Then
$$
X \cap G_{p_i} \prod_{j=1}^{i-1}X_{j,a_j} \prod_{j=i}^{k-1} X_{j+1,a_j}
=\bigcup_{\ell \in T_i(a)} \Big( X_{i,\ell} \prod_{j=1}^{i-1}X_{j,a_j} \prod_{j=i}^{k-1} X_{j+1,a_j}\Big).
$$
Thus, for a fixed $x \in \prod_{j=1}^{i-1}X_{j,a_j} \prod_{j=i}^{k-1} X_{j+1,a_j}$,
$X \cap G_{p_i}x=\big(\bigcup_{j \in T_i(a)}X_{i,j}\big) x$.
Then $x \ne e$ because at least one of the entries $a_j$ is non-zero, and Lemma~\ref{trivial} can be applied to $X \cap G_{p_i}x$. We conclude
that, either there exists a subgroup $R\leqslant G_{p_i}$ of order $p_i$ such that
$\bigcup_{j \in T_i(a)} X_{i,j}=R$ or $G_{p_i} \setminus R$, or else 
$X_{i,j}=G_{p_i}$ (recall that, $T_i(a) \ne \emptyset$ was assumed at the beginning of the proof).
\end{proof}

The next theorem is the main result of this section, which is of independent interest.

\begin{thm}\label{primrat}
Suppose that $G \cong C_p^2 \times C_q^2$ for distinct odd primes $p,\, q$ and
$\A$ is a primitive rational S-ring over $G$ of rank at least $3$.
Then $\A$ has a basic set of the form
$$
H_1^\# \cup \cdots \cup H_m^\#,
$$
where each $H_i \leqslant G$ is of order $pq$ and $H_i \cap H_j=\{e\}$ if
$i \ne j$.
\end{thm}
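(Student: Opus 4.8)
The plan is to specialise the encoding of the rational S-ring to $k=2$ (so that $G_p\cong C_p^2$ and $G_q\cong C_q^2$) and to read the subgroups $H_i$ directly off a single, carefully chosen basic set. Writing $P=\{1,\dots,p+1\}$ and $Q=\{1,\dots,q+1\}$ for the index sets of the order-$p$ subgroups $L_{1,j}$ of $G_p$ and the order-$q$ subgroups $L_{2,j}$ of $G_q$, every basic set is encoded by a set $T\subseteq[0,p+1]\times[0,q+1]$ through Eq.~\eqref{eq:X-rat}. A subgroup of order $pq$ is exactly a product $H=L_{1,s}\times L_{2,t}$, and $H^\#$ is encoded by $\{(s,0),(0,t),(s,t)\}$; hence a basic set of the required shape is one whose mixed part $T\cap(P\times Q)$ is a partial matching $\{(s_i,t_i)\}$ with the $s_i$ and the $t_i$ pairwise distinct, together with the pure cells $(s_i,0)$ and $(0,t_i)$. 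The first task is to translate Lemma~\ref{Ti(a)}: for fixed nonzero $t\in Q$ the ``column'' $T_1(t)$ takes one of the four values $\emptyset$, $\{0,\ell\}$, $[0,p+1]\setminus\{0,\ell\}$ or $[0,p+1]$, which by Lemma~\ref{trivial} correspond respectively to the empty fibre, the \emph{thin} fibre $L_{1,\ell}x$, the \emph{cothin} fibre $(G_p\setminus L_{1,\ell})x$ and the full fibre $G_px$, the distinguished subgroup being $L_{1,\ell}$ in the two middle cases.

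The engine is a column/row dichotomy that follows from Lemma~\ref{Ti(a)} alone. Fixing $t\in Q$, as $X$ runs over $\cS(\A)$ the sets $X\cap G_px$ partition the coset, so the admissible pieces partition the $p+2$ indices $\{0\}\cup P$; the piece carrying the pure index $0$ is thin or full, and a one-line count shows that a thin piece $\{0,\ell\}$ leaves exactly the $p$ indices $P\setminus\{\ell\}$, which can only be filled by a single cothin piece with the same $\ell$. Hence each column is either one basic set of full type, or splits into exactly one thin piece $\{(0,t),(\ell,t)\}$ and one cothin piece, the two sharing the distinguished subgroup $L_{1,\ell}$; symmetrically for rows. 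On the other hand, for every basic set $X\neq\{e\}$ the sets $\langle X\rangle$ and $\rad(X)$ are $\A$-subgroups, so primitivity forces $\langle X\rangle=G$ and $\rad(X)=\{e\}$. Note already that if a basic set $X$ is thin in each column and row it meets (call such an $X$ \emph{pure-thin}), then it is automatically of the required shape: thinness in a row means that row contains a single mixed cell of $X$, so two thin columns of $X$ cannot share a distinguished row, giving $H_i\cap H_j=\{e\}$ for free, while $\langle X\rangle=G$ forces at least two subgroups.

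Thus the whole problem reduces to producing one pure-thin basic set. The thin pieces define a map sending each split column $t$ to a subgroup $L_{1,\lambda(t)}$ and each split row $s$ to a subgroup $L_{2,\mu(s)}$; a column $t$ with $\mu(\lambda(t))=t$ is a \emph{flag}, for then the thin piece of column $t$ coincides with the thin piece of row $\lambda(t)$ and already contains $(0,t)$, $(\lambda(t),t)$ and $(\lambda(t),0)$, i.e.\ all of $H^\#$ for $H=L_{1,\lambda(t)}\times L_{2,t}$. A pure-thin basic set is precisely one whose columns are all flags grouped into a common basic set, and the attached subgroups then form a partial spread, i.e.\ a translation net.

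The main obstacle is exactly this existence statement, since a priori a basic set may mix thin slices with cothin or full ones. I would resolve it in two stages. First, rule out full columns and rows: a full column $t$ forces $X\supseteq G_p\times L_{2,t}^{\#}$, and I expect this to be excluded by combining $\langle X\rangle=G$ with $\rad(X)=\{e\}$ (computed fibrewise as the intersection of the distinguished subgroups over the partial columns of $X$), reducing matters to the all-split case. Second, in the all-split case one cannot have $\lambda$ and $\mu$ mutually inverse since $|P|\neq|Q|$, so mixed basic sets are unavoidable; the point is instead to isolate a minimal one. Here the sizes separate: a cothin slice contributes at least $p(p-1)(q-1)$ elements, and the consistency relation $a+a'p=b+b'q$ between the thin/cothin column and row counts, together with $\rad(X)=\{e\}$, should pin a basic set of least cardinality down to the pure-thin type, the inequalities being where the hypotheses $p\neq q$ and $p,q$ odd (so $p,q\geqslant3$) enter. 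Making this extremal bookkeeping precise, i.e.\ showing that the column- and row-partitions cannot conspire to avoid a pure-thin basic set, is the genuinely delicate part and is exactly where the translation-net structure of the configuration is exploited.
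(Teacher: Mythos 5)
Your first step is correct and coincides with the paper's own setup: the encoding of basic sets by subsets of $[0,p+1]\times[0,q+1]$, the thin/cothin/full trichotomy for each row and column (which the paper phrases as each row/column having a ``dominant letter''), and the observation that a basic set which is thin in every row and column it meets is precisely one of the required form $H_1^\#\cup\cdots\cup H_m^\#$. The gap is that everything after this reduction---the existence of such a ``pure-thin'' basic set---is the actual content of the theorem, and the plan you sketch for it cannot be completed with the tools you allow yourself. The paper argues by contradiction: if every basic set is dominant (full or cothin) in some row or column, counting arguments reduce the configuration, up to symmetry, to six explicit matrices $M_1,\dots,M_6$. Four of these are indeed killed by exactly the kind of argument you propose (a non-trivial radical of a basic set, or of a basic set union $\{e\}$, contradicting primitivity). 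But two of them, $M_2$ and $M_5$, pass every test in your toolkit: they are consistent with Lemma~\ref{Ti(a)}, every letter has trivial radical and generates $G$, and all cardinality and incidence relations hold. The paper eliminates them only by using the ring multiplication: writing $(\underline{Y})^2=\sum_g c_g\, g$ for the smaller letter $Y$ and comparing the coefficients at an element of order $q$ and an element of order $p$ inside $Y$, which forces $(q-p)(pq-2)=0$ for $M_2$ and $(q-p)pq=0$ for $M_5$, impossible since $p\ne q$. Your proposal never invokes structure constants, only the set-theoretic shape of the basic sets, so it cannot distinguish these configurations from genuine S-rings. In particular, your extremal heuristic (``a basic set of least cardinality should be pure-thin'') fails as a combinatorial deduction: in the configuration $M_2$ the smallest letter is $Y$, which has cothin slices.

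Two further concrete problems. First, your Stage~1 (``a full column is excluded by combining $\langle X\rangle=G$ with $\rad(X)=\{e\}$'') is false as stated: the rank-$2$ S-ring over $G$ is primitive and its unique non-trivial basic set $G^\#$ is full in every row and column while having trivial radical and full span. This is exactly why the hypothesis $\rk(\A)\geqslant 3$ appears in the theorem, and your outline never uses it. Second, even granting rank at least $3$, full slices cannot be excluded by inspecting the offending basic set alone: in the paper's configuration $M_6$ the letter $X$ has full rows, yet its own span and radical give no contradiction; the contradiction there comes from the \emph{other} letter, via $\rad(Y\cup\{e\})\geqslant L_{2,q+1}$. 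So ruling out dominant slices inherently requires playing basic sets against each other and, in the residual cases $M_2$ and $M_5$, computing products of basic sets. This is precisely the part you defer as ``the genuinely delicate part,'' and it is where the proof actually lives.
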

\begin{proof}
We consider the partition of $G^\#$ into the
basic sets of $\A$ distinct from $\{e\}$. Denote the latter basic sets by
$X, Y, Z$, etc. We imagine such partition as a $(p+1) \times(q+1)$ matrix $M$
filled up with the letters $X, Y, Z$, etc. More precisely, let $M_{i,j}=X$ if and only if
$(i,j)\in [1,p+1]\times[1,q+1]$ belongs to the subset of $[0,p+1] \times [0,q+1]$ corresponding to $X$.
Here and in what follows, we use the description of the basic sets of $\A$ established in
Eq.~\eqref{eq:X-rat} with abbreviation $p_1=p$ and $p_2=q$.

Notice that, Lemma~\ref{Ti(a)} implies that
the subsets of $[0,p+1] \times [0,q+1]$
corresponding to the basic sets $X, Y, Z$, etc.~are determined uniquely by $M$ (hence
as well as $\A$). We will freely use symmetries arising by permuting the rows, the columns, the letters, and transposing the matrix.

Suppose that $M_{i,j}=X$ and $M_{i,j^\prime} \ne X$ if $j^\prime\ne j$.
Let $T \subseteq [0,p+1] \times [0,q+1]$ be the subset corresponding to $X$.
Then $T_2(i)=\{0,j\}$ by Lemma~\ref{Ti(a)}. In particular, $(i,0) \in T$, and
so $X_{1,i}X_{2,0}=L_{1,i}^\# \subseteq X$. This shows that $X$ is the only letter in the $i^{\text{th}}$ row occurring exactly once. Applying Lemma~\ref{Ti(a)} again, one concludes that each row and each column is filled up with either a single letter or with the same letter with one exception. We will call the letter that occurs there with at most one exception the \emph{dominant letter} of the row or column.

Assume to the contrary to the claim in the theorem that no basic set of $\A$ is the union of pairwise disjoint
subgroups of order $pq$ without the identity element.
This means that every letter is dominant in at least one row or column.
As $\rk(\A) \geqslant 3$, there are at least two letters. Moreover, primitivity implies that every letter occurs in at least two rows and in at least two columns.

There cannot be three different dominant letters of rows, since in that case there would be a column containing three different letters. As $p,\, q > 2$, we may assume
w.l.o.g.~that the number of rows is at least $6$. Thus, there are $3$ rows with the same dominant letter, say, $X$, hence we see that with at most one exception the dominant letter of the
columns is $X$. Now, since the number of columns is at least $4$, we also conclude that with at most one exception the dominant letter of the rows is $X$.

We show next that there cannot be three different letters. W.l.o.g.~the first $p$ rows are dominated by $X$, the last row by $Y$,
the first $q$ columns by $X$, and the last column by $Z$.
The letter $Y$ should appear somewhere in the first $p$ rows (as it was pointed out,
this follows from primitivity). Since no column is dominated by $Y$, this is the only $Y$ in its column. Hence up to permuting the
rows and columns, the matrix $M$ must look like this:
$$
M={\footnotesize
\begin{pmatrix}
X&X&\dots&X&X&Y \\
X&X&\dots&X&X&Z \\
\vdots&\vdots&\ddots&\vdots&\vdots&\vdots \\
X&X&\dots&X&X&Z \\
Y&Y&\dots&Y&Y&Z
\end{pmatrix}}.
$$
Then $Z$ occurs only in the last column, but primitivity does not allow this. So there are exactly two letters, $X$ and $Y$.
\medskip

\noindent{\bf Case~1.}
Both the last row and column are dominated by $Y$ and all others by $X$.
\medskip

\noindent{\bf Subcase 1A.}  If the entry in the lower right corner is $X$,
then the matrix is uniquely determined:
$$
M=M_1:=
{\footnotesize
\begin{pmatrix}
X&X&\dots&X&X&Y \\
X&X&\dots&X&X&Y \\
\vdots&\vdots&\ddots&\vdots&\vdots&\vdots \\
X&X&\dots&X&X&Y \\
Y&Y&\dots&Y&Y&X
\end{pmatrix}}.
$$

\noindent{\bf Subcase 1B.} If the entry in the lower right corner is $Y$, then up to permuting rows and columns the matrix looks like this:
$$
M={\footnotesize
\begin{pmatrix}
?&X&\dots&X&X&? \\
X&X&\dots&X&X&Y \\
\vdots&\vdots&\ddots&\vdots&\vdots&\vdots \\
X&X&\dots&X&X&Y \\
?&Y&\dots&Y&Y&Y
\end{pmatrix}}
$$
where each question mark should be replaced by
$X$ or $Y$ in such a way that in the first row, as well as in the first column, there can be at most one $Y$.
That allows the following possibilities for the corner elements of the matrix (up to transposition):
$$
M_2:=
{\footnotesize
\begin{pmatrix} X&\dots &X \\ \vdots&&\vdots \\ X&\dots &Y \end{pmatrix}},~
M_3:=
{\footnotesize
\begin{pmatrix} X&\dots &Y \\ \vdots&&\vdots \\ X&\dots &Y \end{pmatrix}},~
M_4:=
{\footnotesize
\begin{pmatrix} X&\dots &Y \\ \vdots&&\vdots \\ Y&\dots &Y \end{pmatrix}},~
M_5:=
{\footnotesize
\begin{pmatrix} Y&\dots &X \\ \vdots&&\vdots \\ X&\dots &Y \end{pmatrix}}.
$$

\noindent{\bf Case 2.} The last row is dominated by $Y$ and all other rows as well as  every column is dominated by $X$.
\medskip

By primitivity, $Y$ must occur somewhere in the first $p$ rows.
No two $Y$'s can be in the same column, because each one is dominated by $X$. Hence up to permutations, we have a unique possibility:
$$
M=M_6:=
{\footnotesize
\begin{pmatrix}
X&X&\dots&X&X&Y \\
X&X&\dots&X&X&X \\
\vdots&\vdots&\ddots&\vdots&\vdots&\vdots \\
X&X&\dots&X&X&X \\
Y&Y&\dots&Y&Y&X
\end{pmatrix}}.
$$

What is left to show is that none of the matrices $M_1,\ldots,M_6$ defines a primitive
S-ring (of rank $3$).
Let $P$ and $Q$ be the Sylow $p$- and $q$-subgroups of $G$, respectively.

We start with the matrix $M_1$.
Let $T \subset [0,p+1] \times [0,q+1]$ be the subset corresponding to $Y$.
Using Lemma~\ref{Ti(a)}, we find
$$
T = \big\{ (i,q+1), (i,0) :  1 \leqslant i \leqslant p \big\}~\cup~
\big\{ (p+1,j), (0,j) : 1 \leqslant j \leqslant q \big\}.
$$
Then due to Eq.~\eqref{eq:X-rat}, 
$Y=L_{1,p+1}(Q \setminus L_{2,q+1})~\cup~(P \setminus L_{1,p+1})L_{2,q+1}$.
It can be seen easily that $Yx=Y$ for every $x \in L_{1,p+1}L_{2,q+1}$.
Thus $\rad(Y)$ is a non-trivial $\A$-subgroup, contradicting that $\A$ is primitive.

Similarly, if $M=M_3, M_4$, or $M_6$ we can find a non-trivial $\A$-subgroup contradicting the primitivity of $\A$.
Namely, if $M=M_3$, then $Y=L_{1,p+1}(Q\setminus L_{2,1})~\cup~(P\setminus L_{1,p+1})L_{2,q+1}$, hence
$\rad(Y)\geqslant L_{1,p+1}$. If $M=M_4$, then 
$X=(P \setminus L_{1,p+1})(Q \setminus L_{2,q+1})$, so $\rad(X) \ge L_{1,p+1}L_{2,q+1}$. 
If $M=M_6$, then $Y=Q^\#~\cup~L_{1,1}^\#L_{2,q+1}~\cup~L_{1,p+1}^\#(Q\setminus L_{2,q+1})$,
hence $\rad(Y \cup \{e\})\geqslant L_{2,q+1}$.

Let $M=M_2$. Then one obtains $Y=A \cup B \cup C$, where
\begin{equation}\label{eq:Ynew}
A=(P \setminus (L_{1,1} \cup L_{1,p+1}))L_{2,q+1},~
B=L_{1,p+1}(Q \setminus (L_{2,1} \cup L_{2,q+1}))~\text{and}~
C=L_{1,p+1}^\#L_{2,q+1}^\#.
\end{equation}
Write $(\underline{Y})^2= \sum_{g \in G} c_g  g$.
A straightforward computation shows that $c_h=p(q-2)^2+2(p-1)(q-2)$
for $h \in Q \setminus (L_{2,1} \cup L_{2,q+1})$, and
$c_{h^\prime}=(p-2)^2q+2(p-2)(q-1)$
for $h \in P \setminus (L_{1,1} \cup L_{1,p+1})$.
Since $Y$ is a basic set of $\A$ and
$h,\, h^\prime \in Y$, $c_h=c_{h^\prime}$ must hold. This gives that
$(q-p)(pq-2)=0$, a contradiction.

Finally, let $M=M_5$. Then
$Y=A \cup B \cup C \cup (L_{1,1}L_{2,1})^\#$, where $A,\, B$ and $C$ are defined 
in \eqref{eq:Ynew}.
In this case write $(\underline{Y})^2= \sum_{g \in G} \tilde{c}_g  g$.
Comparing the coefficients with the previous case ($M=M_2$) we see that
$\tilde{c}_h=c_h+2(q-2)$
for $h \in Q \setminus (L_{2,1} \cup L_{2,q+1})$, and
$\tilde{c}_{h^\prime}=c_{h^\prime}+2(p-2)$
for $h \in P \setminus (L_{1,1} \cup L_{1,p+1})$.
As $\tilde{c}_h=\tilde{c}_{h^\prime}$ must hold, we obtain $(q-p)pq=0$,
a contradiction again.
\end{proof}

The particular case when $|G|=36$ is considered separately and the proposition below
follows from the database of S-rings over groups of small order due to
Reichard~\cite{R}.

\begin{prop}\label{36}
Suppose that $G \cong C_2^2 \times C_3^2$ and
$\A$ is a primitive rational schurian S-ring over $G$ of rank at least $3$.
Then $\A$ has a basic set in the form
$H_1^\# \cup H_2^\#$,
where $H_1$ and $H_2$ are subgroups of $G$ of order $6$ and
$H_1 \cap H_2=\{e\}$.
\end{prop}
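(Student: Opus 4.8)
The plan is to run the same matrix bookkeeping as in the proof of Theorem~\ref{primrat}, specialised to $p=2$, $q=3$, and to pinpoint exactly where the odd-prime argument breaks down. First I would check that the two structural inputs survive the prime $2$. In Lemma~\ref{trivial} the count $|X\cap G_px|=f+m(p-1)$ with $f\in\{0,1\}$ becomes $f+m$ for $p=2$, and the divisibility $2\mid|X\cap G_px|$ forces $(f,m)\in\{(0,0),(1,1),(0,2),(1,3)\}$; these four options give respectively $\emptyset$, a coset $Rx$, a set $(G_p\setminus R)x$ (two of the three involutions), and $G_px$, so Lemma~\ref{trivial}, and with it Lemma~\ref{Ti(a)}, holds verbatim at $p=2$. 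Consequently a primitive rational S-ring $\A$ over $G\cong C_2^2\times C_3^2$ is again encoded by a $3\times 4$ matrix $M$ in which every row and every column is \emph{dominant}, i.e.\ constant or constant with a single exception.

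Next I would reproduce the two dimension-free reductions. If some letter fails to be dominant in every row and every column, then by Lemma~\ref{Ti(a)} it occurs at most once per row and at most once per column; writing its positions as $(i_1,j_1),\dots,(i_m,j_m)$ with the $i_t$ pairwise distinct and the $j_t$ pairwise distinct, the corresponding basic set equals $\bigcup_{t}(L_{1,i_t}\times L_{2,j_t})^\#=\bigcup_{t}H_t^\#$, a union of order-$6$ subgroups with pairwise trivial intersection, which is exactly the asserted form. So I may assume every letter is dominant somewhere. The column-dominance of the length-three columns then forbids three distinct dominant row-letters (a column meeting none of the at most three exception cells would display three distinct entries), and the analogous count applied to the transpose bounds the dominant column-letters as well, so that only a handful of letters can occur.

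At this point the odd-prime proof collapses the picture to exactly two letters by a pigeonhole step requiring three rows with a common dominant letter; with only $p+1=3$ rows this step is unavailable, and this is precisely what makes $|G|=36$ genuinely exceptional. The remaining task is therefore finite: enumerate the dominant-pattern $3\times 4$ matrices, discard those that are imprimitive (by exhibiting a nontrivial radical or generated $\A$-subgroup, exactly as in the $M_1,\dots,M_6$ analysis of Theorem~\ref{primrat}) or of rank $\le 2$, and confirm that each survivor carries a basic set $H_1^\#\cup H_2^\#$ of the required shape. The main obstacle is not the enumeration itself but certifying which patterns actually underlie a \emph{schurian} S-ring, a condition that is awkward to verify by hand; for this I would appeal to Reichard's exhaustive database~\cite{R} of S-rings over groups of order $36$, from which one reads off all primitive rational schurian S-rings over $G$ of rank at least $3$ and checks directly that each contains a basic set of the form $H_1^\#\cup H_2^\#$ with $H_1\cap H_2=\{e\}$ (in particular with $m=2$).
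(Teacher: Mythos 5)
Your proof is correct and, in its load-bearing step, identical to the paper's: the paper establishes Proposition~\ref{36} purely by appeal to Reichard's database~\cite{R} of S-rings over groups of order $36$, which is exactly where your argument ends up. The preliminary $3\times 4$ matrix analysis is superfluous scaffolding and not fully correct — the transposed pigeonhole does \emph{not} bound the number of column-dominant letters by two, since three columns with distinct dominant letters carry at most three exception cells spread over exactly three rows, so no exception-free row is guaranteed (one can write down a dominant-pattern $3\times 4$ matrix with three distinct column-dominant letters), and in the non-dominant-letter case the basic set may be a union of \emph{three} order-$6$ subgroups rather than two — but since your final database check covers all primitive rational schurian S-rings of rank at least $3$ independently of that analysis, the conclusion stands.
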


The subgroups $H_i$'s above and as well as in Theorem~\ref{primrat}
can be used to define a translation net with
translation group $G\r$ (see \cite[Definition~1]{BJ}) and this connection will be explored
in the proof of Theorem~\ref{main2}.
The rest of the section is devoted to translation nets.
\medskip

An \emph{$(n,k)$-net} $\N=(\Omega,\L)$ consists of
a set $\Omega$ of $n^2$ points and a set $\L$ of $kn$ lines
such that  
\begin{enumerate}[{\rm (1)}]
\item each line $L \in \L$ contains $n$ points,
\item $\L$ is partitioned into $k$ \emph{parallel classes}:
$\L_1,\ldots,\L_k$,
\item any two lines from distinct parallel classes intersect at
exactly one point.
\end{enumerate}
The \emph{collinearity graph} $\Gamma_\N$ has vertex set
$\Omega$, and two points $\alpha$ and $\beta$ are adjacent
if and only if there is a line $L \in \L$ passing through these points.
$\Gamma_\N$ is a strongly regular graph with
parameters $(n^2, k(n-1), n-2+(k-1)(k-2),k(k-1))$ and
non-principal eigenvalues $n-k$ and $-k$.

Following~\cite{BJ}, a \emph{weak automorphism} of $\N$ is a permutation of $\Omega$, which preserves the line set $\L$.
By a \emph{strong automorphism} we mean a weak automorphism
when, in addition, it also preserves each parallel class $\L_i$.
If $\N$ admits a group $H$ of strong automorphisms acting regularly
on $\Omega$, then it is called a \emph{translation net}
with \emph{translation group} $H$.

One way to construct an $(n,k)$-net is the following.
Let $H$ be a group of order $n^2$. A
\emph{partial congruence partition} of $H$ with degree $k$
(an \emph{$(n,k)$-PCP} for short) is a family of $k$ subgroups
$H_1,\ldots,H_k$ of order $n$ such that $H_i \cap H_j=\{e\}$ whenever
$i \neq j$. Letting $\Omega=H$ and $\L$ to be the set of
all right cosets $H_i x$, $1 \leqslant i \leqslant k$ and $x \in H$,
the pair $(\Omega,\L)$ becomes an $(n,k)$-net whose $i$-th
parallel class $\L_i$ consists of the cosets $H_i x,\, x\in  H$.
Note that, the collinearity graph is $\Gamma_{(\Omega,\L)}=\cay(H,D)$, where $D=\bigcup_{i=1}^k H_i^\#$. Furthermore, $(\Omega,\L)$ is a translation net
with translation group $H\r$.

\begin{prop}\label{net1}
Let $\N$ be an $(n,k)$-net such that $n > (k-1)^2$.
Then the size of a clique of the collinearity graph $\Gamma_\N$
is bounded by $n$, and the lines of $\L$ are the only
$n$-cliques of $\Gamma_\N$.
\end{prop}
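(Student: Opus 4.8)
The plan is to reduce both assertions to a single combinatorial lemma: \emph{a clique of $\Gamma_\N$ that is not contained in one line has at most $(k-1)^2$ vertices}. Since every line is visibly a clique of size $n$ (any two of its points are collinear) and $n>(k-1)^2$, this lemma immediately yields the proposition. Indeed, a clique $C$ lying inside a line has $|C|\le n$, while one that does not has $|C|\le (k-1)^2<n$; hence $\omega(\Gamma_\N)\le n$. Moreover, if $|C|=n$ then the second alternative is excluded, so $C$ is contained in some line $\ell$, and $|C|=n=|\ell|$ forces $C=\ell$.

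To prove the lemma I would use two elementary facts about nets: two distinct points lie on at most one common line (two lines sharing two points would have to coincide), and through each point pass exactly $k$ lines, one in each parallel class. First I would bound the largest intersection $a:=\max_{\ell}|\ell\cap C|$ of $C$ with a line. Fix a line $\ell^{*}$ attaining $a$, say in class $\L_c$, and, using that $C\not\subseteq\ell^{*}$, pick a point $w\in C\setminus\ell^{*}$. For each $u\in \ell^{*}\cap C$ the unique line $m_u$ through $w$ and $u$ is well defined; it differs from $\ell^{*}$ (as $w\notin\ell^{*}$) and does not belong to $\L_c$ (otherwise $m_u$ and $\ell^{*}$ would be two lines of $\L_c$ through $u$, hence equal). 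Distinct $u,u'\in\ell^{*}\cap C$ give distinct lines $m_u\neq m_{u'}$, since $m_u=m_{u'}$ would place the two points $u,u'$ on both $m_u$ and $\ell^{*}$ and thus force $m_u=\ell^{*}$. Therefore $\{m_u : u\in \ell^{*}\cap C\}$ consists of $a$ distinct lines through $w$ avoiding the class $\L_c$; as only $k-1$ such lines exist, $a\le k-1$.

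The second ingredient is a pigeonhole count. Fixing any $x\in C$, every other point of $C$ lies on one of the $k$ lines through $x$, and each of these lines meets $C$ in at most $a\le k-1$ points, hence in at most $k-2$ points of $C\setminus\{x\}$. Consequently $|C|-1\le k(k-2)$, giving $|C|\le (k-1)^2$ and proving the lemma. I expect the main obstacle to be the bound $a\le k-1$: this is the step where the full strength of the net axioms (uniqueness of the line through two points and the one-line-per-class property) is required, and where one must verify carefully that the lines $m_u$ are genuinely distinct and all avoid the class of $\ell^{*}$. I would also note that the size bound $\omega(\Gamma_\N)\le n$ can alternatively be derived from the Hoffman clique bound, since the least eigenvalue of $\Gamma_\N$ is $-k$ and its valency is $k(n-1)$, whence $1-k(n-1)/(-k)=n$; nevertheless the combinatorial route is preferable here, as it simultaneously identifies the extremal $n$-cliques as exactly the lines.
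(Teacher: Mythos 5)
Your proof is correct and takes essentially the same approach as the paper's: the identical key lemma (a clique not contained in any line has at most $(k-1)^2$ points), established the same way — first bounding the maximal intersection of the clique with a line by $k-1$ via the distinct lines joining an outside clique point to that intersection, then a pigeonhole count over the $k$ lines through a clique point. The only cosmetic differences are that you count the $k-1$ lines through $w$ avoiding the class of $\ell^{*}$ where the paper counts the $m$ joining lines together with the parallel line (so $m+1\leqslant k$), and that your final count is based at an arbitrary clique point rather than the chosen outside point.
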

\begin{proof}
Let $\Delta$ be a clique not contained in any line. We will show that
$|\Delta|\leqslant (k-1)^2$. Choose a line $L$ such that $m=|\Delta\cap L|$ is
as large as possible. As $\Delta\not\subseteq L$, we can take a point $\delta\in\Delta\setminus L$.
Let $L_0$ be the line through $\delta$ parallel to $L$, and let $L_1,\dots,L_m$ be the lines
connecting $\delta$ to the points in $\Delta\cap L$. Then $L_0,L_1,\dots,L_m$ are pairwise
distinct, so $m+1\leqslant k$. Each point in $\Delta$ is connected to $\delta$, hence we obtain
$|\Delta|\leqslant 1+k(m-1)\leqslant (k-1)^2$.
\end{proof}

\begin{prop}\label{net2}
Let $\N=(\Omega,\L)$ be an $(n,k)$-net such that $k< n$ and
$H$ be an abelian group of weak automorphisms of $\N$,
which is regular on $\Omega$. Then every element in $H$ is
a strong automorphism.
\end{prop}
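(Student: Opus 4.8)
The plan is to prove that the homomorphism recording the action of $H$ on parallel classes is trivial. Since a weak automorphism preserves incidence, it maps parallel (i.e.\ disjoint or equal) lines to parallel lines, so it permutes the parallel classes; this yields a homomorphism $\phi\colon H\to\sym(\{\L_1,\dots,\L_k\})$ whose kernel consists exactly of the strong automorphisms in $H$. Writing $M=\phi(H)$, the goal is $M=1$. I would identify $\Omega$ with $H$ via a base point $e$, so that $H$ acts by right translation $x\mapsto xg$; then $|H|=n^2$ and each $g\in H$ sends a line $L$ to its translate $Lg$. For a class $i$ let $E_i$ be the line of $\L_i$ through $e$, and let $\ell_j(x)$ be the (unique) line of $\L_j$ through $x$. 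The basic formula I will use is $\ell_j(x)=E_{\phi(x)^{-1}(j)}\,x$, which follows because $E_i x$ is the image of $E_i$ under translation by $x$, hence the line of class $\phi(x)(i)$ through $x$.

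Suppose for contradiction that $M\neq1$, and choose $g$ with $\phi(g)(1)=2\neq1$. For every $L\in\L_1$ the translate $Lg$ lies in $\L_2$, and two lines of distinct classes meet in exactly one point, so summing $|L\cap Lg|=1$ over the $n$ lines of $\L_1$ shows that exactly $n$ points $x$ satisfy $\ell_1(x)=\ell_1(xg^{-1})$; call these the \emph{good} points. The heart of the proof is to count them a second way. Substituting the basic formula, using $\phi(xg^{-1})^{-1}=\phi(g)\phi(x)^{-1}$, and — crucially — the commutativity identity $xgx^{-1}=g$, the good condition collapses to a statement independent of $x$ except through the coset $\phi(x)=m$, namely
\[
E_{a}\,g=E_{\phi(g)(a)},\qquad a=\phi(x)^{-1}(1).
\]
Thus the set of good points is a union of $K$-cosets, where $K=\ker\phi$ has $|K|=n^2/|M|$.

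Both sides of the displayed identity are lines of class $\phi(g)(a)$ (and $E_a g\ni g$), so they coincide iff $g\in E_{\phi(g)(a)}$. Since $g$ lies on at most one line through $e$, there is at most one class $b^{\ast}$ with $g\in E_{b^{\ast}}$; if none exists, or if $b^{\ast}$ is not in the $M$-orbit $O_1$ of class $1$, then there are no good points, contradicting that there are $n>0$. Otherwise the good cosets are exactly the $m\in M$ with $m^{-1}(1)=\phi(g)^{-1}(b^{\ast})\in O_1$, whose number is $|M|/s_1$ by orbit--stabiliser, with $s_1=|O_1|$. Multiplying by the coset size $|K|$ gives $n^2/s_1$ good points; equating with $n$ yields $s_1=n$. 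But $O_1$ is an orbit on the $k$ classes, so $s_1\leq k<n$, a contradiction. Hence $M=1$ and every element of $H$ is strong.

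The main obstacle, and the step I would check most carefully, is the reduction of the good condition to one depending only on $\phi(x)$: this is precisely where commutativity of $H$ is used (for a non-abelian $H$ one is left with an $x$-dependent conjugate $xgx^{-1}$, and the coset-counting breaks), and it must be combined correctly with the fact that $g$ meets at most one line through $e$. Once that reduction is secured, the orbit--stabiliser computation and the inequality $k<n$ finish the argument at once.
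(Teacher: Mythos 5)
Your proof is correct, but it takes a genuinely different route from the paper's. You work with the permutation representation $\phi\colon H\to\sym(\{\L_1,\dots,\L_k\})$ and run a double count of the ``good'' points $x$ with $\ell_1(x)=\ell_1(xg^{-1})$: once via the incidence axiom (giving exactly $n$ such points), and once via the collapse of the good condition to $E_a g=E_{\phi(g)(a)}$, which depends on $x$ only through $\phi(x)$ -- this collapse is exactly where commutativity ($xgx^{-1}=g$) enters, and the hypothesis $k<n$ appears only at the very end, since an $M$-orbit on classes has size at most $k$ while the count forces size $n$. The paper instead argues directly on the $H$-orbit $\O$ of a single line $L$: from $|\O|\leqslant|\L|=nk<n^2=|H|$ the setwise stabiliser $H_{\{L\}}$ is non-trivial; commutativity makes this stabiliser constant along the orbit, and semiregularity of $H$ on $\Omega$ forces $|L\cap L^h|$ to be divisible by $|H_{\{L\}}|$, so distinct lines in $\O$ are disjoint; orbit--stabiliser then pins down $|H_{\{L\}}|=|\O|=n$, so $\O$ is precisely a parallel class and every element of $H$ is strong. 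The paper's argument is shorter and coordinate-free, using $k<n$ up front to manufacture the non-trivial stabiliser; yours is more computational but isolates cleanly where each hypothesis (abelianness, regularity, $k<n$, the intersection axiom) is used, at the cost of a case analysis over the position of $g$ relative to the lines through the base point. Both are complete and correct.
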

\begin{proof} Note, first, that $|H|=|\Omega|=n^2$.
Let $L \in \L$ and $\O=\{L^h: h \in H\}$ be the
orbit of $L$ under $H$ in its action on $\L$.
It follows from  $|\O| \leq |\L|=nk < n^2 = |H|$ that the setwise stabilizer
$H_{\{L\}}=\{h\in H : L^h = L\}$ is non-trivial.
Since $H$ is abelian, $H_{\{L\}}=H_{\{L^h\}}$ for every $h \in H$.
In particular, the intersection $L\cap L^h$ is mapped to itself by
$H_{\{L\}}$. Using also that $H$ is semiregular on $\Omega$, this
implies that $|L\cap L^h|$ is divisible by $|H_{\{L\}}|$.
If $L \ne L^h$, then $|L\cap L^h| \in \{0,1\}$, and we conclude that
the lines in $\O$ are pairwise disjoint. Therefore,
$|H/H_{\{L\}}|=|\O| \leqslant n$, implying that  $|H_{\{L\}}| \geq n$.
On the other hand, $|H_{\{L\}}| \leqslant |L|=n$. Therefore,
$|H_{\{L\}}|=n=|\O|$ and, consequently, $\O$ is a parallel class of $\L$.
\end{proof}

We conclude the section with a sufficient condition for the CI-property of
an S-ring over $C_p^2 \times C_q^2$.

\begin{lem}\label{PCP}
Let $G$ be an abelian group of order $p^2q^2$ for primes $p<q$,
$A\in \Sup^{\min}(G\r)$ and $\A=V(G,A_e)$.
Suppose that there exists an $\A$-set of the form
$$
H_1^\# \cup \cdots \cup H_k^\#,
$$
where $H_1,\ldots,H_k$ are subgroups of $G$
and form a $(pq,k)$-PCP of $G$.
Then $\underline{H_i} \in \A$ for each $1 \leqslant i \leqslant k$.
If $k > 1$, then $\A$ is CI.
\end{lem}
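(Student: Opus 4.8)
The plan is to prove the two assertions separately: the first via the translation net attached to the PCP, and the second by reducing to a tensor product of S-rings over cyclic groups of square-free order. If $k=1$, the first claim is immediate, since $\langle H_1^\# \rangle = H_1$ and $\langle X \rangle$ is an $\A$-subgroup for every $\A$-set $X$, whence $\underline{H_1} \in \A$. So I would assume $k \geq 2$ from now on. Writing $P,\, Q$ for the Sylow $p$- and $q$-subgroups of $G$, the intersections $H_i \cap P$ are subgroups of order $p$, and they are pairwise distinct because $(H_i \cap P) \cap (H_j \cap P) \leqslant H_i \cap H_j = \{e\}$ for $i \neq j$. This forces $P \cong C_p^2$, and symmetrically $Q \cong C_q^2$, so $G \cong C_p^2 \times C_q^2$. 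Counting the subgroups of order $p$ in $C_p^2$ gives $k \leqslant p+1$, hence $(k-1)^2 \leqslant p^2 < pq$ and $k \leqslant p+1 \leqslant q < pq$. Thus, with $n = pq$, both net inequalities $n > (k-1)^2$ and $k < n$ hold.

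For the first claim with $k \geq 2$, I would regard the PCP as a translation net $\N$ with translation group $G\r$, whose collinearity graph is exactly $\cay(G,D)$ with $D = \bigcup_{i=1}^k H_i^\#$. Since $A \leqslant \aut(\cay(G,D))$ and $n > (k-1)^2$, Proposition~\ref{net1} tells us that the lines of $\N$ are precisely its $n$-cliques, so every element of $A$ is a weak automorphism of $\N$. Now let $F \leqslant A$ be any regular subgroup isomorphic to $G$; it is abelian and regular on $\Omega = G$, so by Proposition~\ref{net2} (using $k < n$) every element of $F$ is a \emph{strong} automorphism, i.e.\ it fixes each parallel class $\L_i$ setwise. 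Because $A \in \Sup^{\min}(G\r)$ is generated by such subgroups $F$, and the strong automorphisms form a subgroup of the weak automorphism group, all of $A$ preserves every $\L_i$. Consequently $A_e$ fixes the unique line of $\L_i$ through $e$, which is $H_i$; hence $H_i^\#$ is a union of $A_e$-orbits, that is, a union of basic sets, and therefore $\underline{H_i} \in \A$ for every $i$.

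For the second claim, assume $k > 1$ and fix two of the subgroups, say $H_1$ and $H_2$. From $H_1 \cap H_2 = \{e\}$ and $|H_1|\,|H_2| = |G|$ I get $G = H_1 \times H_2$, an internal direct product of two $\A$-subgroups. Lemma~\ref{ci-complementary} then yields $\A = \A_{H_1} \otimes \A_{H_2}$. Each $H_i$ has order $pq$ with $p \neq q$, so $H_i \cong C_{pq}$ is cyclic of square-free order and hence a DCI-group by Theorem~\ref{main1}; as $\A_{H_i}$ is schurian (it is the restriction of $\A=V(G,A_e)$ to the $\A$-section $H_i$), it is therefore CI. Corollary~\ref{ci-tensor} (or a second application of Lemma~\ref{ci-complementary}) now gives that $\A$ is CI.

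I expect the crux to be the middle paragraph: the passage from ``$G\r$ consists of strong automorphisms'' (which is automatic) to ``\emph{all} of $A$ consists of strong automorphisms.'' This is exactly where the hypothesis $A \in \Sup^{\min}(G\r)$ is indispensable, since it lets me propagate the conclusion of Proposition~\ref{net2} from each regular copy of $G$ to the whole group $A$. The only other thing to watch is that the numerical inequalities feeding Propositions~\ref{net1} and~\ref{net2} genuinely follow from $k \leqslant p+1$ and $p < q$, which I verified in the first paragraph.
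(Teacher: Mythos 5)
Your proof is correct and is essentially the paper's own argument: the same translation-net reasoning (Propositions~\ref{net1} and~\ref{net2}), combined with the fact that $\preceq_{G}$-minimality forces $A$ to be generated by its regular subgroups isomorphic to $G$, gives $\underline{H_i}\in\A$, and the case $k>1$ is settled by the same appeal to Lemma~\ref{ci-complementary} via $G=H_1\times H_2$. One small caveat on your last step: to see that the factors $\A_{H_i}$ are CI you should invoke not the DCI statement of Theorem~\ref{main1} itself (being a DCI-group does not formally imply that every schurian S-ring over the group is CI, since a $2$-closed overgroup of $H\r$ need not be the automorphism group of a single Cayley digraph), but the stronger fact its proof establishes --- namely that $V(H,B_e)$ is CI for every $B\in\Sup^{\min}(H\r)$ when $H\cong C_{pq}$, which upgrades to all schurian S-rings over $H$ via the proposition quoted from \cite[Proposition~2.4]{SM} --- a step the paper's own proof of this lemma likewise leaves implicit.
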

\begin{proof}
Denote by $\N$ the induced translation net, i.e., the point set is $G$ and the lines are the cosets $H_ix$, $1 \leqslant i \leqslant  k$ and $x \in G$.
The collinearity graph is $\Gamma_\N = \cay(G,X)$, where
$X=\bigcup_{i=1}^{k}H_i^\#$.
Let $\gamma \in \aut(\A)$. Then $\gamma \in \aut(\Gamma_\N)$.
Now $k \leqslant p+1$, and we have
$(k-1)^2 \leqslant p^2 < pq$.
By Lemma~\ref{net1}, the lines of $\N$ are the only $n$-cliques of
$\Gamma_\N$. Since $\gamma \in \aut(\Gamma_\N)$, it follows that $\gamma$
maps an $n$-clique to an $n$-clique, and we conclude that
$\gamma$ is a weak automorphism of $\N$.

Let $F$ be a regular and abelian subgroup of $A$.
By Lemma~\ref{net2}, $F$ is a group of
strong automorphisms of $\N$, or equivalently,
the partition of $G$ into its $H_i$-cosets is $F$-invariant.
Thus $\underline{H_i} \in \A$ follows from the $\preceq_{G}$-minimality of $A$.
If $k >1$, then Lemma~\ref{ci-complementary} shows that $\A$ is CI.
\end{proof}
%-------------------------------------------------------------------------------------------------------------%
\section{Proof of Theorem~\ref{main2}}\label{sec:proof2}
For this section we fix the following notation:
\begin{quote}
$G = P\times Q$, where $P \cong C_p^2$, $Q \cong C_q^2$ for distinct primes $p$ and $q$,
$A \in \Sup^{\min}(G\r)$ and $\A=V(G,A_e)$.
\end{quote}

Again, our goal is to show that $\A$ is CI (see Corollary~\ref{ci}). In the proof we
shall use the following two lemmas.

\begin{lem}\label{X}
Suppose that $\underline{P} \notin \A$ and let $x \in P^\#$ be such that $\underline{\langle x \rangle}
\notin \A$.
Then for every basic set $X \in \cS(\A)$
\begin{enumerate}[{\rm (i)}]
\item $q$ divides $|X \cap Qx|$.
\item If $X$ contains an element of order $p$, then
$X \cap Qx$ can be one of the following: $\emptyset$, $Rx$ or $(Q \setminus R)x$ for some subgroup
$R \leqslant Q$ of order $q$, or $Qx$.
\end{enumerate}
\end{lem}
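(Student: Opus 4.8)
The plan is to treat the two parts in order, feeding part~(i) into part~(ii); throughout, the power-map operations of Proposition~\ref{SW} will play the role that primitivity played in Lemma~\ref{trivial}. Two facts are used repeatedly: $G[q]=Q$ and the map $g\mapsto g^q$ sends $G$ onto $P$ (the $Q$-component dies and $\gcd(q,p)=1$); and every element of the coset $Qx$ has the same $q$-th power, namely $x^q$, which generates $\langle x\rangle$. For part~(i) I would argue by contradiction: fix a basic set $X$ and suppose $q\nmid|X\cap Qx|$. Then, in the notation of Proposition~\ref{SW}(ii), $x^q\in X^{[q]}$, and $X^{[q]}$ is an $\A$-set contained in $P$. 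Hence the basic set $Y_0$ containing $x^q$ lies in $P$, and $\langle Y_0\rangle$ is an $\A$-subgroup with $\langle x\rangle=\langle x^q\rangle\leqslant\langle Y_0\rangle\leqslant P$. Since $P\cong C_p^2$, the only subgroups in this range are $\langle x\rangle$ and $P$; but $\langle Y_0\rangle=\langle x\rangle$ contradicts $\underline{\langle x\rangle}\notin\A$, while $\langle Y_0\rangle=P$ contradicts $\underline{P}\notin\A$. This is exactly where both hypotheses are consumed, and it forces $q\mid|X\cap Qx|$.

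For part~(ii), the key step—and the one I expect to be the main obstacle—is to promote the single order-$p$ element of $X$ to full $\P_q(G)$-invariance of $X$. Recall that $\P_q(G)$ consists of the power automorphisms $\pi_m$ with $m\equiv 1\pmod p$, which fix $P$ pointwise and act on $Q$ as the full scalar group. If $w\in X$ has order $p$, then $w\in P$, so $w^{\pi_m}=w$; since $X^{\pi_m}=X^{(m)}$ is again a basic set by Proposition~\ref{SW}(i) and shares the point $w$ with $X$, the two basic sets coincide. Thus $X$, and therefore $X\cap Qx$ (because $x\in P$ and $Q$ are $\P_q(G)$-stable), is $\P_q(G)$-invariant.

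It then remains to read off $X\cap Qx$ from the $\P_q(G)$-orbits on the coset $Qx$. Under the identification $cx\leftrightarrow c$ of $Qx$ with $Q\cong\mathbb{F}_q^2$, these orbits are the fixed point $\{x\}$ together with the punctured order-$q$ subgroups $R^\#x$, so $X\cap Qx=(X\cap\{x\})\cup\bigcup_i R_i^\#x$ and $|X\cap Qx|=f+m(q-1)$, where $f=|X\cap\{x\}|\in\{0,1\}$ and $m$ is the number of subgroups used, $0\leqslant m\leqslant q+1$. The divisibility $q\mid|X\cap Qx|$ from part~(i) gives $f\equiv m\pmod q$, whose only solutions are $(f,m)\in\{(0,0),(1,1),(0,q),(1,q+1)\}$; using that the $q+1$ order-$q$ subgroups partition $Q^\#$, these yield respectively $\emptyset$, $Rx$, $(Q\setminus R)x$ and $Qx$, as claimed. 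Once $\P_q(G)$-invariance is secured, the remaining computation is the same elementary count as in Lemma~\ref{trivial}.
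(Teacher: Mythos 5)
Your proof is correct and follows essentially the same route as the paper: part (i) via the $\A$-set $X^{[q]}$, which forces an $\A$-subgroup squeezed between $\langle x \rangle$ and $P$, and part (ii) by upgrading a single order-$p$ element of $X$ to full $\P_q(G)$-invariance and then repeating the orbit count from the proof of Lemma~\ref{trivial}, with part (i) supplying the divisibility that primitivity supplied there. The only (immaterial) difference is that you pass to the basic set containing $x^q$ rather than working with $\langle X^{[q]} \rangle$ directly.
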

\begin{proof}
For (i) assume on the contrary that $|X \cap Qx|$ is not divisible by $q$.
Consider the $\A$-set $X^{[q]}$ defined in
Proposition~\ref{SW}(ii). Then $x^q \in X^{[q]}$, hence
$\langle X^{[q]} \rangle=\langle x \rangle$ or $P$,
contradicting that none of these subgroups are
$\A$-subgroups.

If $X$ also contains an element of order $p$, then it is $\P_q(G)$-invariant and
(ii) follows from this and (i) in the same way as in the proof of Lemma~\ref{trivial}.
\end{proof}

\begin{lem}\label{X-new}
Suppose that $\underline{P} \notin \A$, $\underline{Q} \in \A$, and there exists a subgroup
$U \le G$ of order $pq^2$ such that $\underline{U} \in \A$.
Let $X \in \cS(\A)$ be a basic set with the following properties:
there exist an element $x \in P \setminus U$ and a subgroup $R \leqslant Q$ of order $q$
such that $X \subseteq Ux$, $X \cap Qx=Rx$, $X \ne Rx$, $X \ne (U \cap P)Rx$. Then $\A$ is CI.
\end{lem}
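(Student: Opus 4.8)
The plan is to pin down the quotient $\A_{G/Q}$ first, then read off the shape of $X$ on the $Q$-cosets lying over the ``line'' $U_px$, and finally produce from $X$ either an $\A$-subgroup that exhibits $\A$ as a generalised wreath (or star) product with CI-factors, or a partial congruence partition to which Lemma~\ref{PCP} applies. Set $U_p:=U\cap P$. Since $Q$ is the unique Sylow $q$-subgroup of $G$ and $|U|=pq^2$, we have $U=U_p\times Q$ with $|U_p|=p$, and $\langle U_p,x\rangle=P$ because $x\in P\setminus U$. As $e\in R$, the basic set $X$ contains $x$ (of order $p$) together with the elements $rx$ of order $pq$ for $r\in R^{\#}$; hence $\langle x\rangle$ is not an $\A$-subgroup and Lemma~\ref{X} is available for $X$. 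Moreover $x^{\pi_m}=x$ for every $\pi_m\in\P_q(G)$, so $X^{\pi_m}=X$ by Proposition~\ref{SW}(i) and $X$ is $\P_q(G)$-invariant. By Proposition~\ref{KM2}, $\A_{G/Q}$ is a $p$-S-ring over $G/Q\cong C_p^2$, so by \eqref{eq:p2-p} it equals $\Z C_p^2$ or $\Z C_p\wr\Z C_p$. In the first case the basic sets of $\A_{G/Q}$ are singletons, which forces $A_e$ to fix every $Q$-coset and hence every basic set of $\A$ to lie in a single $Q$-coset; this gives $X=X\cap Qx=Rx$, contrary to hypothesis. Therefore $\A_{G/Q}\cong\Z C_p\wr\Z C_p$, and its unique $\A_{G/Q}$-subgroup of order $p$ is $U/Q$.

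Writing $U_px=\{x,ux,\dots,u^{p-1}x\}$ and $X\cap Q(u^ix)=S_i\,(u^ix)$, the $\P_q(G)$-invariance of $X$ together with Lemma~\ref{X} shows that each $S_i$ is $\emptyset$, $Q$, or $R_i$ or $Q\setminus R_i$ for a subgroup $R_i\leqslant Q$ of order $q$ (with a single possible exceptional coset to be handled separately), and $S_0=R$. The two excluded profiles $X=Rx$ and $X=(U\cap P)Rx$ say that the $S_i$ are neither all empty for $i\ne 0$ nor all equal to $R$. I would then split into two branches. If every non-empty $S_i$ is $R$, $Q\setminus R$ or $Q$ for the \emph{same} subgroup $R$, then $RS_i=S_i$ for all $i$, so $R\leqslant\rad(X)$; combined with $\rad(X)\cap Q\leqslant R$ (which follows from $X\cap Qx=Rx$) this makes $R=\rad(X)\cap Q$ an $\A$-subgroup of order $q$. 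In this branch I would verify that $R$ is the kernel of a non-trivial decomposition $\A=\A_U\wr_{S}\A_{G/R}$ with $S=U/R\cong C_{pq}$, whose factors live over $U\cong C_p\times C_q^2$ and $G/R\cong C_p^2\times C_q$ and are therefore CI by Theorem~\ref{main1}; since the cyclotomic structure of $\A_S$ over $C_{pq}$ is controlled by Lemma~\ref{mix}, the CI-property of $\A$ then follows from Proposition~\ref{KR}, the required factorisation of $\aut_S(\A_S)$ being supplied by Lemma~\ref{ci-gwp}, Corollary~\ref{ci-wp} or Proposition~\ref{ci-caymin}. If the $R_i$ are not all equal, I would instead extract, from the operation $X^{[q,k]}$ of Proposition~\ref{SW}(ii) and from the support of $\underline{X}\,\underline{X^{-1}}$, a family of order-$pq$ subgroups of $G$ that meet pairwise in $\{e\}$ and whose union of non-identity elements is an $\A$-set; this is a $(pq,k)$-PCP, and Lemma~\ref{PCP} then yields that $\A$ is CI.

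The main obstacle is the second branch: I must control precisely which subgroups $R_i$ can occur for a genuine basic set, handle the ``complement'' profiles $S_i=Q\setminus R_i$ (where $S_iS_i^{-1}=Q$ destroys the naive radical), and prove that the subgroups I obtain really are $\A$-subgroups forming an honest PCP. As in the proof of Theorem~\ref{primrat}, I expect this to hinge on comparing the coefficients $c_g$ of two elements $g$ belonging to the same basic set in the expansion $\underline{X}\,\underline{X^{-1}}=\sum_{g\in G}c_g\,g$; such a comparison should simultaneously pin down the admissible profiles $(S_0,\dots,S_{p-1})$ and eliminate the inconsistent ones, while the two configurations removed by the hypotheses $X\ne Rx$ and $X\ne(U\cap P)Rx$ are exactly the degenerate wreath cases treated elsewhere. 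A secondary technical point, in the first branch, is the verification of the hypothesis of Proposition~\ref{KR}, namely the factorisation of $\aut_S(\A_S)$, which I would reduce either to a tensor factorisation of $\A_{G/R}$ as in Lemma~\ref{ci-gwp}, or to the Cayley-minimality of a cyclotomic $\A_S$ as in Proposition~\ref{ci-caymin} and Lemma~\ref{mix}.
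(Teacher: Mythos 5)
Your opening step (ruling out $\A_{G/Q}\cong\Z C_p^2$, forcing $\A_{G/Q}\cong\Z C_p\wr\Z C_p$) and the goal of your second branch (produce a partial congruence partition and invoke Lemma~\ref{PCP}) coincide with the paper's proof, but there is a genuine gap exactly at the point you flag as the main obstacle, and the tool you propose cannot close it. For the family $H_i=R_i\langle xy^i\rangle$ to be a $(pq,p)$-PCP the subgroups $R_i$ must be \emph{pairwise distinct}: if $R_i=R_j$ for some $i\ne j$, then $H_i\cap H_j\supseteq R_i\ne\{e\}$, so ``not all equal'' is far from enough. The paper proves distinctness as follows: pick $1\leqslant k\leqslant p-1$ with $X^{[p,k]}\ne\emptyset$ (possible since $X\ne U_pRx$), write $\underline{X}\cdot\underline{X^{[p,k]}}=\sum_{g}c_g g$, and compute that for a generator $u_i$ of $R_i$ the coefficient $c_{u_ixy^i}$ is $q-1$ or $0$ according as $R_i$ does or does not occur with multiplicity $k$ among $R_0,\ldots,R_{p-1}$; since all the points $u_ixy^i$ lie in the \emph{single basic set} $X$, these coefficients are equal, so every $R_i$ has multiplicity exactly $k$, whence $k\mid p$ and $k=1$. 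Your substitute $\underline{X}\,\underline{X^{-1}}$ cannot play this role: writing $P=\langle x\rangle\times U_p$, every element of $X$ has $\langle x\rangle$-coordinate $x$, so for $g\in X$ the set $Xg$ lies in the cosets with coordinate $x^2$ and $c_g=|X\cap Xg|=0$ identically, i.e.\ constancy on $X$ carries no information; and comparing its coefficients at elements of $Q$ merely reproduces the $\A$-sets $\bigcup\{R_i^\#:R_i~\text{of multiplicity}~k\}$ that Proposition~\ref{SW}(ii) already gives. (Note also that the relevant Wielandt operation is $X^{[p,k]}$, which lands in $Q$; your $X^{[q,k]}$ is empty here because $|X\cap x'Q|\equiv 0 \pmod q$ for all $x'\in X$.) The key missing idea is to multiply $\underline{X}$ by an $\A$-subset of $Q$ and evaluate the result back at points of $X$.

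Your first branch has a second gap. From $R\leqslant\rad(X)$ and $\underline{Q}\in\A$ you do get that $R=\rad(X)\cap Q$ is an $\A$-subgroup, but the jump to $\A=\A_U\wr_{U/R}\A_{G/R}$ is invalid: the generalised wreath condition requires \emph{every} basic set outside $U$ to be a union of $R$-cosets, and containment of $R$ in the radical of the one basic set $X$ says nothing about the others (the lemma does not assume $\A$ indecomposable, so nothing like Eq.~\eqref{eq:rad} is available). In the paper this branch never produces a decomposition: one first shows $|X\cap Qxy^i|=q$ for \emph{every} $i$ (from $\A_{G/Q}\cong\Z C_p\wr\Z C_p$ together with the constancy of $|X\cap Qg|$ as $g$ runs over the basic set $X$ --- a step you skip), so a complement $(Q\setminus R_i)xy^i$ can occur only when $q=2$, and then this configuration is shown to be \emph{impossible}: for $p>3$, Lemma~\ref{sylow} gives $\underline{U_p}\in\A$ and counting $|X\cap U_pxu|$ forces $|X|\in\{3l,4l\}$, contradicting $|X|=2p$; for $p=3$, i.e.\ $|G|=36$, the paper appeals to Reichard's database \cite{R}. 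You have no argument covering this case, and none of Lemma~\ref{ci-gwp}, Corollary~\ref{ci-wp}, Proposition~\ref{ci-caymin} is known to apply to your hypothetical $U/R$-wreath product. Finally, a small point: applying Lemma~\ref{X} on the cosets $Qxy^i$, $i\ne 0$, needs $\underline{\langle xy^i\rangle}\notin\A$; the paper secures this for all $i$ at once via Lemma~\ref{ci-complementary} (since $G=\langle xy^i\rangle\times U$, otherwise $\A$ is CI and we are done), whereas your direct argument covers only $i=0$.
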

\begin{proof}
Due to Proposition~\ref{KM2} and \eqref{eq:p2-p}, $\A_{G/Q} \cong \Z C_p^2$ or
$\Z C_p\wr \Z C_p$.
Since $X  \ne Rx$, the former case is impossible, hence $|X \cap Qxy^i|=q$ for every $0\leqslant i \leqslant p-1$, where $y$ is a  generator of $U_p=U \cap P$.
Notice that $\underline{\langle x  \rangle} \notin \A$ by Lemma \ref{ci-complementary}, and hence
Lemma~\ref{X}(ii) can be applied to $X$.
Thus for each $0\leqslant i \leqslant p-1$,
$$
X \cap Qxy^i=R_ixy^i~\text{or}~(Q \setminus R_i)xy^i,~R_i \leqslant Q~
\text{and}~|R_i|=q.
$$
The subgroup $R_0=R$ and if
$X \cap Qxy^i=(Q \setminus R_i)xy^i$ for some $i$, then $q=2$ must hold.
\medskip

\noindent{\bf Case~1.}
For each $0 \leqslant i \leqslant p-1$, $X \cap Qxy^i=R_ixy^i$.
\medskip

 Notice that, $X^{[p,k]} \cup \{e\}$ is just the union
of those subgroups $R_i$ that occur exactly $k$ times in the union
$$
X=R_0x \cup R_1xy \cup \cdots \cup R_{p-1}xy^{p-1}.
$$
Since $X \ne (U_p)Rx$, it follows that there exists an integer
$1 \leqslant k \leqslant p-1$ such that $X^{[p,k]}$ is non-empty $\mathcal{A}$-set,
see Proposition~\ref{SW}(ii). Hence $X^{[p,k]} \cup \{e\}=\bigcup_{i=1}^{r}S_r$ with
$\{S_1,\dots,S_r\} \subseteq \{R_0,\dots,R_{p-1}\}$.
By Proposition~\ref{SW}(ii), this is an $\A$-set.
Write $\underline{X} \cdot \underline{X^{[p,k]}}=\sum_{g\in G} c_g g$
and fix a generator $u_i$ of $R_i$ for each $0 \leqslant i \leqslant p-1$.
A direct computation shows that $c_{u_ixy^i}=q-1$ or $0$ depending on whether
$R_i \in \{S_1,\ldots,S_r\}$ or not. On the other hand,
all of the coefficients $c_{u_ixy^i}$ must be the same, and therefore,
$\{S_1,\ldots,S_r\}=\{R_0,\ldots,R_{p-1}\}$.
This means each $R_i$ occurs $k$ times, in particular, $k$ divides $p$.
It follows from $1 \le k \le p-1$ that
$k=1$, hence the subgroups $R_0,\dots,R_{p-1}$ are pairwise distinct.
Let $H_i=R_i \langle xy^i\rangle$,
It is easy to see that the subgroups $H_i$, $0 \leqslant i \leqslant p-1$ form
a $(pq,p)$-PCP of $G$. Since
$\trX \cup X^{[p]}=H_0^\# \cup \cdots \cup H_p^{\#}$ is an $\mathcal{A}$-set,
Lemma~\ref{PCP} gives that $\A$ is CI.
\medskip

\noindent{\bf Case~2.}
$q=2$ and $X \cap Qxy^i=(Q   \setminus R_i)xy^i$ for some
$1 \leqslant i \leqslant p-1$.
\medskip

We show that this case cannot occur.
Assume that $p > 3$.
Applying Lemma~\ref{sylow} for $L=\{e\}$ and $U$ yields
$\underline{U_p} \in \A$.
Let $l=|X \cap U_px|$. As $x\in X$ but $xy^i\notin X$, we have $1\leqslant l<p$.
Now the coefficient of $x$ in $\underline{X}\underline{U_p}$ is equal to $l$, hence
for any $u\in Q$ for which $X \cap (U_p)x u$ is not empty, we get $|X \cap U_px u|=l$.
Using also that $X \ne U_pRx$,
it follows from this that $|X|=3l$ or $4l$ since $|Q|=4$. This contradicts that $|X|=2p$ and $p > 3$.

If $p=3$, i.e., $|G|=36$, then the database of S-rings over groups of small order given  in~\cite{R} shows that $\A$ does not exist.
\end{proof}

We focus first on the case when $\A$ is decomposable.
%--------------------------------------------------------------------------------------------------------------%
\subsection{$\A$ is decomposable}
Let $\A$ be a non-trivial $S=U/L$-wreath product.
Since $\{e\} < L \leqslant U<G$, we have
$1 \leq \Omega(|L|)\leq \Omega(|U|)\leq 3$. If $\Omega(|U|)=\Omega(|L|)$, then
$U=L$ and $\A$ is CI by Corollary~\ref{ci-wp}.
So it remains to consider the following cases:
$$
(\Omega(|U|),\Omega(|L|)) \in \{ (3,2), (2,1), (3,1) \}.
$$

\noindent{\bf Case 1.} $(\Omega(|U|),\Omega(|L|))=(3,2)$.
\medskip

In this case $\Omega(|S|)=1$, hence $\A_S$ is Cayley minimal by Lemma~\ref{mix}(i) and (iii). We may assume w.l.o.g.~that
$$
|G/L|=pq~\text{or}~|G/L|=p^2.
$$

Let $|G/L|=pq$. As $\underline{S} \in \A_{G/L}$, $\rk(\A_{G/L}) \ne 2$.
According to Lemma~\ref{mix}(ii) $\A_{G/L}$ is cyclotomic or a non-trivial
wreath product of two S-rings.  We claim that $\A$ is CI in both cases.
Indeed, this follows by Proposition~\ref{ci-caymin} if $\A_{G/L}$ is cyclotomic.
If $\A_{G/L}$ is a non-trivial wreath product, then
$\A_{G/L}=\A_S \wr \A_{(G/L)/S}$. This implies that $\A=\A_U \wr \A_{G/U}$,
and so $\A$ is CI by Corollary~\ref{ci-wp}.

Now, suppose that $|G/L|=p^2$. By Proposition~\ref{KM2},  $\A_{G/L}$ is a $p$-S-ring,
in particular, $\A_S=\Z S$, and so $\A$ is CI by Corollary~\ref{ci-wp}.
\medskip

\noindent{\bf Case 2.} $(\Omega(|U|),\Omega(|L|))=(2,1)$.
\medskip

Again, $\Omega(|S|)=1$ and $\A_S$ is Cayley minimal.
We may assume w.l.o.g~that $|L|=p$. Then
$$
|U|=pq~\text{or}~|U|=p^2.
$$

Let $|U|=pq$. As $\underline{L} \in \A$, $\rk(\A_U) \ne 2$.
It follows from Lemma~\ref{mix}(ii) that $\A_U$ is cyclotomic or a non-trivial
wreath product of two S-rings. In the former case $\A$ is CI by
Proposition~\ref{ci-caymin}.
In the latter case $\A_U=\A_L\wr \A_{U/L}$, which implies that
$\A=\A_L \wr \A_{G/L}$, and so $\A$ is CI by Corollary~\ref{ci-wp}.

Now, suppose that $|U|=p^2$, i.e., $U=P$. Note that, $S$ is an $\A_{G/L}$-subgroup of
order~$p$. Denote the maximal $q$-$\A_{G/L}$-subgroup of $G/L$ by $H$.
Clearly, $|H| \in \{1,q,q^2\}$.

If $|H|=q^2$, then $\underline{LQ} \in \A$. By
Proposition~\ref{KM2}, $\A_{G/(LQ)}\cong \Z C_p$. This implies that
$\A_S=\Z S$, and so $\A$ is CI by Corollary~\ref{ci-wp}.

Let $|H| \in \{1,q\}$. By Proposition~\ref{MS1}, $\A_{G/L}$ is a
non-trivial $(HS)/S$-wreath product. This implies that $\A$ is the
$(HS)^{(\pi_{G/L})^{-1}}/U$-wreath product.
One can see that $(\Omega(|(HS)^{(\pi_{G/L})^{-1}}|),$ $\Omega(|U|))=(2,2)$ or
$(3,2)$, and hence we are done by Corollary~\ref{ci-wp} or by Case~1, respectively.
\medskip

\noindent{\bf Case 3.} $(\Omega(|U|),\Omega(|L|))=(3,1)$.
\medskip

In this case $\Omega(|S|)=2$. We may assume w.l.o.g.~that $|U|=p^2q$.
By Proposition~\ref{KM2}, $\A_{G/U} \cong \Z C_q$, and
this implies that
\begin{equation}\label{eq:GLS}
(\A_{G/L})_{(G/L)/S} \cong \Z C_q.
\end{equation}

Clearly, $|L| \in \{p,q\}$. Let $|L|=q$.
Then $|S|=p^2$.  Using this, Eq.~\eqref{eq:GLS} and Proposition~\ref{MS2}, we
find that $\A_{G/L}=\A_S \star \A_{Q_1}$, where $Q_1$ is the least
$\A_{G/L}$-subgroup of $G/L$ of order divisible by $q$.
If $|S\cap Q_1|=1$, then $\A_{G/L}=\A_S \otimes \A_{Q_1}$.
Then $\A$ is CI by Lemma~\ref{ci-gwp}.
If $|S\cap Q_1|>1$ then $\A_{G/L}$ is the non-trivial $S/(S\cap Q_1)$-wreath product.
Thus $\A$ is the $U/(S\cap Q_1)^{(\pi_{G/L})^{-1}}$-wreath product.
One can see that
$(\Omega(|U|),\Omega(|(S\cap Q_1)^{(\pi_{G/L})^{-1}}|))=(3,3)$ or $(3,2)$, and hence we are done by Corollary~\ref{ci-wp} or by Case~1.

Now, suppose that $|L|=p$.
Then $|G/L|=pq^2$ and $|S|=pq$.
Denote by $H$ the unique maximal $q$-$\A_{G/L}$-subgroup of $G/L$, and
by $P_1$ the least $\A_{G/L}$-subgroup of order divisible by $p$.
Obviously, $|H|\in\{1,q,q^2\}$.

Let $|H| \in \{1,q\}$. Assume that $H \nleqslant S$. Then $|H|=q$ and
$|H\cap S|=1$. So $G/L=H\times S$ and Eq.~\eqref{eq:GLS} implies that
$\A_H \cong  \Z C_q$. Then $\A_{G/L}=\A_S \otimes \A_H$ by Proposition~\ref{EKP},
and hence $\A$ is CI by Lemma~\ref{ci-gwp}.
Now, let $H \leqslant S$.  Since $|S|=pq$, $P_1 \leqslant S$.
By Proposition~\ref{MS1},  $\A_{G/L}$ is the $S/P_1$-wreath product.
Thus $\A$ is the $U/P_1^{(\pi_{G/L})^{-1}}$-wreath product. We are done by 
Corollary~\ref{ci-wp} or Case~1
because $(\Omega(|U|),\Omega(|P_1^{(\pi_{G/L})^{-1}}|))=(3,3)$ or $(3,2)$.

Let $|H|=q^2$, $V=LQ$ and $K=H \cap S$.
Then $\underline{V} \in \A$ and
$\underline{K} \in \A_{G/L}$.
By Proposition~\ref{KM2}, $\A_{G/V} \cong \Z C_p$, implying that,
\begin{equation}\label{eq:GLH}
(\A_{G/L})_{(G/L)/H} \cong \Z C_p.
\end{equation}

Assume for the moment that $H$ contains an $\A_H$-subgroup $K^\prime$ of order
$q$ such that $K^\prime \ne K$.
It follows from Eq.~\eqref{eq:GLS} that $(\A_{G/L})_{H/K} \cong \Z C_q$,
this implies that $\A_{K^\prime}=\Z K^\prime$.
Then $\A_{G/L}=\A_S \otimes \A_{K^\prime}$ by
Proposition~\ref{EKP}, and so $\A$ is CI by Lemma~\ref{ci-gwp}.

From now on $K$ is assumed to be the only $\A$-subgroup of $H$ of order
$q$. The S-ring $\A_{G/L}=\A_H \star \A_{P_1}$  by Proposition~\ref{MS2} and
Eq.~\eqref{eq:GLH}.

Let $|H \cap P_1|=1$ then $\underline{P} \in \A$, and so
$\A_{G/P}$ is a $q$-S-ring by Proposition~\ref{KM2}.
This implies that $\A_K=\Z K$. On the other hand, $\A_ {P_1}=\Z P_1$
follows from Eq.~\eqref{eq:GLH}, and we conclude that $\A_S=\A_{P_1K}=\Z S$,
and so that $\A$ is CI by Corollary~\ref{ci-wp}.

Now, suppose that $|H\cap P_1|>1$. Then $P_1=S$ and it follows that
$\A_{G/L}$ is the $H/K$-wreath product.
By Proposition~\ref{DW}, $\A_H=\A_K \wr \A_{H/K}$, and
we conclude that $\A_{G/L}=\A_K \wr \A_{(G/L)/K}$.
Thus $\A$ is the $U/K^{(\pi_{G/L})^{-1}}$-wreath product, and we are done by Case~1 because
$(\Omega(|U|),\Omega(|K^{(\pi_{G/L})^{-1}}|))=(3,2)$.
By this we have considered all cases and shown that $\A$ is CI.
%--------------------------------------------------------------------------------------------------------------%
\subsection{$\A$ is indecomposable}
Assume first that $\A$ is primitive. If $\rk(\A)=2$, then $\A$ is CI, hence let
$\rk(\A) > 2$. If $p$ or $q$ is equal to $2$ and $|G| > 36$, then by
Lemma~\ref{sylow} (choose $L=\{e\}$ and $U=G$),
$\underline{P} \in \A$ if $q=2$ and $\underline{Q} \in \A$ if
$p=2$. Hence either $p, q > 2$ or $|G|=36$.  Since $\rk(\A) > 2$, $\rk(\trA) > 2$
as well, see Lemma~\ref{rank2}.  Also, $\trA$ is primitive and as
$\trA=\A \cap W(G)$, it is also schurian.
Due to Theorem~\ref{primrat} and Proposition~\ref{36},
there exists a basic set of $\trA$ of the form
$H_1^\# \cup \cdots \cup H_k^\#$, where $H_1,\ldots,H_k$ are subgroups of $G$ and
form an $(pq,k)$-PCP of $G$. By Lemma~\ref{PCP}, each $H_i$ is an $\A$-subgroup,
a contradiction.
\medskip

Now let $\A$ be imprimitive and $U$ be a proper $\A$-subgroup of maximal order.
\medskip

\noindent{\bf Claim.} $|G/U|$ is a prime.
\medskip

If $|G/U|$ is a prime power, then $\A_{G/U}$ is a primitive $p$-S-ring, implying that
$|G/U|$ is a prime.
If $|G/U|=pq$, then by Lemma~\ref{sylow}, $UR$ is an $\A$-subgroup, where
$R \in \{P,Q\}$ is a Sylow $\max(p,q)$-subgroup of $G$. This contradicts the
maximality of $U$.

Thus we have to consider a unique case: $U$ is of prime order. W.l.o.g. $|U|=q$.
Suppose there exists a proper $\A$-subgroup $V \ne U$. Then
$U \cap V=1$ and $UV=G$, and hence $|V|=|G|/|U|>|U|$, contrary to the maximal choice of $U$. Thus we assume that $U$ is the unique non-trivial proper $\A$-subgroup.
The quotient $\A_{G/U}$ is a primitive S-ring over an abelian group of order $p^2q$.
By Proposition~\ref{W} it has rank $2$.
Therefore $TU=G \setminus U$ holds for each basic set $T$ outside
$G \setminus U$. It follows from Lemma~\ref{centre} that for each $u \in U$ the singleton $\{u\}$ is a basic set of $\A$. Therefore $Tu$ is a basic set of $\A$.
Thus either $U \leqslant \rad(T)$ or $T \cap Tu = \emptyset$ for each $u \in U^\#$.
In the first case $\A$ is the wreath product $\A_U \wr \A_{G/U}$, contradicting that
$\A$ is indecomposable.
In the second case, $T,\, Tu,\, \ldots,\, Tu^{q-1}$ are the only basic sets outside $U$.
Therefore $|T| = p^2q-1$.

If $T \cap Q = \emptyset$, then $Tu \cap  Q = \emptyset$ for all $u \in U$. Therefore, every basic set $Tu$ contains $q$-elements.
It follows from $T \cup Tu \cup \cdots \cup Tu^{q-1}=G \setminus U$ that at least one
of the sets $Tu^i$ contains some $p$-element. W.l.o.g.~$T \cap P \neq \emptyset$. Combining this with $T \cap Q \neq \emptyset$ we conclude that $T$ is rational.
Write
$$
T=T_0 \cup P_1^\# S_1 \cup \cdots \cup P_{p+1}^\# S_{p+1},
$$
where $P_1,\ldots,P_{p+1}$ are all subgroups of $P$ of order $p$,
$T_0:=T \cap Q$ and $S_i \subseteq Q$, $1 \leqslant i \leqslant p+1$ are rational sets.
It follows from
$TU = G \setminus U$ that each $S_i$ is a transversal of $Q/U$.
Therefore $|S_i|=q$ and $S_i$ is a subgroup of
$Q$ of order $q$ such that $S_i \neq U$. Now, it follows from
$|T| = |T_0|+(p+1)(p-1)q$ that
$|T_0| = q-1$. Since $T_0$ is rational, the subset $S_0:=T_0 \cup \{e\}$ is a subgroup of $Q$ distinct from $U$. Note that the subgroups $S_i < Q$, $0 \leqslant i \leqslant p+1$ are not necessarily distinct.  Let us choose the indices so that
$S_0, \ldots, S_k$ is the complete set of distinct subgroups that appear in the list
$S_0, \ldots, S_{p+1}$. Denote by $n_i$ the multiplicity with which $S_i$,
$0 \leqslant i \leqslant k$
appears in the above list. Then $\sum_{i=0}^{k}n_i = p+2$.

We finish the proof of the claim by finding a non-trivial proper $\A$-subgroup distinct
from $U$, which was excluded above.
If $0 < i \leqslant k$ and $n_i \ne p$, then the set $S_i^\#$ is contained in $T^{[p]}$. Therefore, if $k \ne 0$ and $(k,n_0,n_1) \ne (1,2,p)$,
then $T^{[p]}$ is a non-empty subset of $Q$, which intersects $U$ trivially, and so
$\langle T^{[p]} \rangle$ is the required $\A$-subgroup.
If $k=0$, then $T^{[p]}=\{e\}$ and $T=(PS_0)^\#$,
showing that $PS_0$ is an $\A$-subgroup of order $p^2q$.
Finally, if $(k,n_0,n_1)=(1,2,p)$, then $T^{[p]}=\{e\}$ and
$T=(P_jS_0)^\# \cup (P\setminus P_j)S_1$ for some $1 < j \leqslant p+1$.
Then $P_j \le \rad(T\cup \{e\}) < G$, hence $\rad(T\cup \{e\})$ is the required
$\A$-subgroup. The claim is proved.
\medskip

Assume w.l.o.g.~that $|U|=pq^2$. For the rest of the proof
fix an element $x \in P \setminus U$.
By Proposition~\ref{KM2}, $\A_{G/U} \cong \Z C_p$, and this
shows that $Ux$ is an $\A$-set. Let
$I$ be the intersection of all subgroups $\rad(X)$, $X \in \cS(\A)$ and
$X \subseteq Ux$.
Then $\underline{I} \in \A$. Furthermore, it follows from Proposition~\ref{SW}(i) that $I \leqslant \rad(X)$ for any basic set $X$ outside $U$, and we find that $\A$ is the $U/I$-wreath product. As $\A$ is indecomposable, $I=\{e\}$. We have shown the following:
\begin{equation}\label{eq:rad}
\bigcap_{X \in \cS(\A) \atop X \subseteq Ux} \rad(X)=\{e\}.
\end{equation}

Notice that $\underline{\langle x^\prime \rangle} \notin \A$ can also be assumed
for each $x^\prime \in P \setminus U$, otherwise $\A$ would be CI by
Lemma~\ref{ci-complementary}.
\medskip

\noindent{\bf Case~1.} $\underline{P} \notin \A$.
\medskip

Let $y$ be a generator of $U_p=U \cap P$. For $0 \leqslant i
\leqslant p-1$, let
$X_i$ be the basic set containing $xy^i$. By Lemma~\ref{X}(ii),
$$
X_i \cap Qxy^i=R_ixy^i
$$
for some non-trivial\ subgroup $R_i \leqslant Q$.
Note that the sets $X_i$ are not necessarily distinct.
In view of Eq.~\eqref{eq:rad}, we may assume w.l.o.g.~that $|R_0|=q$.

Fix $0 \leqslant i \leqslant p-1$. We claim that every basic set
$X \in \cS(\A)$ satisfies
\begin{equation}\label{eq:Ri}
X \cap Qxy^i \ne \emptyset,~
\underline{R_i} \in \A~\text{and}~|R_i|=q \implies R_i \leqslant \rad(X).
\end{equation}
Indeed, $\{u\} \in \cS(\A)$ for every $u \in R_i$ because of
Lemma~\ref{centre}, hence the right multiplications $\rho_G(u)$, $u \in R_i$
map the basic sets of $\A$ having non-empty intersection with $Qxy^i$ to themselves. Lemma~\ref{X}(i) implies that there are at most $q$ such basic sets.
Using this and that $X_iR_i=X_i$, we conclude that $XR_i=X$, i.e., \eqref{eq:Ri} holds.

Assume first that $X_0=U_pR_0x$. Then $X_i=X_0$ and $R_i=R_0$ for each 
$1 \leqslant i \leqslant p-1$.
It follows from Eq.~\eqref{eq:rad} that
$U_p \nleqslant \rad(X)$ for some basic set $X \subset Ux$.
Then $S:=\langle X^{[p]} \rangle$ is a non-trivial $\A$-subgroup contained in
$Q$ and distinct from $R_0$. If $|S|=q$, then
$\{u\} \in \cS(\A)$ for every $u \in S$ because of Lemma~\ref{centre}, and we
find that the sets $X_0 u=U_pR_0xu$, $u \in S$ are the basic sets contained in $Ux$.
This contradicts Eq.~\eqref{eq:rad}.
Let $S=Q$. Then $R_0=\rad(X_0) \cap S$, and so $\underline{R_0} \in \A$.
Then \eqref{eq:Ri} implies that $R_0 \leqslant \rad(X)$ for every basic set 
$X \subset Ux$, a contradiction to Eq.~\eqref{eq:rad}.

Assume second that $X_0 \ne U_pR_0x$ and $\underline{Q} \in \A$.
By Lemma~\ref{X-new}, we may assume that
$X_0=R_0x$. Then $\underline{R_0} \in \A$ and $\A_{G/Q} \cong \Z C_p^2$ by Proposition~\ref{KM2}. Thus for every $0 \leqslant i \leqslant p-1$, $X_i=R_ixy^i$,
and hence $\underline{R_i} \in \A$. It follows from Eq.~\eqref{eq:rad} and the 
implication in \eqref{eq:Ri} that
there exists some $1 \leqslant i \leqslant p-1$ such that $|R_i|=q$ and
$R_i \ne R_0$. But then $\langle X_0 \rangle=\langle R_0x \rangle$ and
$\langle X_i \rangle=\langle R_ixy^i \rangle$
are $\A$-subgroups intersecting trivially, and hence
$\A$ is CI by Lemma~\ref{ci-complementary}.

We are left with the case when $X_0 \ne U_pR_0x$ and $\underline{Q} \notin \A$. We show that $\underline{R_0} \in \A$.
Assume on the contrary that $\underline{R_0} \notin \A$.
As neither $\underline{Q}$ belongs to $\A$,
$X^{[p]}$ contains no element from $R_0^\#$, and hence
$R_0^\#xy^i \subseteq X \cap Qxy^i$ if $1 \leqslant i \leqslant p-1$.
Using also Lemma~\ref{X}, we find that the latter intersection is equal to $R_0xy^i$, $Qxy^i$ or
$(Q \setminus R)xy^i$ for some subgroup $R < Q$ such that $|R|=q$ and $R \ne R_0$.
Furthermore, as $X \ne U_pR_0x$, there exists some $1 \leqslant i \leqslant p-1$
such that one of the last two possibilities holds.
If $X \cap Qxy^i=Qxy^i$, then $(Q \setminus R_0) \subseteq X^{[p]}$, hence
$\langle X^{[p]} \rangle=Q$, a contradiction.
Suppose that $X \cap Qxy^i=(Q \setminus R)x$.
Then $Q \setminus (R \cup R_0) \subseteq X^{[p]}$, which implies that $q=2$.
If $p>3$, then Lemma~\ref{sylow} applied to $\A$, where $|G/\{e\}|=4p^2$,  
we find that $\underline{P} \in \A$, contradicting our assumption
$\underline{P} \notin \A$. If $p=3$,
then $|G|=36$, and it follows from the database of S-rings of small order in \cite{R} that
such an $\A$ cannot exist.

Note that, the above proof also shows that, for any $1 \leqslant i \leqslant p-1$,
$\underline{R_i} \in \A$ whenever
$|R_i|=q$. Now, since $\underline{Q} \notin \A$, $R_0$ is the only non-trivial $\A$-subgroup
contained in $Q$, and therefore, $R_i=R_0$ or $Q$ for each
$1 \leqslant i \leqslant p-1$. Then by \eqref{eq:Ri}, $R_0 \leqslant \rad(X)$ for each 
basic set $X \subseteq Ux$, a contradiction to Eq.~\eqref{eq:rad}.

\medskip

\noindent{\bf Case~2.} $\underline{P} \in \A$.
\medskip

By Proposition~\ref{KM2}, there exists an $\A$-subgroup $V$ of order $p^2q$.
If now $\underline{Q} \notin \A$, then one can copy the argument
used in Case~1 with $V$ and $Q$ playing the role of
$U$ and $P$, respectively and deduce that $\A$ is CI. If $\underline{Q} \in \A$, then  Lemma~\ref{ci-complementary} shows that $\A$ is CI.
%--------------------------------------------------------------------------------------------------------------%

\end{document}